\newtheorem{theorem}{Theorem}
\newtheorem{lemma}{Lemma}[section]
\newtheorem{definition}[lemma]{Definition}
\newtheorem{proposition}[lemma]{Proposition}
\theoremstyle{definition}
\numberwithin{equation}{section}
\newcommand{\N}{\mathbb{N}}
\newcommand{\Z}{\mathbb{Z}}
\renewcommand{\P}{\mathbb{P}}
\newcommand{\E}{\mathbb{E}}
\newcommand{\1}{\mathbf{1}}
\newcommand{\A}{\mathcal{A}}
\newcommand{\eps}{\epsilon}
\renewcommand{\H}{\mathbb{H}}
\DeclareMathOperator{\rad}{rad}
\newcommand{\bdy}{\partial}
\DeclareMathOperator{\dist}{dist}
\DeclareMathOperator{\PP}{\mathbf{P}}
\newcommand{\EE}{\mathbf{E}}
\DeclareMathOperator{\diam}{diam}
\newcommand{\iso}{\mathrm{Iso}}
\newcommand{\noniso}{\mathrm{NonIso}}
\newcommand{\wtiso}{\mathrm{\wt{I}so}}
\newcommand{\wtnoniso}{\mathrm{\wt{N}onIso}}
\newcommand{\wt}{\widehat}
\renewcommand{\A}{\mathcal{A}}
\newcommand{\bdye}{\partial_{\mathrm{exp}}}
\newcommand{\bdyvis}{\partial_{\mathrm{vis}}}
\DeclareMathOperator{\QQ}{\mathbf{Q}}
\DeclareMathOperator{\PPprod}{\mathbf{Q}}
\DeclareMathOperator{\sep}{sep}
\DeclareMathOperator{\Refcon}{\mathrm{Ref}}
\DeclareMathOperator{\lex}{lex}
\newcommand{\cc}{\mathsf{c}}
\DeclareMathOperator{\GG}{\mathsf{G}}
\DeclareMathOperator{\FF}{\mathsf{F}}
\newcommand{\sd}[1]{\setminus^{\hspace{-0.2em} #1}}
\newcommand{\llb}{\llbracket}
\newcommand{\rrb}{\rrbracket}
\DeclareMathOperator{\near}{\mathrm{clust}}
\DeclareMathOperator{\capac}{cap}
\DeclareMathOperator{\Es}{esc}
\newcommand{\etalchar}[1]{$^{#1}$}
\newcommand{\CC}{\mathcal{C}}
\newcommand{\UU}{\mathcal{U}}
\newcommand{\VV}{\mathcal{V}}
\newcommand{\DD}{\mathcal{D}}
\DeclareMathOperator{\Conf}{\mathrm{Conf}}
\DeclareMathOperator{\Clust}{\mathrm{Clust}}
\DeclareMathOperator{\Fin}{\mathrm{Fin}}
\DeclareMathOperator{\Grow}{\mathrm{Grow}}
\DeclareMathOperator{\expo}{\mathrm{expo}}
\newcommand{\scr}{\mathscr}
\title[A Phase Transition in Harmonic Activation and Transport]{Existence of a Phase Transition\\ in Harmonic Activation and Transport}
\author{Jacob Calvert}
\address{Department of Statistics\\
 U.C. Berkeley \\
  367 Evans Hall \\
  Berkeley, CA, 94720-3840 \\
  U.S.A.}
  \email{calvert@berkeley.edu}
\subjclass{60J10, 60G50, 31C20, and 82C41.}
\keywords{Markov chain, harmonic measure, random walk.}
\begin{document}

\begin{abstract}
Harmonic activation and transport (HAT) is a stochastic process that rearranges finite subsets of $\Z^d$, one element at a time. Given a finite set $U \subset \Z^d$ with at least two elements, HAT removes $x$ from $U$ according to the harmonic measure of $x$ in $U$, and then adds $y$ according to the probability that simple random walk from $x$, conditioned to hit the remaining set, steps from $y$ when it first does so. In particular, HAT conserves the number of elements in $U$.

We study the classification of HAT as recurrent or transient, as the dimension $d$ and number of elements $n$ in the initial set vary. In \cite{calvert2021}, it was proved that the stationary distribution of HAT (on sets viewed up to translation) exists when $d = 2$, for every number of elements $n \geq 2$. We prove that HAT exhibits a phase transition in both $d$ and $n$, in the sense that HAT is transient when $d \geq 5$ and $n \geq 4$. 

Remarkably, transience occurs in only one ``way'': The set splits into clusters of two or three elements---but no other number---which then grow steadily, indefinitely separated. We call these clusters dimers and trimers. Underlying this characterization of transience is the fact that, from any set, HAT reaches a set consisting exclusively of dimers and trimers, in a number of steps and with at least a probability which depend on $d$ and $n$ only.
\end{abstract}

\maketitle

\setcounter{tocdepth}{1}
\tableofcontents

\section{Introduction}\label{sec intro}

Harmonic activation and transport (HAT) is a Markov chain that rearranges finite subsets of $\Z^d$ with at least two elements. With each step, an element is removed from the set ({\em activation}) and an element is added to the boundary of what remains ({\em transport}). Activation occurs according to the harmonic measure of the set which, informally, is the hitting probability of random walk ``from infinity.'' Transport occurs according to a certain hitting probability of simple random walk from the activated element.

HAT is interesting in part because of its remarkable behavior, and in part because of its connections to Laplacian growth, programmable matter, and studies of collective behavior. While HAT is not a growth model, it is related by harmonic measure to models of Laplacian growth, like diffusion-limited aggregation (DLA) \cite{witten1981}, which describe the evolution of a variety of physical interfaces \cite{MR3662912}. The value of this connection was demonstrated in \cite{calvert2021}, where HAT inspired a novel estimate of harmonic measure that generalizes a prediction about DLA from the physics literature \cite{lee1988}. By virtue of being a Markov chain that rearranges a finite subset of a graph, HAT is also related to models of programmable matter, like the amoebot model \cite{derakshandeh2014,cannon2016}. The amoebot model was used to design a self-organizing robot swarm that exhibits collective transport of objects \cite{li2021}. This functionality arises from a phase transition in the model's long-term behavior, which suggests that a phase transition in HAT could inspire new functionality for progammable matter. More broadly, models like HAT can be used to explore the possible behaviors of engineered and natural collectives \cite{calvert2023}.

We use the following notation to define harmonic measure. For an integer $i$, we denote $\Z_{\geq i} = \{i, i+1, \dots\}$ and $\N = \Z_{\geq 0}$ in particular. Fix a dimension $d \in \Z_{\geq 1}$. For $x \in \Z^d$, we denote the distribution of random walk $(S_j)_{j \in \N}$ from $S_0 = x$ by $\P_x$. Here and throughout, ``random walk'' refers to simple, symmetric random walk in $\Z^d$. We denote the first time that random walk returns to a set $A \subseteq \Z^d$ by $\tau_A = \inf\{j \geq 1: S_j \in A\}$. We denote the Euclidean norm by $\|{\cdot}\|$.

For finite $A \subset \Z^d$, we define the harmonic measure of $y$ in $A$ as a limit of conditional hitting probabilities of $A$:
\begin{equation}\label{lim him}
\H_A (y) = \lim_{\| x \| \to \infty} \P_x (S_{\tau_A} = y \mid \tau_A < \infty ).
\end{equation}
This limit exists and does not depend on the sequence in $\Z^d$ that is implicit in the notation ``$\|x\| \to \infty$,'' so $\H_A$ is well defined (see, e.g., \cite[Chapter 2]{lawler2013intersections}). 

The state space of HAT is $\Conf_d = \{V \subset \Z^d: 2 \leq |V| < \infty \}$, the collection of $d$-dimensional {\em configurations}, or finite subsets of $\Z^d$ with at least two elements. We denote the HAT configuration at time $t \in \N$ by $U_t$. To obtain $U_{t+1}$ from $U_t$, we first remove an element $X_t$ from $U_t$ according to $\H_{U_t}$. Then, we consider a random walk from $X_t$ that is conditioned to hit $U_t {\setminus} \{X_t\}$. If this random walk steps from $Y_t$ when it first does so, then we add $Y_t$ to form
\[
U_{t+1} = (U_t \setminus \{X_t\}) \cup \{Y_t\}.
\]
Note that, if $X_t = Y_t$, then $U_{t+1} = U_t$. Since $X_t$ cannot be an interior site of $U_t$, $X_t$ and $Y_t$ differ with positive probability.

In other words, given $U_t$, the probability that activation occurs at $x$ and transport occurs to $y$ is
\begin{equation}\label{eq tps0}
p_{U_t} (x,y) = \H_{U_t} (x) \, \P_x ( S_{\tau - 1} = y \mid \tau < \infty ),
\end{equation}
where $\tau$ abbreviates $\tau_{U_t {\setminus} \{x\}}$. 
We refer to the two factors in \eqref{eq tps0} as the activation and transport components of the dynamics.

\begin{definition}[Harmonic activation and transport] HAT is the discrete-time Markov chain $(U_t)_{t \geq 0}$ on the state space $\Conf_d$ with transition probabilities given by
\begin{equation}\label{eq tps}
\PP \left( U_{t+1} = (U_t \setminus \{x\}) \cup \{y\} \bigm\vert U_t \right) = 
\begin{cases}
p_{U_t} (x,y) & x \neq y,\\ 
\sum_{z \in \Z^d} p_{U_t} (z,z) & x = y,
\end{cases}
\end{equation}
for $x, y \in \Z^d$.
\end{definition}

Four key properties of HAT are apparent from its definition. To state them, we denote the diameter of $A \subseteq \Z^d$ by $\diam (A) = \sup_{x,y \in A} \|x-y\|$ and the law of HAT from $V$ (i.e., conditioned on $U_0 = V$) by $\PP_V$.
\begin{enumerate}
\item {\em Conservation of mass}. The number of elements in $U_t$ is fixed by the initial configuration $U_0$. Consequently, every irreducible component of the state space is contained in $\Conf_{d,n} = \{V \subset \Z^d: |V| = n\}$ for some $n \in \Z_{\geq 2}$.
\item {\em Variable connectivity}. If $|U_0| \geq 3$, then $U_t$ eventually reaches a configuration with two or more connected components.
\item {\em Asymmetric behavior of diameter}. The diameter of $U_t$ increases by at most one with each step: for every configuration $V$,
\begin{equation}\label{amlg0}
\PP_V (\diam (U_{t+1}) \leq \diam (U_t) + 1) = 1.
\end{equation}
In contrast, the diameter of $U_t$ can decrease by as much as $\diam (U_t) - 1$ in one step. For example, if $|V| = 2$, then $\PP_V (\diam (U_1) = 1) = 1$.
\item {\em Translation invariance}. The transition probabilities satisfy
\[
\PP_{W} (U_1 = V) = \PP_{W+x} (U_1 = V+x)
\] 
for all $V, W \in \Conf_d$ and $x \in \Z^d$. This motivates the association of each set $A \subseteq \Z^d$ to the equivalence class $\wt A$ consisting of the translates of $A$:
\[ \wt A = \big\{ B \subseteq \Z^d: \exists x \in \Z^d: B = A + x \big\}.\]
For convenience, if $V$ is a configuration, then we will also call $\wt V$ a configuration.
\end{enumerate}

This paper primarily concerns the classification of HAT as recurrent or transient, as the dimension $d$ and the number of elements $n$ in the initial configuration vary. First, note that there are $n$-element configurations that HAT cannot reach (i.e., realize as $U_t$ for some $t \geq 1$). This is because, for $p_{U_t} (x,y)$ to be positive, $y$ must have a neighbor in $U_t {\setminus} \{x\}$ and $y$ must have positive harmonic measure in $(U_t \setminus \{x\}) \cup \{y\}$. It is therefore impossible for all elements with positive harmonic measure in $U_{t+1}$ to be neighborless. This fact motivates the following definition.

\begin{definition}[Isolated, non-isolated configurations]
If $A \subset \Z^d$ is finite and if $x \in A$, then we say that $x$ is exposed in $A$ if $\H_A (x) > 0$. We say that $A$ is isolated if every element that is exposed in $A$ has no neighbors in $A$; we say that $A$ is non-isolated if it is not isolated. We denote by $\iso_{d,n}$ and $\noniso_{d,n}$ the collections of isolated and non-isolated $n$-element configurations in $\Z^d$, respectively. We denote the collections of the corresponding equivalence classes of configurations by $\wtiso_{d,n}$ and $\wtnoniso_{d,n}$.
\end{definition}

It is easy to see that HAT is positive recurrent on $\wtnoniso_{d,n}$, for any dimension $d$, when the number of elements $n$ is two or three. When $n=3$, isolated elements are removed with uniformly positive probability, which prevents a configuration's diameter from steadily growing. This argument does not apply when $n \geq 4$, because the diameter of a configuration can grow without isolating an element. For example, when $n=4$, two pairs of adjacent elements can ``walk'' apart. Nevertheless, it is possible to prove that, in two dimensions, HAT is positive recurrent on the class of non-isolated configurations for every $n \geq 2$.

\begin{theorem}[Positive recurrence in two dimensions; Theorem 1.6 of \cite{calvert2021}]\label{pos rec}
For every $n \geq 2$, from any $n$-element subset of $\Z^2$, HAT converges to a unique probability measure, supported on $\wtnoniso_{2,n}$. In particular, HAT is positive recurrent on $\wtnoniso_{2,n}$ for every $n \geq 2$.
\end{theorem}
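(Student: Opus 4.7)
The plan is to establish positive recurrence via a Foster--Lyapunov drift argument on the equivalence classes in $\wtnoniso_{2,n}$, then upgrade to the convergence statement by verifying irreducibility and aperiodicity of HAT on this class. Aperiodicity is immediate from the $X=Y$ transitions in the definition, which give every state a positive probability of staying fixed in one step. For irreducibility, from any $\wt U \in \wtnoniso_{2,n}$ one should be able to reach any target $\wt W$ with positive probability in finitely many steps; a convenient route is to first reduce any configuration to a canonical one (say, a contiguous rectangular block of $n$ sites), and then rearrange that block into $\wt W$ by successive activations of boundary elements and transports to chosen adjacent sites, each of which has positive probability.

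For the drift argument, take $V(U) = \log(1 + \diam U)$. Since the diameter grows by at most one per step, the increment is bounded above by $\log\bigl(1 + (1 + \diam U_t)^{-1}\bigr)$. The heart of the proof is to produce a contraction event of positive probability, uniform in $U_t$, when $\diam U_t$ is large. Pick an element $x^\star \in U_t$ realizing the diameter. Because simple random walk on $\Z^2$ is recurrent, a discrete Beurling-type estimate gives $\H_{U_t}(x^\star) \geq c(n) > 0$; no configuration of size $n$ can shield an extremal point from harmonic measure in two dimensions. Conditioned on activation at $x^\star$, Lawler-style hitting estimates for planar random walk should show that, with probability at least $c'(n)$, the walk from $x^\star$ conditioned to hit $U_t \setminus \{x^\star\}$ first strikes the complement within a bounded neighborhood of the densest subcluster of $U_t$. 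On this event, $\diam U_{t+1} \leq (1 - \eta(n)) \diam U_t$ for some $\eta(n) > 0$, so $V$ decreases by at least a positive amount $\log (1-\eta(n))^{-1}$.

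Combining the upper bound on the single-step increment with this contraction event yields $\mathbb{E}[V(U_{t+1}) - V(U_t) \mid U_t] \leq -\delta(n) < 0$ whenever $\diam U_t \geq R(n)$ for a suitable $R(n)$. Since only finitely many equivalence classes in $\wtnoniso_{2,n}$ satisfy $\diam \leq R(n)$, and increments of $V$ are uniformly bounded in expectation, Foster's theorem delivers positive recurrence on $\wtnoniso_{2,n}$. Combined with irreducibility and aperiodicity, this gives convergence from any starting point in $\wtnoniso_{2,n}$ to a unique stationary measure; for arbitrary $n$-element initial sets one further observes that after one step the chain necessarily lies in $\wtnoniso_{2,n}$, since any element added by transport has a neighbor in the remaining set and carries positive harmonic measure.

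The most delicate step is the contraction estimate when $U_t$ is highly disconnected: the extremal vertex $x^\star$ may belong to a small outlying cluster, and one must exclude the possibility that the conditioned walk from $x^\star$ merely hops within that cluster rather than traversing the full diameter toward the bulk. Obtaining the estimate uniformly in such multi-cluster geometries, with constants depending only on $n$, is the crux of the argument; it rests on careful use of the logarithmic Green's function in $d=2$ and the fact that every finite subset of $\Z^2$ is hit from infinity with probability one---precisely the two-dimensional feature that will fail in the high-dimensional transience regime treated in the rest of the paper.
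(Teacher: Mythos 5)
The paper does not prove this statement; it is imported verbatim as Theorem 1.6 of \cite{calvert2021}, and the paragraph that follows the statement only sketches the mechanism: a multi-step \emph{collapse} phenomenon, in which the diameter of $U_t$ drops to $O(\log \diam U_t)$ over a window of $\Theta(\log \diam U_t)$ steps with high probability, and this multi-step negative drift is then fed into a Foster--Lyapunov theorem. Your proposal shares the Lyapunov spirit (using $V(U)=\log(1+\diam U)$ and Foster's criterion), but the central estimate---a \emph{one-step} contraction event of probability $\geq c'(n)$ that cuts the diameter by a factor $1-\eta(n)$---is false, and this is exactly the obstacle the multi-step formulation is designed to overcome.

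Concretely, take $U_t=\{(0,0),(1,0),(\rho,0),(\rho+1,0)\}\subset\Z^2$ with $\rho$ large, a ``dimer--dimer'' configuration. Whichever element you activate, the remaining three elements already span a distance $\geq \rho$, so $\diam U_{t+1}\geq\rho$ regardless of where the activated element is transported. The one-step event $\{\diam U_{t+1}\leq(1-\eta)\diam U_t\}$ therefore has probability exactly zero, not $\geq c'(n)$. More generally, even the probability of a long-range transport is only $\Theta(1/\log\rho)$ in two dimensions: by the potential-kernel formula, a walk launched from one dimer endpoint, conditioned to hit the rest of the set, lands on its partner (one lattice step away) with probability $1-\Theta(1/\log\rho)$. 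Consequently the one-step drift of $V$ in such configurations is $O(1/\rho)$ in magnitude, not bounded below by a positive $\delta(n)$, and Foster's criterion applied one step at a time does not close. Note also that you cannot simply replace ``factor contraction in one step'' with ``diameter drops by one per step'': in the dimer--dimer example the typical single step leaves $\diam U_{t}$ unchanged or increases it by one, and only the $\Theta(1/\log\rho)$-rare long-range moves advance the collapse. This is precisely why \cite{calvert2021} observes the chain over $\Theta(\log\diam U_t)$ consecutive steps: many attempts at a $1/\log\diam$-probability event are needed before the ensemble of outlying clusters is absorbed with uniformly positive probability, after which the diameter has fallen to $O(\log\diam U_t)$. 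The rest of your scaffolding (aperiodicity from the positive lazy probability on non-isolated sets, irreducibility via a canonical shape, and the observation that $U_1$ is always non-isolated) is reasonable, but the drift estimate as stated would fail.
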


Theorem \ref{pos rec} is a consequence of a phenomenon called {\em collapse} that HAT exhibits in two dimensions. Informally, collapse occurs when the diameter of a configuration is reduced to its logarithm over a number of steps proportional to this logarithm. When a configuration has a sufficiently large diameter in terms of $n$, collapse occurs with high probability in $n$ \cite[Theorem 1.5]{calvert2021}. In other words, the diameter experiences a negative drift in the sense of a Foster--Lyapunov theorem (e.g., Theorem~2.2.4 of \cite{fayolle1995}), which implies that HAT is positive recurrent.

In the context of Theorem \ref{pos rec}, the first of our main results establishes that HAT exhibits a phase transition, in the sense that HAT is transient in any dimension $d \geq 5$, for every $n \geq 4$.

\begin{theorem}[Transience in high dimensions]\label{trans} HAT is transient for every $d \geq 5$ and $n \geq 4$.
\end{theorem}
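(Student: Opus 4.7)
The argument is organized around the dichotomy hinted at in the abstract: any configuration fragments into dimers and trimers, and these fragments then disperse to infinity.

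\emph{Phase 1: reaching a dimer/trimer state.} I would first establish the central lemma that from any $n$-element configuration $U_0$, there exist $T = T(d,n)$ and $p = p(d,n) > 0$ such that, with probability at least $p$, $U_T$ decomposes as a disjoint union of widely-separated dimers and trimers. The construction is inductive in the cluster sizes present. Using the two basic lower bounds---(i) every exposed element is activated with probability bounded below by a constant depending only on $d,n$, and (ii) a random walk from any site can follow any prescribed bounded-length path with positive probability---I can prescribe activation-transport sequences that detach a dimer or a trimer from any cluster of size $k\geq 4$ and push it a large distance from the remainder. Iterating at most $n$ times reduces every cluster to size in $\{2,3\}$; since $n\geq 4$ admits a decomposition $n=2a+3b$ with $a+b\geq 2$, the target configurations exist. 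Transient singletons created during fragmentation are re-absorbed into a neighboring cluster in $O(1)$ steps because a singleton is exposed and has activation probability bounded away from $0$, and care must be taken to incorporate such re-absorptions into the target decomposition.

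\emph{Phase 2: indefinite separation.} Starting from a configuration of $m=a+b\geq 2$ clusters at pairwise distance at least $R$, I would argue that each cluster performs an effective random walk in $\Z^d$, and that in dimension $d\geq 5$ these walks mutually avoid each other with positive probability. When the clusters are well-separated, the harmonic measure of the full configuration approximately factors across clusters with weights proportional to their capacities, and a walk activated at one cluster, conditioned to hit the remainder, returns to the same cluster with probability $1-O(R^{2-d})$. Over a bounded number of steps, the internal HAT dynamics on a single dimer or trimer therefore reduces to an effective random walk of its center of mass, with step distribution determined by symmetry of the neighbors of the remaining endpoint(s). Classical estimates on non-intersection of two independent transient random walks in $\Z^d$, $d\geq 5$, give that their ranges miss one another with positive probability; a union bound over the $\binom{m}{2}$ pairs, together with the factoring of harmonic measure, shows that all clusters mutually avoid each other with probability bounded below, provided the initial separation $R$ is large enough. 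On this event the pairwise cluster distances tend to infinity, so the equivalence class $\wt{U_t}$ eventually leaves every bounded-diameter set and HAT cannot return infinitely often to any state; combined with Phase 1, this yields transience.

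\emph{Main obstacle.} I expect Phase 1 to be the real difficulty. Phase 2 essentially packages standard capacity estimates and non-intersection of high-dimensional random walks, and the dependence on $d\geq 5$ enters there in a transparent way. Phase 1, by contrast, must produce an explicit fragmentation recipe that works uniformly in the geometry of $U_0$: in particular, for configurations whose exposed surface is a small fraction of their bulk (thick clusters in which harmonic measure concentrates on only a few extremal sites), the prescribed detachment sequences must be driven from those special exposed sites, and the intermediate states must stay non-isolated so that the chain remains well-defined. Designing these sequences so that singletons, if they appear, are guaranteed to re-merge in a way that respects the target $2a+3b$ partition is the main bookkeeping hurdle; the dimension $d$ enters Phase 1 only through uniform lower bounds on walk probabilities, so the construction is essentially dimension-agnostic, and the $d\geq 5$ restriction is genuinely a Phase-2 phenomenon.
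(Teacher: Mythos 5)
Your two-phase outline---fragment into dimers and trimers, then show the fragments disperse---matches the paper's strategy (Theorem~\ref{thm form dot} is your Phase 1, Proposition~\ref{inf xi} is your Phase 2, and they combine through Theorem~\ref{fate} and irreducibility). But there are two concrete gaps.

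\textbf{Phase 2: the mechanism is misidentified.} You ascribe the $d\geq 5$ restriction to non-intersection of independent transient random walks. This is the wrong event and the wrong threshold diagnosis. First, the relevant event is not that two centers of mass never occupy the same site; it is that a walk activated at one cluster and conditioned to hit the remaining configuration does not land on a different cluster (an ``exchange''). This has probability $O(\sep^{2-d})$ per step, and the paper's argument is a multi-scale one: over the $\sim a^2$ steps it takes the separation to double, the cumulative exchange probability is $\sim a^{4-d}$, times a $\log a$ correction coming from the trimers' diameters (property~DOT.\hyperlink{dot3}{3}). Without the log, $d\geq 4$ is marginal and the paper explicitly says transience is merely plausible in $d=4$; it is the trimer correction that forces $d\geq 5$, as the heuristic in Section~\ref{sub heuristic} explains. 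Second, and more seriously, the centers of mass do \emph{not} perform random walks under HAT; they do so only under an idealized comparison chain (IHAT) in which transport is conditioned to be intracluster and activation uses a cluster-localized harmonic measure. The core technical content of Phase~2---Proposition~\ref{pe to qe} and the apparatus of Sections~\ref{sec intra}--\ref{sec clust sep}---is showing that the HAT-to-IHAT approximation error $\prod_t(1-O(\sep_t^{2.1-d}))$ remains bounded away from zero when accumulated over all the doubling windows simultaneously, which only closes precisely when the doubling is fast enough relative to the exchange rate. Replacing this by ``classical estimates on non-intersection'' presupposes that the effective-random-walk picture is exact, which it is not, and begs exactly the question the multi-scale argument answers.

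\textbf{Phase 1: the stated lower bound is unjustified.} You assert (i) that \emph{every} exposed element of an $n$-element set in $\Z^d$ is activated with probability bounded below by a constant depending only on $d,n$. This is precisely the part of Phase 1 that is not free: in the companion two-dimensional work, exposed elements can have exponentially small harmonic measure, and no such uniform bound appears in the paper's toolkit here. What the paper actually uses is a lower bound on harmonic measure at the \emph{lexicographically minimal} element of a well-separated cluster (Lemma~\ref{lex hm}), together with a comparison lemma among exposed elements within a connected cluster (Lemma~\ref{lem hm comp}), and the algorithms (Section~\ref{sec form dot}) are carefully designed to only ever prescribe activations at such distinguished elements. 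You correctly sense that ``thick'' configurations are the hard case, but the resolution---always operate from a lex element, whose activation probability can be bounded via a ray in the $-e_1$ direction combined with a Green's-function escape estimate---is the missing ingredient, not just ``bookkeeping.'' Without identifying which exposed element to drive, the prescribed detachment sequences cannot be shown to have probability bounded below uniformly in $\diam(U_0)$, which is the entire point of Theorem~\ref{thm form dot}.
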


It is unnecessary to qualify that HAT is transient on $\wtnoniso_{d,n}$, because the states of $\wtiso_{d,n}$ are transient for every $d$ and $n$. At the end of this section, we briefly discuss a heuristic which suggests that $d=5$ is the critical dimension for the transience of HAT. Figure \ref{fig: phase_diagram} summarizes what is known about the phase diagram of HAT in the $d$--$n$ grid.

\begin{figure}[htbp]
\centering {\includegraphics[width=\linewidth]{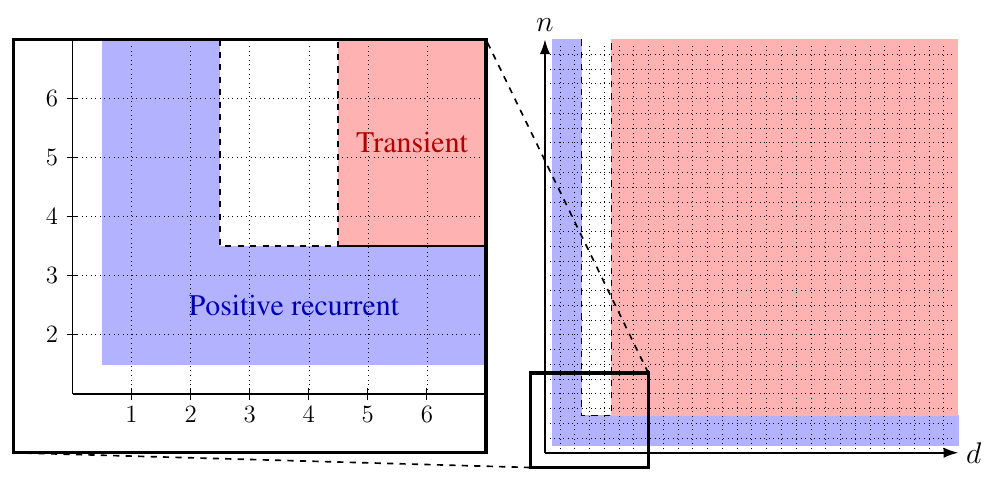}}
\caption{The phase diagram for HAT in the $d$--$n$ grid. HAT is positive recurrent on $\wtnoniso_{d,n}$ in the blue-shaded region and transient in the red-shaded region. The classification of HAT has not been established in the unshaded region.}
\label{fig: phase_diagram}
\end{figure}

Our second main result, Theorem~\ref{fate}, is a detailed description of the way that transience occurs when $d \geq 5$ and $n \geq 4$. Informally, HAT eventually reaches a configuration consisting of ``clusters'' of two or three elements, which grow apart indefinitely and never exchange elements. We state this result in terms of partitions of the HAT configuration. We refer to the parts of these partitions as clusters when the parts have small diameters relative to their separation. The next two definitions make this precise.

By a partition $\VV$ of a configuration $V$, we mean an ordered partition $(\VV^1, \dots, \VV^k)$ of $V$ into $k \geq 2$ nonempty, disjoint subsets. We use the following notation in this context. 
\begin{itemize}
\item For $i \in \{1,\dots,k\}$ and $A \subseteq \Z^d$, we denote by $\VV \cup^i A$ and $\VV \sd{i} A$ the partitions with $i$\textsuperscript{th} parts $\VV^i \cup A$ and $\VV^i \setminus A$, and all other parts equal to those of $\VV$.
\item If $x \in V$, then we denote the unique part of $V$ to which $x$ belongs by $[x]_\VV$. In other words, $x \in \VV^{[x]_\VV}$. 
\item The separation of a partition is the smallest distance between its parts:
\[
\sep (\VV) = \min_{1 \leq i \leq k} \dist (\VV^i, \VV^{\neq i}).
\]
Here, we use $\dist (A,B) = \inf_{x \in A, y \in B} \|x-y\|$ to denote the distance between $A, B \subseteq \Z^d$ and we use $\VV^{\neq i}$ to denote the union $\cup_{j \neq i} \VV^j$.
\end{itemize}

Given a partition of the HAT configuration at one time, there is a natural way to obtain a partition of the configuration at every later time: with each step, assign the transported element to the same part as the activated element.

\begin{definition}[Natural partitioning]\label{nat part}
Given a partition $\VV$ of the time-$s$ configuration $U_s$, the natural partitioning $(\UU_t)_{t \geq s}$ of $(U_t)_{t \geq s}$ with $\VV$ is inductively defined by $\UU_s = \VV$ and  
\begin{equation}\label{c with u}
\UU_{t+1} = \left( \UU_t \sd{i_t} \{X_t\} \right) \cup^{i_t} \{Y_t\}, \quad t \geq s,
\end{equation}
in terms of the time-$t$ sites of activation $X_t$ and transport $Y_t$, and the part $i_t = [X_t]_{\UU_t}$ of $\UU_t$ to which $X_t$ belongs.
\end{definition}

A clustering is a partition such that each part has at least two elements and the parts satisfy bounds on separation, in terms both absolute and relative to their diameter.

\begin{definition}[Clustering]\label{clustering def}
For $a, b > 0$, a partition $\VV = (\VV^1, \dots, \VV^k)$ of a configuration $V$ is an $(a,b)$ clustering of $V$, denoted $\VV \in \Clust_{a,b} (V)$, if
\begin{equation}\label{abs rel sep}
|\VV^i| \geq 2, \quad \dist (\VV^i, \VV^{\neq i}) \geq a, \quad \text{and} \quad \diam (\VV^i) \leq b \log \dist (\VV^i, \VV^{\neq i}), \quad 1 \leq i \leq k.
\end{equation}
We refer to the parts of a clustering as clusters. In particular, we call $\VV^i$ a dimer if $|\VV^i| = 2$ and a trimer if $|\VV^i| = 3$. We say that $\VV$ is an $(a,b)$ {\em dimer-or-timer} (DOT) clustering, denoted $\VV \in \Clust_{a,b}^\bullet (V)$, if $\VV$ satisfies
\begin{equation}\label{two or three elems}
|\VV^i| \in \{2,3\}, \quad 1 \leq i \leq k,
\end{equation}
in addition to \eqref{abs rel sep}.
\end{definition}

Dimers and trimers have a special status in $d \geq 5$ dimensions because they are the only clusters that can ``persist'' over many steps, in a sense that we elaborate at the end of the section. This is counterintuitive because smaller clusters are at greater risk of losing all of their elements to other clusters. However, in $d \geq 3$ dimensions, an activated element would likely escape to infinity in the absence of the conditioning in the transport component of the HAT dynamics \eqref{eq tps0}. Consequently, when clusters are well separated, the HAT dynamics biases the activated element to be transported to the cluster at which it was activated. As $d$ increases, this effect becomes more pronounced, enabling dimers and trimers to persist over many steps despite comprising few elements. In contrast, clusters of four or more elements cannot persist---not because they lose their elements to other, distant clusters---but because they can split into dimers or trimers, which grow apart as they persist.

By viewing dimers and trimers at consecutive return times to a given orientation, we can model their individual motions as $d$-dimensional random walks (albeit not simple ones), which suggests that their separation grows at a rate of roughly $t^{1/2}$. These return times will be exponentially tight and so, because diameter increases at most linearly in time \eqref{amlg0}, the clusters' diameters should never exceed roughly $\log t$. These observations suggest that, after $t$ steps, the clusters should constitute a $(t^{1/2}, 1)$ DOT clustering of the HAT configuration. This is the heuristic behind our second main result.

\begin{theorem}[The mechanism that produces transience]\label{fate}
Let $d \geq 5$ and $n \geq 4$, and let $V \in \Conf_{d,n}$. There exists $b = b(d,n) > 0$ such that, for any $\delta \in (0,\frac12)$, there exist $a = a(d,n,\delta) > 0$ and a $\PP_V$-a.s.\@ finite random time $\theta = \theta (d,n,\delta) \in \N$
 at which there is a clustering $\mathcal{W} \in \Clust_{a,b}^\bullet (U_\theta)$ such that the natural partitioning $(\UU_t)_{t \geq \theta}$ of $(U_t)_{t \geq \theta}$ with $\mathcal{W}$ satisfies 
\begin{equation}\label{dot conds}
\UU_t \in \Clust_{g(t-\theta),b}^\bullet (U_t), \quad t > \theta,
\end{equation}
where $g(s) = s^{\frac12 - \delta}$ for $s \geq 0$.
\end{theorem}

Theorem \ref{fate} identifies an a.s.\ finite random time $\theta$ at which there is a clustering of $U_\theta$ into dimers or trimers and forever after which {\em the same dimers or trimers} become steadily, increasingly separated---in terms both absolute and relative to their diameters. In particular, \eqref{dot conds} is stronger than 
\[
\Clust_{g (t-\theta),b}^\bullet (U_t) \neq \emptyset, \quad t > \theta,
\]
because it rules--out the exchange of elements between clusters after time $\theta$. 
According to the preceding heuristic (which the proof makes precise), a growth rate of $s^{\frac12}$ would be tight.

Theorem~\ref{fate} implies Theorem \ref{trans}, because HAT is irreducible on non-isolated configurations.

\begin{theorem}[Irreducibility]\label{irred}
HAT is irreducible on $\wtnoniso_{d,n}$, for every $d \geq 5$ and $n \geq 2$.
\end{theorem}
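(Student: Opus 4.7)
The plan is to prove that from any non-isolated configuration $U$ of size $n$, one can reach any non-isolated $V$ (up to $\mathcal{G}_d$-equivalence) via a finite sequence of HAT transitions, each of strictly positive probability. The workhorse is a \emph{single-step flexibility lemma}: for any configuration $W$, any exposed element $x \in W$, and any $y \in \Z^d$ adjacent to $W \setminus \{x\}$, the transition probability $p_W(x,y)$ is strictly positive. The activation factor $\H_W(x)$ is positive because $x$ is exposed, and the transport factor $\P_x(S_{\tau - 1} = y \mid \tau < \infty)$ is positive because $\Z^d$ is connected: one can exhibit a specific finite lattice path from $x$ to $y$ avoiding $W \setminus \{x\}$ followed by a step into a neighbor of $y$ in $W \setminus \{x\}$, and this event has positive probability under simple random walk from $x$.

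With the flexibility lemma in hand, I would fix a canonical configuration, say the line $L_n = \{0, e_1, \ldots, (n-1) e_1\}$, and show that every non-isolated configuration is mutually reachable with $L_n$ by a positive-probability sequence of HAT transitions. The strategy is to ``walk'' elements one lattice step at a time along paths that remain adjacent to the persistent part of the configuration. To transform $L_n$ into a target non-isolated $V$, order the elements $v_1, \ldots, v_n$ of a suitably translated copy of $V$ so that each prefix is connected (when $V$ is disconnected, one builds its components in sequence); then successively relocate elements of the current configuration by activating an exposed element and transporting it to a neighboring site along a lattice path toward the target, with each intermediate step justified by the flexibility lemma. The reverse direction, reaching $L_n$ from an arbitrary non-isolated $U$, proceeds analogously: fix a target copy of $L_n$ compatible with $U$, peel off exposed elements of $U$ one at a time, and walk each one onto the next unfilled line position.

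The main obstacle, and the step requiring the most care, is relocating an element to a site far from the persistent configuration, which is especially important when $V$ has several well-separated components. This is handled by growing a lattice ``bridge'' of current elements outward from the persistent configuration one step at a time, temporarily depositing the relocating element at the end of the bridge, and then pruning the bridge by walking its elements back to their destinations; throughout, one must verify that every intermediate configuration has an exposed element available (which follows from the fact that every finite configuration in $\Z^d$ has exposed extreme elements in any coordinate direction) and a valid transport target (ensured by the bridge construction). Concatenating a positive-probability path from $U$ to $L_n$ with one from $L_n$ to a $\mathcal{G}_d$-equivalent of $V$ yields the desired chain of transitions, proving irreducibility of HAT on $\wtnoniso_{d,n}$.
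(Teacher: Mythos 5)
Your high-level plan—reduce irreducibility to mutual reachability with the line $L_n$—matches the paper's decomposition into a ``Set to line'' proposition and a ``Line to set'' proposition. However, there are two substantive gaps in the execution.

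The single-step flexibility lemma is false as stated. You assert that a lattice path from $x$ to $y$ avoiding $W \setminus \{x\}$ exists ``because $\Z^d$ is connected,'' but connectedness of $\Z^d$ does not imply connectedness of $\Z^d \setminus (W \setminus \{x\})$: for any $d \geq 2$, a finite set such as a lattice shell can disconnect $\Z^d$, and $n$ is permitted to be large. Thus $p_W(x,y) > 0$ requires $x$ exposed, $y$ adjacent to $W \setminus \{x\}$, \emph{and} the existence of a path from $x$ to $y$ in the complement of $W \setminus \{x\}$ (equivalently, $y$ must be exposed in $(W \setminus\{x\}) \cup \{y\}$). The paper handles the path-existence issue with Kesten's boundary-connectivity lemma (Lemma \ref{kesten result}, used via Lemma \ref{kesten cor}), together with a careful choice of which element to activate (lex elements, whose exposedness and escape are quantified in Lemmas \ref{lex hm} and \ref{lex hm2}); your sketch asserts path existence without justification.

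The more serious gap concerns the ``bridge'' construction for placing a far-away component of $V$. If that component lies at distance $D \gg n$ from the rest of the configuration, a static bridge of adjacent elements spanning the gap would require on the order of $D$ elements, but only $n$ are available, so ``grow the bridge, deposit, then prune'' cannot reach. The mechanism the paper uses is \emph{treadmilling}: a pair of adjacent elements is advanced one site at a time (activate the rear element, transport it just ahead of the front one), so the pair can traverse arbitrary distances using only two elements. This is the engine of the paper's Proposition \ref{induc arg} and Algorithm $\mathcal{A}_{\ref{alg3}}$. If your ``one step at a time'' grow/prune phrasing is meant to describe treadmilling, it needs to be made explicit; as written it reads as a static bridge, which fails. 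In addition, the paper's Line-to-set direction is an induction on $n$ (Proposition \ref{induc arg}) that must contend with the possibility that removing any exposed non-isolated element of the target $U$ leaves an isolated $(n-1)$-element set to which the induction hypothesis does not apply; Case 2 of that proof builds a related set first and repairs it afterward. Your sketch does not address this obstruction, so the induction you implicitly rely on (``build its components in sequence'') has an unverified step.
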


In fact, there is no barrier to establishing irreducibility for other values of $d$; we simply assume that $d \geq 5$ to facilitate the reuse of inputs to the other theorems.

\begin{proof}[Proof of Theorem \ref{trans}]
Let $d \geq 5$ and $n \geq 4$, and let $V$ be the $n$-element segment $\{(j,0,\dots,0): 1 \leq j \leq n\} \subset \Z^d$. 
It suffices to show that $\wt V$ is transient, since $\wt V \in \wtnoniso_{d,n}$ and since HAT is irreducible on $\wtnoniso_{d,n}$ by Theorem~\ref{irred}. Theorem~\ref{fate} implies that there is a $\PP_V$-a.s.\@ finite time $\theta$ such that $\diam (U_t) \geq n$ for $t \geq \theta$. Because $\diam (\wt V) = n - 1 < n$, this implies that there are $\PP_V$-a.s.\ finitely many returns to $\wt V$, hence $\wt V$ is transient.
\end{proof}

A key input to the proof of Theorem \ref{fate} is the fact that HAT reaches a configuration with an $(a,1)$ DOT clustering in a number of steps $N$ and with a probability of at least $p$ that depend only on $a$, $d$, and $n$. A conceptually minor but useful fact is that this clustering consists of line segments parallel to $e_1 = (1,0,\dots,0) \in \Z^d$. Specifically, we define $\Refcon$ to be the collection of reference dimers and trimers
\[
\Refcon = \left\{ \{x,x+e_1\}: x \in \Z^d\right\} \cup \left\{\{x, x+e_1, x+2e_1\}: x \in \Z^d\right\}
\]
and denote the collection of tuples of such configurations by $\Refcon^\times$.

\begin{theorem}[Formation of dimers and trimers]\label{thm form dot} Fix $d \geq 5$ and $n \geq 4$. For every $a > 0$, there exist $N = N (a,d,n) \in \N$ and $p = p(a,d,n) \in (0,1]$ such that, for any $V \in \Conf_{d,n}$,
\begin{equation}\label{thm eq form dot}
\PP_{V} \left( \Clust_{a,1}^\bullet (U_N) \cap \Refcon^\times \neq \emptyset \right) \geq p.
\end{equation}
\end{theorem}

The important feature of Theorem \ref{thm form dot} is that $N$ and $p$ do not depend on the diameter of $V$.  The proof takes the form of an analysis of three algorithms, which sequentially: (i) rearrange the configuration into well separated, connected clusters with at least two elements each; (ii) organize each cluster into a line segment; and (iii) split the segments into dimers and trimers. Collectively, the algorithms take as input an arbitrary configuration $V$ and desired separation $a$, and return a configuration $W$ that has an $(a,1)$ DOT clustering. It does not seem possible to appreciably simplify this process without introducing into $p$ a dependence on the diameter of $V$.

\subsection*{A heuristic which suggests that $d=5$ is the critical dimension for transience}\label{sub heuristic}

We address the role of the assumption that $d \geq 5$ in Theorem~\ref{trans} with a discussion of a heuristic. Consider a pair of dimers. Until they exchange elements, we can model the distance between them by the norm of a $d$-dimensional random walk. If they never exchange elements, then, because random walk is transient in $d \geq 3$ dimensions, their separation will grow steadily and without bound, as Theorem \ref{fate} predicts.

This basic picture is modified by the fact that dimers exchange elements over a number of steps which depends on their separation. Specifically, if the dimers are separated by a distance $a$, then they will typically exchange elements over $a^{d-2}$ steps. This timescale reflects the fact that a random walk from the origin $o \in \Z^d$, which is conditioned to return to $\{o,x\}$ for $o \neq x \in \Z^d$, reaches $x$ first with a probability of roughly $\|x\|^{2-d}$. If the dimers do not exchange elements during a period of $a^2$ steps, then the separation of the dimers typically doubles over the same period, after which it takes $2^{d-2}$ times longer for them to exchange elements. Hence, if $d \geq 5$, then dimer separation grows quickly enough that elements are never exchanged, which suggests transience. In contrast, if $d \in \{3,4\}$, then dimers typically consolidate before their separation doubles, which suggests recurrence. An analogous heuristic concludes the same of trimers. 

This paper develops the preceding heuristic into a proof of transience in $d \geq 5$ dimensions. As the proof shows, when $d \geq 5$, it suffices to understand how DOTs grow in separation and exchange elements. However, to prove recurrence in $d \in \{3,4\}$ dimensions, it would be necessary to extend such an understanding to clusters of all sizes, which introduces new challenges that are left to future work.

In summary, the greater the separation between DOTs, the longer it takes for them to exchange elements. This effect becomes more pronounced as $d$ increases. Until DOTs exchange elements, the pairwise distances between clusters behave like the norms of $d$-dimensional random walks, which inclines them to grow increasingly separated due to the transience of random walk in $d \geq 3$ dimensions. In $d \geq 5$ dimensions, we will be able to show that DOT separation grows rapidly enough in the absence of element exchange that it is typical for no element to be exchanged, leading to Theorem~\ref{fate}.

\subsection*{Organization} Figure~\ref{fig: logdep} shows how the proof of Theorem~\ref{fate} is organized. The main tool that we use to prove Theorem \ref{fate} is an approximation of HAT by another Markov chain, called {\em intracluster} HAT (IHAT), which treats clusters as if they inhabited separate copies of $\Z^d$. Under IHAT, we can model the motion of dimers and trimers as random walks. This is the approach that underlies the proof of Proposition~\ref{inf xi}. In Section~\ref{sec fate}, we prove Theorem~\ref{fate}, assuming Theorem~\ref{thm form dot} and Proposition~\ref{inf xi}. In Section~\ref{strat}, we briefly discuss the strategy of the proof of Proposition~\ref{inf xi}. We motivate and define IHAT in Section~\ref{sec intra}, which requires an extension of harmonic measure to tuples of sets. Section \ref{hm bd} proves estimates that we use in Section~\ref{comp act} to compare the transition probabilities of HAT and IHAT. We apply these results to bound the error of approximating HAT by IHAT in Section \ref{sec comparison}. The main approximation result, Proposition~\ref{pe to qe}, states that we can bound below HAT probabilities with IHAT probabilities, for events that entail sufficiently rapid growth of separation in the natural partitioning of HAT. Section \ref{ref times} introduces a random walk model of the separation between a pair of IHAT clusters, and Section \ref{sec clust sep} uses this random walk to obtain key estimates of separation growth under IHAT. Beginning in Section \ref{sec strat 11}, the focus shifts to the proof of Theorem \ref{thm form dot}, which concerns the formation of configurations with DOT clusterings. Section \ref{sec inputs} presents additional geometric inputs and random walk estimates, which are applied in Section \ref{sec form dot} to analyze the three algorithms around which the proof of Theorem \ref{thm form dot} is organized. The last section, Section \ref{sec irred}, proves Theorem \ref{irred}, which states that HAT is irreducible on non-isolated configurations.

\begin{figure}[htbp]
\centering \resizebox{0.85\textwidth}{!}{
\begin{tikzpicture}[->,>=stealth,auto,node distance=3cm,  thick,main node/.style={rectangle,draw}]

  \node[main node] (1) {Theorem~\ref{fate}};
  \node[main node] (2) [below left of=1] {Theorem~\ref{thm form dot}};
  \node[main node] (3) [below right of=1] {Proposition~\ref{inf xi}};
  \node[main node] (4) [below left of=3] {Proposition~\ref{pe to qe}};
  \node[main node] (5) [below of=3] {Proposition~\ref{rel sep bd}};
  \node[main node] (6) [below right of=3] {Proposition~\ref{tl ntb}};
  \node[main node] (7) [below left of=2] {Propositions~\ref{alg1 works}--\ref{break lines}};
  \node[main node] (8) [below of=4] {Proposition~\ref{p to q}};
  \node[main node] (9) [below of=6] {Proposition~\ref{qg1}};
  \node[main node] (10) [below right of=9] {Propositions~\ref{h sub g}--\ref{h4}};
  \node[main node] (11) [below right of=8] {Proposition~\ref{at pairs}};
  \node[main node] (12) [below left of=8] {Propositions~\ref{prop h comp} \& \ref{prop t comp}};

  \path[every node/.style={font=\sffamily\small}]
    (2) edge node {} (1)
    (3) edge node {} (1)
    (4) edge node {} (3)
    (5) edge node {} (3)
    (6) edge node {} (3)
    (7) edge node {} (2)
    (8) edge node {} (4)
    (9) edge node {} (6)
    (10) edge node {} (9)
    (11) edge node {} (8)
    (12) edge node {} (8);
\end{tikzpicture}
}
\caption[A diagram of the inputs to the proof of Theorem~\ref{fate}]{A diagram of the inputs to the proof of Theorem~\ref{fate}, excluding basic geometric results and random walk estimates.
}
\label{fig: logdep}
\end{figure}
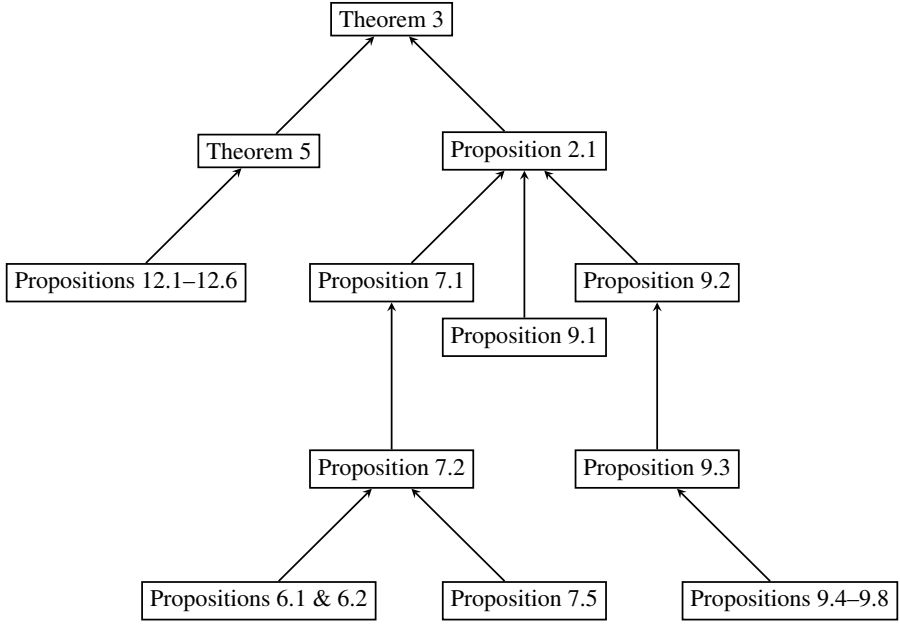

\subsection*{Conventions and Forthcoming Notation} When we refer to a ``constant'' without further qualification, we always mean a positive number. We always use $d$ and $n$ to denote positive integers that represent the ambient dimension and a number of elements. We use $o$ to denote the origin in $\Z^d$ and $e_j$ to denote the element of $\Z^d$ with $j$\textsuperscript{th} coordinate equal to $1$ and all other coordinates equal to $0$. For a real number $r$, we use $[r]$ to denote the integer part of $r$. We use $\bdy A$ to denote the exterior vertex boundary $\{z \in \Z^d: \dist(z,A) = 1\}$ of a set $A \subseteq \Z^d$, and $\rad (A) = \{\|x\| : x \in A\}$ to denote its radius. We use $\E_x$ and $\EE_V$ to denote expectation with respect to $\P_x$ and $\PP_V$, and $\1_E$ to denote the indicator of an event $E$. We use $f \lesssim g$ to denote the estimate $f \leq c g$ for a constant $c$ that may depend on $d$, and we denote such a quantity $f$ by $O(g)$. In some instances, we use $f \lesssim_n g$ to denote the same estimate, except permitting $c$ to depend on $n$ as well. We use $f \gtrsim g$ and $\Omega (g)$ analogously, for the reverse estimate. We use $f \asymp g$ when $f \lesssim g$ and $f \gtrsim g$.

\section{Proof of Theorem \ref{fate}}\label{sec fate}

Theorem \ref{fate} states that there is a random time $\theta$ which is $\PP_V$-a.s.\ finite for every configuration $V$ and after which the natural clustering of $(U_\theta, U_{\theta+1}, \dots)$ grows in separation according to \eqref{dot conds}. Informally, we will define $\theta$ as the time of the first success in a sequence of trials, each of which attempts to observe the natural clustering with a sufficiently well separated DOT clustering satisfy \eqref{dot conds}. The fact that $\theta$ is a.s.\ finite will be a simple consequence of two results. First, Theorem \ref{thm form dot} implies that, if the present trial fails, then we can conduct another after waiting an a.s.\ finite number of steps for $U_t$ to have a sufficiently well separated DOT clustering. Second, the following result states that each trial succeeds with a uniformly positive probability, hence we need only conduct an a.s.\ finite number of trials before one succeeds.

\begin{proposition}\label{inf xi}
Fix $d \geq 5$ and $n \geq 4$. There exists $b = b (n) \geq 1$ such that, for any $\delta \in (0,\frac12)$, there exists $\alpha = \alpha (d,n,\delta) > 0$ such that, if $W \in \Conf_{d,n}$ has a clustering $\mathcal{W} \in \Clust_{\alpha,b}^\bullet (W) \cap \Refcon^\times$, then 
\begin{equation}\label{eq inf xi}
\PP_{W} (\xi = \infty) \geq \frac14,
\end{equation}
where $\xi$ is the first time that the natural clustering $(\UU_t)_{t \geq 0}$ of $(U_t)_{t \geq 0}$ with $\mathcal{W}$ is not in $\Clust_{g(t),b}^\bullet (U_t)$ where $g(s) = s^{\frac12 - \delta}$ for $s \geq 0$, i.e., 
\[
\xi = \inf \{t \geq 0: \UU_t \notin \Clust_{g(t),b}^\bullet (U_t)\}.
\]
\end{proposition}

The proof of Proposition \ref{inf xi} will comprise several sections, and we dedicate the next section to a discussion of the proof strategy. For now, we assume this proposition and Theorem~\ref{thm form dot}, and use them to prove Theorem \ref{fate}.

\begin{proof}[Proof of Theorem \ref{fate}]
Fix $d \geq 5$ and $n \geq 4$. Let $\alpha > 0$ and $b \geq 1$ be the constants from Proposition~\ref{inf xi}. Fix $\delta \in (0,\frac12)$. In these terms, denote $g(s) = s^{\frac12-\delta}$ for $s \geq 0$. Additionally, let $\psi$ be a function which, given a configuration $U \in \Conf_{d,n}$ that has a clustering in $\Clust_{\alpha,b}^\bullet (U)$, determines one such clustering. (We use $\psi$ to ``pick'' one of potentially multiple clusterings; it is otherwise unimportant.) 

We define $\theta$ in terms of two sequences of random times, $(\tau_i)_{i \geq 1}$ and $(\xi_i)_{i \geq 0}$. In words, $\tau_i$ is the first time $t \geq \xi_{i-1}$ at which $U_t$ has an $(\alpha,b)$ DOT clustering in $\Refcon^\times$, and $\xi_i$ is the first time $t > \tau_i$ at which the natural clustering $(\UU_t)_{t \geq \tau_i}$ of $(U_t)_{t \geq \tau_i}$ with $\psi (U_{\tau_i})$ is not a $(g(t-\tau_i),b)$ DOT clustering of $U_t$. More precisely, we define $\xi_0 = 0$ and, for $i \geq 1$,  
\begin{align*}
\tau_i &=  \inf \left\{t \geq \xi_{i-1}: \Clust_{\alpha,b}^\bullet (U_t) \cap \Refcon^\times \neq \emptyset \right\}, \quad \text{and}\\ 
\xi_i &= \inf \left\{t > \tau_i: \UU_t \notin \Clust_{ g(t-\tau_i),b}^\bullet (U_{t}) \right\}.
\end{align*}
Lastly, we define $\theta = \tau_I$ for $I = \inf \{i \geq 1: \tau_i < \infty, \xi_i = \infty\}$. By the definition of $\theta$, \eqref{dot conds} is satisfied, so it remains to show that $\theta$ is $\PP_V$-a.s.\@ finite for every $V \in \Conf_{d,n}$.

Let $J =\inf \{j : \xi_j = \infty\}$. For every $V \in \Conf_{d,n}$, we have
\begin{equation}\label{thetatoj}
\PP_V (\theta < \infty) = \PP_V (I < \infty) = \PP_V (J < \infty).
\end{equation}
The first equality follows from the definitions of $I$ and $\theta$. The second equality follows from the fact that $\PP_V (\tau_i < \infty \mid \xi_{i-1} < \infty) = 1$, which is a simple consequence of Theorem~\ref{thm form dot}. 

To bound the tail probabilities of $J$, we write
\begin{equation}\label{jj2}
\PP_V (J > j+1 \,|\, J > j) = \frac{\EE_V \big[ \PP_{U_{\tau_j}} (\xi_1 < \infty); J > j\big]}{\PP_V (J > j)} \leq \frac{3}{4}.
\end{equation}
The equality follows from the strong Markov property applied at time $\xi_j$ and the fact that $\PP_{U_{\xi_j}} (\tau_1 < \infty) = 1$. The inequality is due to Proposition~\ref{inf xi}, which implies that $\PP_{U_{\tau_j}} (\xi_1 < \infty)$ is at most $\frac34$.

The bound \eqref{jj2} implies that $\PP_V (J > j)$ is summable, so the Borel-Cantelli lemma implies that $J$ is $\PP_V$-a.s.\@ finite, which by \eqref{thetatoj} implies the same of $\theta$.
\end{proof}

\section{Strategy for the proof of Proposition \ref{inf xi}}\label{strat}

The proof of Proposition \ref{inf xi} has two main steps. First, we prove that, when DOTs are $a$ separated in $d \geq 5$ dimensions, HAT approximates a related process, called {\em intracluster HAT}, in which transport occurs only to the cluster at which activation occurred, over $a^2$ steps, up to an error of $O(a^{-1} \log (a))$. Second, we show that over $a^2$ steps of IHAT, the separation between every pair of clusters effectively doubles, except with a probability of $O(a^{-1})$. We show this by considering the pairwise differences between representative elements of each cluster, viewed at consecutive times of return to certain, ``reference'' DOTs. Viewed in this way, the pairwise differences are $d$-dimensional (symmetric, but not simple) random walks. 
We then apply the same argument with $2a$ in the place of $a$, then $4a$ in the place of $2a$, and so on. Each time the separation doubles, the approximation and exception errors halve, which implies that if $a$ is sufficiently large, then the separation grows without bound, with positive probability.

The second step is possible because, under IHAT, each DOT inhabits a separate copy of $\Z^d$, which simplifies our analysis of their separation. We define IHAT by conditioning the transport component of the HAT dynamics on {\em intracluster transport}, i.e., transport can only occur to the boundary of the cluster at which activation occurred. Because intercluster transport over $a^2$ steps is atypical when clusters are $a$ separated, IHAT is a good approximation of HAT over the period during which clusters typically double in separation.

The activation component of the IHAT dynamics is defined in terms of an extension of harmonic measure to clusterings of configurations, which approximates the harmonic measure of the union of the clusters when they are well separated. This harmonic measure is proportional to the escape probability of each element, from the cluster to which the element belongs---not the union of the clusters. In this way, the IHAT dynamics treats each cluster in isolation. When clusters are $a$ separated, the harmonic measure of a clustering agrees with the harmonic measure of the union of the clusters, up to a factor of $1 - O(a^{2-d})$. In fact, the discrepancy between the transport components of HAT and IHAT will give rise to the dominant error factor; we discuss this in greater detail in the next section.

\section{Intracluster HAT}\label{sec intra}

This section motivates the definition of intracluster HAT by examining the transition probabilities of HAT. According to the discussion of heuristics at the end of Section~\ref{sec intro}, if a HAT configuration in $d \geq 3$ dimensions has an $(a,b)$ DOT clustering, then intercluster transport is rare to the extent that $\frac{b \log a}{a}$ is small. This observation suggests that, if a configuration consists of well separated clusters, then it is possible to approximate HAT by an analogous but simpler process in which the clusters inhabit separate copies of $\Z^d$. In other words, this analogous process, which we call intracluster HAT, is a Markov chain on {\em tuples} of configurations. To define IHAT, we adapt the activation and transport components of HAT, in a way that leads to small approximation error in terms of $a$ and $b$ when a configuration has an $(a,b)$ DOT clustering. The next two subsections elaborate the way that we adapt the components of the HAT dynamics to a Markov chain on tuples of configurations, and explain the circumstances under which these components closely approximate their analogues. The third subsection defines IHAT.

\subsection{Adapting the activation component}

The activation component of the HAT dynamics from a configuration $U \in \Conf_d$ is simply the harmonic measure of $U$ \eqref{eq tps0}. We therefore seek to define an analogue of harmonic measure for the states IHAT, i.e., tuples of configurations. To be useful, this analogue must closely approximate $\H_U$ whenever a tuple of configurations constitutes a well separated DOT clustering of $U$.

We use an expression for harmonic measure that is equivalent to \eqref{lim him} in $d \geq 3$ dimensions. Define the escape probability of a finite set $A \subset \Z^d$ by
\[
\Es_A (x) = \P_x (\tau_A = \infty) \ \1 (x \in A), \quad x \in \Z^d.
\]
Further define the capacity of $A$ by
\[
\capac_A = \sum_{x \in A} \Es_A (x).
\]
The harmonic measure of $A$ equals
\begin{equation*}
\H_A (x) = \frac{\Es_A (x)}{\capac_A}, \quad x \in \Z^d.
\end{equation*}
For a proof of this fact, see \cite[Theorem 2.1.3]{lawler2013intersections}. Note that, for $\Es_A (x)$ and $\H_A (x)$ to be positive, it is necessary (but not sufficient) for $x$ to belong to the interior vertex boundary of $A$. 

The following example motivates the way that we define the activation component of IHAT. Consider a configuration $U \in \Conf_d$ that can be partitioned into $\mathcal{P} = (\mathcal{P}^1, \mathcal{P}^2)$ such that $|\mathcal{P}^1| \in \{2,3\}$ and $\sep (\mathcal{P}) \geq a$ for some $a > 1$. We think of $\mathcal{P}^1$ as one DOT and $\mathcal{P}^2$ as the union of one or more clusters. If $x \in \mathcal{P}^1$, then we can bound below $\H_U (x)$ in terms of $\H_{\mathcal{P}^1} (x)$, by noting that
\[
\Es_U (x) \geq \Es_{\mathcal{P}^1} (x) - \P_x (\tau_{\mathcal{P}^2} < \infty).
\]
In Section~\ref{hm bd}, we will prove that, if $a$ is at least a certain constant, then
\[
\Es_{\mathcal{P}^1} (x) \gtrsim 1 \quad \text{and} \quad \P_x (\tau_{\mathcal{P}^2} < \infty) \lesssim |U| a^{2-d}.
\]
The first bound is plausible because $\mathcal{P}^1$ only has two or three elements. The second bound follows from a union bound over the elements of $\mathcal{P}^2$ and the fact that, if $y \in \Z^d$ is sufficiently far from $x$, then $\P_x (\tau_y < \infty) \asymp \|x-y\|^{2-d}$.   
According to these bounds, the escape probability of $U$ satisfies
\[
\Es_U (x) \geq \left( 1 - O \left( |U| a^{2-d} \right) \right) \Es_{\mathcal{P}^1} (x).
\]
To convert this into a lower bound of $\H_U (x)$, we divide by the capacity of $U$ and identify a factor of $\H_{\mathcal{P}^1} (x)$:
\begin{equation*}
\H_U (x) \geq \left( 1 - O \left( |U| a^{2-d} \right) \right) \frac{\capac_{\mathcal{P}^1}}{\capac_U} \H_{\mathcal{P}^1} (x).
\end{equation*}
Lastly, we note that the capacity of a union is at most the sum of the individual sets' capacities \cite[Proposition 2.2.1]{lawler2013intersections}, hence 
\begin{equation}\label{heur2}
\H_U (x) \geq \left( 1 - O \left( |U| a^{2-d} \right) \right) \frac{\capac_{\mathcal{P}^1}}{\capac_{\mathcal{P}^1} + \capac_{\mathcal{P}^2}} \H_{\mathcal{P}^1} (x).
\end{equation}

The virtue of \eqref{heur2} is that, aside from an error term that is small when the separation $a$ is large relative to $|U|$, the lower bound refers to the parts of the partition in isolation, which aligns with our goal of treating well-separated clusters as if they inhabit separate copies of $\Z^d$. It suggests that we should define the activation component of IHAT in the following way:
\begin{enumerate}
\item Given a tuple of finite sets, randomly select one of these sets with a probability that is proportional to its capacity.
\item Then, select an element of this set according to its harmonic measure.
\end{enumerate}
We formalize this as a definition of harmonic measure for tuples of finite sets.

Denote by $\Fin_d$ the collection of nonempty, finite subsets of $\Z^d$ and by $\Fin_d^\times$ the collection of tuples of such sets, i.e., 
\[
\Fin_d = \{A \subset \Z^d: 1 \leq |A| < \infty\} \quad \text{and} \quad \Fin_d^\times = \cup_{k=1}^\infty \left\{ (\A^i)_{i=1}^k \in \Fin_d^k \right\}.
\]
For $\A = (\A^i)_{i=1}^k \in \Fin_d^\times$, we refer to $\A^i$ as the $i$\textsuperscript{th} entry of $\A$ and we use $\# \A$ to denote the number of entries $k$ in $\A$.

We define the capacity of $\A \in \Fin_d^\times$ by
\[
\capac_\A = \sum_{i=1}^{\# \A} \capac_{\A^i},
\]
and the harmonic measure of $\A$ by 
\begin{equation}\label{ichm}
\H_\A (i,x) = \frac{\Es_{\A^i} (x)}{\capac_\A}, \quad 1 \leq i \leq \# \A, \quad x \in \A^i.
\end{equation}
We refer to $\H_\A (i,x)$ as the harmonic measure of $\A$ at $(i,x)$. Note that we must specify $i$ in $\H_\A (i,x)$ because the entries of $\A$ may not be disjoint. 
To interpret $\H_\A (i,x)$, note that the harmonic measure of $\A$ at $(i,x)$ is equal to the harmonic measure of $\A^i$ at $x$, weighted by the ratio of the capacities of $\A^i$ and $\A$:
\begin{equation}\label{hm interp}
\H_\A (i,x) = \frac{\capac_{\A^i}}{\capac_\A} \, \H_{\A^i} (x).
\end{equation}
In other words, to obtain $(I,X) \sim \H_\A$, randomly select an entry $I$ of $\A$ in proportion to its capacity, then randomly select an element $X$ of this entry according to its harmonic measure. 

We use this definition of harmonic measure in Section~\ref{subsec ihat} to define the activation component of IHAT. In Section~\ref{comp act}, we revisit \eqref{heur2} in preparation for the proof of our main approximation result, which compares the transition probabilities of HAT and IHAT. In the following subsection, we motivate the definition of the transport component of IHAT.

\subsection{Adapting the transport component}\label{subsec motivation}

Recall that the transport component of HAT from a configuration $U \in \Conf_d$, given a site of activation $x \in U$, is the conditional probability
\begin{equation}\label{heur3}
\P_x (S_{\tau-1} = y \mid \tau < \infty), \quad y \in \Z^d,
\end{equation}
where $\tau$ abbreviates $\tau_{U \setminus \{x\}}$. We aim to compare this conditional probability to the conditional probability that results from replacing $\tau$ with the return time to the rest of the cluster of $U$ to which $x$ belongs. The latter is the analogue of \eqref{heur3} with exclusively intracluster transport.

Let $a > e$ and $b > 0$, and assume $d \geq 3$. Suppose that $U$ has an $(a,b)$ clustering $\CC = (\CC^1, \CC^2)$, i.e., each cluster has at least two elements, the clusters are separated by a distance of at least $a$, and their diameters are at most $b \log \dist (\CC^1, \CC^2)$. Fix elements $x \in \CC^1$ and $y$ in the exterior vertex boundary of $\CC^1 {\setminus} \{x\}$, and abbreviate $A = \CC^1 {\setminus} \{x\}$. In these terms, we aim to compare the probability in \eqref{heur3} to
\begin{equation*}
\P_x (S_{\tau_A-1} = y \mid \tau_A < \infty).
\end{equation*}

For $\{S_{\tau-1} = y\}$ to occur, the random walk must visit $A$ before $\CC^2$ because $y \in \bdy A$ and $\dist (A, \CC^2) > 1$ by assumption. Hence,   
\[
\P_x ( S_{\tau - 1} = y, \tau < \infty) = \P_x (S_{\tau - 1} = y, \tau_A < \tau_{\CC^2}, \tau < \infty).
\]
In fact, because $\tau = \min \{\tau_A, \tau_{\CC^2}\}$, this probability equals
\[
\P_x (S_{\tau_A - 1} = y, \tau_A < \infty) - \P_x (S_{\tau_A - 1} = y, \tau_{\CC^2} < \tau_A < \infty),
\]
which implies that
\begin{multline}\label{ict hat}
\P_x ( S_{\tau - 1} = y \mid \tau < \infty) = \P_x (S_{\tau_A - 1} = y \mid \tau_A < \infty)\\ 
\times \underbrace{\frac{\P_x (\tau_A < \infty)}{\P_x (\tau < \infty)}}_{( \ref{ict hat} \text{a})} \Bigg( 1 - \underbrace{\frac{\P_x (S_{\tau_A - 1} = y, \tau_{\CC^2} < \tau_A < \infty)}{\P_x (S_{\tau_A - 1} = y, \tau_A < \infty)}}_{( \ref{ict hat} \text{b})} \Bigg).
\end{multline}

In Section~\ref{comp act}, we will show that (\ref{ict hat}a) and (\ref{ict hat}b) satisfy
\[
\text{(\ref{ict hat}a)} \geq 1 - O \left( |U| \left( b a^{-1} \log a\right)^{d-2} \right) \quad \text{and} \quad \text{(\ref{ict hat}b)} \leq O \left( |U| \left(b a^{-2} \log a \right)^{d-2} \right). 
\]
These bounds rely on the fact that random walk from $x$ hits a sufficiently distant set $B$ with a probability of at least $\dist(x,B)^{2-d}$ and at most $|B| \dist(x,B)^{2-d}$, up to constant factors. The separation properties that define an $(a,b)$ clustering are valuable for simplifying ratios of these quantities.

For example, the $O$ term in the lower bound of (\ref{ict hat}a) arises as the ratio of the hitting probabilities of $\CC^2$ and $A$ from $x$. Since $\CC^2$ and $A$ are distances of at least $\sep (\CC)$ and at most $\diam (\CC^1)$ from $x$, this ratio is roughly
\begin{equation*}
|U| \left( \frac{\diam (\CC^1)}{\sep (\CC)} \right)^{d-2} \leq |U| \left( \frac{b \log \sep (\CC)}{\sep (\CC)} \right)^{d-2} \leq |U| \left( \frac{b \log a}{a} \right)^{d-2}.
\end{equation*}
The first inequality holds because $\CC$ is $(\cdot, b)$ separated; the second holds because $\CC$ is $(a,\cdot)$ separated and because $\frac{\log r}{r}$ decreases as $r > e$ increases.

The upper bound of (\ref{ict hat}b) follows from similar considerations, with the exception that the event in the numerator of (\ref{ict hat}b) requires random walk from $x$ to traverse a distance of $\sep (\CC)$ twice, from $\CC^1$ to $\CC^2$ and then back to $A$. This leads to an additional factor of $a^{2-d}$ in the bound of (\ref{ict hat}b).

Substituting these bounds into \eqref{ict hat} gives
\begin{equation}\label{heur4}
\P_x (S_{\tau-1} = y \mid \tau < \infty) \geq \left(1 - O \left( |U| \left( b a^{-1} \log a \right)^{d-2} \right) \right) \, \P_x (S_{\tau_A-1} = y \mid \tau_A < \infty).
\end{equation}
The virtue of the lower bound in \eqref{heur4} is that, aside from the error term, it refers only to the cluster to which $x$ belongs, and not the entire configuration $U$. It shows that the approximation of HAT's transport component by one in which only intracluster transport is allowed is accurate to the extent that $|U| (\frac{b\log a}{a})^{d-2}$ is small. In the next subsection, we use this intracluster transport component to define IHAT.

\subsection{The definition of IHAT}\label{subsec ihat}

The state space of IHAT is the collection $\Conf_d^\times \subset \Fin_d^\times$ of tuples of configurations: 
\[
\Conf_d^\times = \cup_{k=1}^\infty \left\{ (\A^i)_{i=1}^k \in \Conf_d^k \right\}.
\]
We denote the state of IHAT at time $t \geq 0$ by $\VV_t$. Given $\VV_t$, we define the transition probabilities in terms of 
\begin{equation}\label{eq tps q}
q_{\VV_t} (i,x,y) = \H_{\VV_t} (i,x) \, \P_x ( S_{\tau-1} = y \mid \tau < \infty), \quad 1 \leq i \leq \# \VV_t, \quad x, y \in \Z^d,
\end{equation}
where $\tau$ abbreviates $\tau_{\VV_t^i \setminus \{x\}}$. In analogy with $p_{U_t} (x,y)$ \eqref{eq tps0}, the quantity $q_{\VV_t} (i,x,y)$ is the probability that activation occurs in $\VV_t^i$ at $x$ and transport occurs to $y$. Note that, for \eqref{eq tps q} to be positive, it is necessary for $y$ to belong to the exterior vertex boundary of $\VV_t^i \setminus \{x\}$.

\begin{definition}[Intracluster HAT] 
IHAT is the discrete-time Markov chain $(\VV_t)_{t \geq 0}$ on the state space $\Conf_d^\times$ with the following transition probabilities given by 
\begin{equation}\label{eq tps pprod}
\QQ \left( \VV_{t+1} = (\VV_t \sd{i} \{x\}) \cup^i \{y\} \bigm\vert \VV_t \right) = 
\begin{cases}
q_{\VV_t} (i,x,y) & x \neq y,\\ 
\sum_{j=1}^{\# \VV_t} \sum_{z \in \Z^d} q_{\VV_t} (j,z,z) & x = y,
\end{cases}
\end{equation}
for $1 \leq i \leq \# \VV_t$ and $x,y \in \Z^d$. We denote the law of IHAT from $\VV$ by $\QQ_\VV$.
\end{definition}

\section{Inputs to the comparison of HAT and IHAT}\label{hm bd}

This section proves estimates of hitting probabilities and harmonic measure that we will use to compare the transition probabilities of HAT and IHAT. 
Our basic tool is Green's function $G(x)$, which is defined in $d \geq 3$ dimensions as the expected number of visits to $x$ by random walk from the origin: 
\begin{equation}\label{eq green}
G(x) = \E_o \left[ \sum_{j=0}^\infty \1 (S_j = x) \right], \quad x \in \Z^d.
\end{equation}
There is a constant $\kappa > e$ such that
\begin{equation}\label{eq green asymp}
G(x) \asymp \|x\|^{2-d},
\end{equation}
for every $x \in \Z^d$ with norm $\|x\| \geq \kappa$ \cite[Theorem 4.3.1]{lawler2010random}. (We specify that $\kappa$ exceeds $e$ so that $\frac{\log r}{r}$ decreases in $r \geq \kappa$, a fact that we will use later.) This estimate of Green's function implies useful bounds on hitting probabilities.

\begin{lemma}\label{simple set hit}
Let $x \in \Z^d$ and let $A \subset \Z^d$ be nonempty and finite. If $\dist (x,A) \geq 1$, then
\begin{equation}\label{eq simple set hit lb}
\P_x (\tau_A < \infty) \gtrsim \dist (x, A)^{2-d}.
\end{equation}
If $\dist (x, A) \geq \kappa$, then
\begin{equation}\label{eq simple set hit ub}
\P_x (\tau_A < \infty) \lesssim |A| \dist (x,A)^{2-d}.
\end{equation}
\end{lemma} 

\begin{proof}
Suppose that $y_1$ is the element of $A$ closest to $x$ and that $z = y_2$ maximizes $\P_x (\tau_z < \infty)$ among $z \in A$. The inclusion $\{\tau_{y_1} < \infty\} \subseteq \{\tau_A < \infty\}$ and a union bound over the elements of $A$ imply that
\[
\P_x (\tau_{y_1} < \infty) \leq \P_x (\tau_A < \infty) \leq |A| \, \P_x (\tau_{y_2} < \infty).
\]
If $\dist (x,A) \in [1,\kappa)$, then \eqref{eq simple set hit lb} holds because $\P_x (\tau_{y_1} < \infty) \gtrsim 1$. If $\dist (x,A) \geq \kappa$, then \eqref{eq simple set hit lb} holds because
\[
\P_x (\tau_{y_1} < \infty) = \frac{G(x-y_1)}{G(o)} \stackrel{\eqref{eq green asymp}}{\asymp} \|x-{y_1}\|^{2-d} = \dist(x,A)^{2-d},
\]
while \eqref{eq simple set hit ub} holds because
\[
\P_x (\tau_{y_2} < \infty) = \frac{G(x-y_2)}{G(o)} \stackrel{\eqref{eq green asymp}}{\asymp} \|x-y_2\|^{2-d} \leq \dist(x,A)^{2-d}.
\]
The first equality in the two preceding displays is a standard identity; see, e.g., \cite[Equation 3.4]{popov2021}.
\end{proof}

We also need two monotonicity properties of Green's function. The first property is that Green's function is nonincreasing with respect to a partial order $\preceq$ on $\Z^d$, defined for $x, y \in \Z^d$ by 
\[
x \preceq y \quad \iff \quad \forall i \in \llb d \rrb, \quad |x_i| \leq |y_i|,
\]
where $x_i$ and $y_i$ denote the $i$\textsuperscript{th} components of $x$ and $y$.

\begin{lemma}[Lemma 8 of \cite{chiarini2016note}]\label{gpart}
If $x, y \in \Z^d$ and $x \preceq y$, then $G(x) \geq G(y)$. 
\end{lemma}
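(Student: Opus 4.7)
The plan is to reduce the inequality, by symmetry and transitivity, to the single-coordinate monotonicity $G(z + e_j) \leq G(z)$ for each $z \in \Z_{\geq 0}^d$ and each $j \in \llb d \rrb$, and then to establish this adjacent-point inequality via a continuous-time embedding of simple random walk in which the coordinates evolve independently.

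First, because simple random walk on $\Z^d$ is invariant under each of the coordinate-flip reflections $u_i \mapsto -u_i$, Green's function is invariant under all such sign changes, so $G(u)$ depends on $u$ only through $(|u_1|, \dots, |u_d|)$. Applying this to both $x$ and $y$ reduces the problem to the case $x, y \in \Z_{\geq 0}^d$ with $x_j \geq y_j$ componentwise, and a monotone lattice path from $y$ to $x$ in the non-negative orthant chains single-coordinate increments, so only the adjacent case matters.

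Next, I would introduce a continuous-time random walk $(X_t)_{t \geq 0}$ on $\Z^d$ whose coordinates are $d$ independent rate-one continuous-time walks on $\Z$, each making $\pm 1$ jumps with equal probability at the ticks of its own Poisson clock. The embedded discrete chain of this CTRW is simple random walk on $\Z^d$, and each sojourn is exponential of mean $1/d$, so the CTRW Green's function $\wt G(x) := \int_0^\infty \P_0(X_t = x)\,dt$ satisfies $\wt G = d^{-1} G$. Coordinate independence gives the factorization $\P_0(X_t = x) = \prod_{j=1}^d q_t(x_j)$, where $q_t$ is the transition density of the one-dimensional rate-one CTRW. The adjacent-point inequality then reduces to showing that $q_t$ is non-increasing on $\Z_{\geq 0}$: if $q_t(k) \geq q_t(k+1)$ for every $t > 0$ and $k \in \Z_{\geq 0}$, the product is non-increasing in each $|x_j|$, and this property survives integration in $t$.

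The one-dimensional transition density has the closed form $q_t(k) = e^{-t} I_{|k|}(t)$, where $I_n$ denotes the modified Bessel function of the first kind, so the required monotonicity becomes the classical strict inequality $I_0(t) > I_1(t) > I_2(t) > \cdots$ for all $t > 0$. I expect this last step to be the main obstacle to a fully self-contained proof. The parity constraint of lattice walks ($\P_0(S_t = k) = 0$ unless $t \equiv k \pmod 2$) rules out a naive one-step reflection argument directly on SRW, since reflection across an integer level only yields monotonicity in steps of two. The continuous-time embedding is precisely the device that removes the parity constraint, as the CTRW distribution is supported on all of $\Z$ at every positive time; the adjacent-index Bessel monotonicity is then the quantitative expression of this absence of parity, deducible from standard references on special functions or from probabilistic arguments that exploit the independence of the ``up'' and ``down'' Poisson clocks underlying the one-dimensional CTRW.
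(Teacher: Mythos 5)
The paper does not supply a proof of this lemma: it is imported verbatim as Lemma 8 of Chiarini and Cipriani \cite{chiarini2016note}, which is why the statement is labelled that way. Your proposal is a correct reconstruction of what is, up to presentation, the standard argument (and, as far as I can tell, the one used in the cited reference). The reduction to the nonnegative orthant via coordinate sign-flip symmetry is right, as is the chaining to the single-increment inequality $G(z+e_j)\leq G(z)$. The continuous-time embedding is set up correctly: with each coordinate an independent rate-one walk, the embedded jump chain is simple random walk on $\Z^d$, the sojourn times are $\mathrm{Exp}(d)$, and $\int_0^\infty \P_0(X_t=x)\,dt = d^{-1}G(x)$; coordinate independence then factors the kernel as $\prod_j q_t(x_j)$ with $q_t(k)=e^{-t}I_{|k|}(t)$, and the pointwise-in-$t$ inequality $q_t(k)\geq q_t(k+1)$ survives the integration.

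The one step you flag as a potential obstacle, $I_0(t)>I_1(t)>I_2(t)>\cdots$ for $t>0$, is indeed where the content lies, but it is classical and provable in a few lines rather than a serious gap. From Poisson's integral representation $I_\nu(t) = \frac{(t/2)^\nu}{\sqrt{\pi}\,\Gamma(\nu+\tfrac12)}\int_{-1}^1 e^{ts}(1-s^2)^{\nu-\frac12}\,ds$ together with the recurrence $I_{\nu+1}(t)=I_\nu'(t)-\tfrac{\nu}{t}I_\nu(t)$, one computes
\[
I_\nu(t)-I_{\nu+1}(t)=\frac{(t/2)^\nu}{\sqrt{\pi}\,\Gamma(\nu+\tfrac12)}\int_{-1}^1 (1-s)\,e^{ts}(1-s^2)^{\nu-\frac12}\,ds > 0
\]
for every $\nu\geq 0$ and $t>0$, since the integrand is nonnegative and not identically zero. (The recurrence $I_{\nu-1}-I_{\nu+1}=\tfrac{2\nu}{t}I_\nu$ alone only gives monotonicity in steps of two, which is the parity obstruction you correctly identified; the Poisson representation is what breaks it.) You are also right that a naive reflection argument on the discrete-time walk fails for parity reasons, and that the continuous-time decoupling is the device that circumvents this. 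So the proposal is sound.
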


The second property is that the value of Green's function at the origin $o \in \Z^d$ is nonincreasing with dimension. To emphasize the dependence of this value on $d$, we denote it by $G_d (o)$.
\begin{lemma}\label{g dec}
If $d_1 \leq d_2$, then $G_{d_1} (o) \geq G_{d_2} (o)$.
\end{lemma}

\begin{proof}
Montroll \cite[Eq. (2.10)]{montroll1956} expresses $G_d (o)$ in terms of $I_0$, the modified Bessel function of the first kind, as
\[ G_d(o) = \int_0^\infty e^{-t} I_0 \Big( \frac{t}{d} \Big)^d d t.\]
According to the integral representation of $I_0$ \cite[Eq. 9.6.19]{abramowitz1964}, $I_0 (\frac{t}{d})^d$ is the $L^{\frac{1}{d}}$ norm of $e^{t \cos \theta}$ with respect to the probability measure $\pi^{-1} d \theta$ on $[0,\pi]$:
\[ I_0 \Big(\frac{t}{d}\Big)^d = \left(\frac{1}{\pi} \int_0^\pi e^{\tfrac{t \cos \theta}{d}} d \theta \right)^d.\]
If $d_1 \leq d_2$, then the $L^{\frac{1}{d_2}}$ norm is no larger than the $L^{\frac{1}{d_1}}$ norm, hence
\[ G_{d_1} (o) \geq \int_0^\infty e^{-t} I_0 \Big(\frac{t}{d_2}\Big)^{d_2} d t = G_{d_2} (o).\]
\end{proof}

Note that $G_d (o) = (1-p_d)^{-1}$, where $p_d$ is the probability that $d$-dimensional random walk from the origin returns to the origin \cite[Equation 3.15]{popov2021}. Since $p_4 \leq 0.2$ \cite{finch2003}, Lemma~\ref{g dec} implies a bound of $G_d (o) \leq \frac54$ when $d \geq 4$. We state this fact as a lemma, because we will use it several times.

\begin{lemma}\label{g5}
If $d \geq 4$, then $G_d (o) \leq \frac54$.
\end{lemma}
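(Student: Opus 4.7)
The plan is to reduce immediately to the critical case $d=5$ and then bound $G_5(o)$ directly using the Bessel integral representation already established in the proof of Lemma \ref{g dec}.

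First, I would invoke Lemma \ref{g dec}: since $G_d(o)$ is a nonincreasing function of $d$, it suffices to verify $G_5(o) \leq 1.2$. This is the key simplifying observation, because all the work then concentrates on a single dimension and becomes a concrete one-variable estimate.

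Second, from the identity used inside the proof of Lemma \ref{g dec} I would start with
\[
G_5(o) \;=\; \int_0^\infty e^{-t} \, I_0\!\Big(\tfrac{t}{5}\Big)^{\!5} \, dt,
\]
and split the integral at a fixed threshold $T$ into a ``small-$t$'' part on $[0,T]$ and a ``tail'' on $[T,\infty)$. On $[0,T]$, I would bound $I_0(t/5)^5$ using the convergent Taylor series $I_0(x) = \sum_{k \geq 0} (x/2)^{2k}/(k!)^2$, multiply out and integrate termwise against $e^{-t}$ (which produces factorials that kill the series rapidly). On $[T,\infty)$, I would use the standard asymptotic upper bound $I_0(x) \leq (2\pi x)^{-1/2} e^{x}(1 + C/x)$ for $x$ bounded away from zero, which gives $e^{-t} I_0(t/5)^5 \leq C' t^{-5/2}$, an integrable tail with an explicit $C'$ depending only on the constant in the asymptotic. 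Picking $T$ of order a few tens and combining the two bounds should yield a numerical upper bound comfortably below $1.2$, since the true value of $G_5(o)$ is close to $1.156$ and thus leaves a slack of about $0.044$.

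The main obstacle is purely bookkeeping: making the numerical bounds in both regimes quantitatively tight enough that the sum falls below $1.2$ without resorting to a computer-assisted proof. Because the tail decays as $t^{-5/2}$ and the Taylor coefficients of $I_0(t/5)^5$ are well controlled by $(1/25)^k/(k!)^2$-type factors, the arithmetic should be routine; the only care required is to pick $T$ large enough to make the tail constant manageable while keeping the number of Taylor terms needed on $[0,T]$ small. If the direct estimates turn out to be inconveniently tight, a fall-back is to instead use the random-walk representation $G_5(o) = \sum_{t \geq 0} \mathbb{P}_o(S_{2t} = o)$, compute the first several rational terms exactly, and bound the tail via the local central limit theorem for the lazy/periodic simple random walk, which gives the same $t^{-5/2}$ decay with an explicit constant.
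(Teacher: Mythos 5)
Your first step---reduce to $d=5$ via Lemma \ref{g dec}---is exactly what the paper does. After that you diverge: the paper simply cites Montroll's tabulated value for the return probability $p$ of simple random walk in $\Z^5$, namely $p < 0.14$, and then uses the elementary identity $G_5(o) = (1-p)^{-1}$ to conclude $G_5(o) \leq 1/0.86 < 1.2$. You instead propose to bound the Bessel-integral representation of $G_5(o)$ directly by splitting it at a threshold $T$, Taylor-expanding $I_0$ on $[0,T]$ and using the $I_0(x) \sim e^x/\sqrt{2\pi x}$ asymptotic on the tail. This is a legitimate and more self-contained route---it would not rely on a cited numerical table---but you leave the actual arithmetic undone, describing it as ``routine'' while observing that the slack is only about $0.044$. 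That slack is correct ($G_5(o) \approx 1.156$), and a back-of-envelope check does suggest the tail contribution from $T$ of order twenty is small (roughly $0.4\, T^{-3/2}$), so the plan is plausible; but since the entire content of the lemma is the numerical bound, a complete proof in your style would need the explicit constants and a finite Taylor truncation with a certified remainder, which is nontrivial bookkeeping rather than zero bookkeeping. Your fallback---sum $\P_o(S_{2t}=o)$ over small $t$ exactly and bound the tail via the local CLT---is essentially what Montroll's computation does under the hood, so it is closer in spirit to the paper's citation than to your primary plan. In short: the reduction to $d=5$ is the shared idea, the paper outsources the remaining numerics to a reference, and you propose to do them from scratch without actually carrying them through.
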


We use these monotonicity properties to prove simple lower bounds on escape probabilities and harmonic measure for sets with at most four elements.
\begin{lemma}\label{min es}
Let $d \geq 4$. If $U \subset \Z^d$ has $|U| \leq 4$, then
\begin{equation}\label{eq min es2}
\Es_U (x) \geq \frac{4 - 3 G(o)}{G(o)} \geq \frac15 \quad \text{and} \quad \H_U (x) \geq \frac{1}{16}, \quad x \in U.
\end{equation}
\end{lemma}

\begin{proof}
Let $x \in U$. We can express $\Es_U (x)$ in terms of $N = \sum_{j=1}^\infty \1 (S_j \in U)$, the number of returns made to $U$ by random walk, as 
\begin{equation}\label{esv bd1}
\Es_U (x) = \P_x (N=0) = 1 - \frac{\E_x [N]}{\E_x [N \mid N > 0]}.
\end{equation}

To bound above $\E_x [N]$, we note that
\begin{equation}\label{esv bd1a}
\E_x [N] = \sum_{y \in U} G(x-y) - 1.
\end{equation}
Since $x \in U$ and $|U| \leq 4$, there are $u, v, w\in \Z^d$ such that
\[
\{x - y: y \in U\} \subseteq \{o, u, v, w\} \quad \text{and} \quad \exists k,l, m \in \{1,\dots,d\}: u \succeq e_k, \,\, v \succeq e_l, \,\, w \succeq e_m.
\]
Green's function is nonnegative and nonincreasing in $\succeq$ (Lemma \ref{gpart}), hence
\begin{equation*}
\sum_{y \in U} G(x-y) \leq G(o) + G(u) + G(v) + G(w) \leq G(o) + G(e_k) + G(e_l) + G(e_m).
\end{equation*}
This bound is at most $4 G(o) - 3$ because $G(e_i) = G(o) - 1$ for every $1 \leq i \leq d$ (see, e.g., \cite[Exercise 3.2]{popov2021}). By \eqref{esv bd1a}, $\E_x [N]$ is at most $4 G(o) - 4$.

To bound below $\E_x [N \mid N>0]$, we use the fact that $\E_z [N] \geq G(o) - 1$ for every $z \in U$, since $N$ is at least the number of times that random walk returns to $z$. Consequently, 
\begin{align*}
\E_x [N \mid N > 0] & = 1 + \E_x \big[ \E_{S_{\tau_{U}}} [N] \bigm\vert \tau_{U} < \infty \big] \geq G(o).
\end{align*}

We substitute the preceding bounds into \eqref{esv bd1} to find that
\[
\Es_U (x) \geq 1 - \frac{4 G(o) - 4}{G(o)} = \frac{4 - 3G(o)}{G(o)}.
\]

To bound below the harmonic measure of $x$ in $U$, we note that $\capac_U$ is at most $4G(o)^{-1}$ because $|U| \leq 4$ and because $\Es_U (y)$ is at most $G(o)^{-1}$ for $y \in U$. We combine this fact with the preceding display to find that
\[
\H_U (x) = \frac{\Es_U (x)}{\capac_U} \geq \frac{4 - 3G(o)}{4}.
\]

Since the preceding lower bounds decrease with $G(o)$ and since $G(o) \leq \frac54$ for $d \geq 4$ (Lemma~\ref{g5}), we conclude that $\Es_U (x) \geq \frac15$ and $\H_U (x) \geq \frac{1}{16}$.
\end{proof}

We use Lemma~\ref{simple set hit} to extend the escape probability lower bound to configurations in which four or fewer elements are sufficiently far from the rest.

\begin{lemma}\label{min hm}
Let $d \geq 4$. There is a constant $c = c(d)$ such that, if $U \in \Conf_d$ can be partitioned into $\mathcal{P} = (\mathcal{P}^1, \mathcal{P}^2)$ such that $|\mathcal{P}^1| \leq 4$ and $\sep (\mathcal{P}) \geq \kappa$, then 
\begin{equation*}
\Es_U (x) \geq \left( 1 - c |U| \sep (\mathcal{P})^{2-d} \right) \Es_{\mathcal{P}^1} (x), \quad x \in \mathcal{P}^1.
\end{equation*}
\end{lemma}

\begin{proof}
Let $U$ and $\mathcal{P}$ satisfy the hypotheses and let $x \in \mathcal{P}^1$. We bound below $\Es_U (x)$ as
\begin{align*}
\Es_U (x) &= \Es_{\mathcal{P}^1} (x) - \P_x (\tau_{\mathcal{P}^1} = \infty, \tau_{\mathcal{P}^2} < \infty)\\ 
&\geq \left(1 - \frac{\P_x (\tau_{\mathcal{P}^2} < \infty)}{\Es_{\mathcal{P}^1} (x)} \right) \Es_{\mathcal{P}^1} (x) \geq \left(1 - O \left( |U| \sep (\mathcal{P}^{2-d}) \right) \right) \Es_{\mathcal{P}^1} (x).
\end{align*}
The first inequality follows from dropping the sub-event $\{\tau_{\mathcal{P}^1} = \infty\}$; the second uses the bound $\Es_{\mathcal{P}^1} (x) \geq \frac15$ from Lemma~\ref{min es}, which applies because $|\mathcal{P}^1| \leq 4$, and equation \eqref{eq simple set hit ub} of Lemma~\ref{simple set hit}, which applies because $\sep (\mathcal{P}) \geq \kappa$.
\end{proof}

We also note a simple lower bound of the harmonic measure of partitions with parts of four or fewer elements, which follows from Lemma~\ref{min es}.

\begin{lemma}\label{hm small parts}
Let $d \geq 4$. If $U \in \Conf_d$ can be partitioned into $\mathcal{P} = (\mathcal{P}^i)_{i=1}^k$ such that $|\mathcal{P}^i| \leq 4$ for every $1 \leq i \leq k$, then
\[
\H_{\mathcal{P}} ([x]_{\mathcal{P}},x) \geq \frac{1}{16 |U|}, \quad x \in U.
\] 
\end{lemma}

\begin{proof}
Let $x \in U$ and denote $i = [x]_{\mathcal{P}}$. By \eqref{hm interp},
\[
\H_{\mathcal{P}} (i,x) = \frac{\capac_{\mathcal{P}^i}}{\capac_{\mathcal{P}}} \H_{\mathcal{P}^i} (x).
\]
The claimed bound follows from the observation that $\capac_{\mathcal{P}} \leq |U| \capac_{\mathcal{P}^i}$  and the harmonic measure lower bound $\H_{\mathcal{P}^i} (x) \geq \frac{1}{16}$ of Lemma~\ref{min es}, which applies because $|\mathcal{P}^i| \leq 4$.
\end{proof}

The next result applies Lemma~\ref{simple set hit} and Lemma~\ref{min es} to obtain two estimates that we need for the transport comparison.

\begin{lemma}\label{lemma help est}
Let $d \geq 4$. There is a constant $c = c(d)$ such that, if $U \in \Conf_d$ can be partitioned into $\mathcal{P} = (\mathcal{P}^1, \mathcal{P}^2)$ such that $|\mathcal{P}^1| \leq 3$ and $\sep (\mathcal{P}) \geq \kappa$, then, for every $x \in \mathcal{P}^1$ and every $y$ in the exterior boundary of $A = \mathcal{P}^1 {\setminus} \{x\}$, 
\begin{align}
\P_x (S_{\tau_A - 1} = y, \tau_{\mathcal{P}^2} < \tau_A < \infty) &\leq c |U| \sep (\mathcal{P})^{4-2d}, \quad \quad \text{and} \label{help est1}\\ 
\P_x (S_{\tau_A - 1} = y, \tau_A < \infty) &\geq c \diam (A \cup \{y\})^{2-d}. \label{help est2}
\end{align}
\end{lemma}  

For the event in \eqref{help est1} to occur, random walk must go from $\mathcal{P}^1$ to $\mathcal{P}^2$ and then from $\mathcal{P}^2$ to $\mathcal{P}^1$. By Lemma~\ref{simple set hit}, it does so with probabilities of roughly $O\left(|\mathcal{P}^1| \sep (\mathcal{P})^{2-d} \right)$ and then $O\left( |\mathcal{P}^2| \sep (\mathcal{P})^{2-d} \right)$. To prove \eqref{help est2}, we consider the event that random walk from $x$ first escapes $B = A \cup \{y\}$ to a distance of roughly $\diam (B)$ away, before returning to $B$ at $y$. The virtue of this event is that the conditional hitting distribution of $B$ from a distance of roughly $\diam (B)$ away is comparable to the harmonic measure of $B$, which is bounded below by a constant due to Lemma~\ref{min es}. The lower bound \eqref{help est2} is due to the fact that random walk from a distance of $\diam (B)$ from $B$ returns to $B$ with a probability of $\Omega (\diam (B)^{2-d})$ by Lemma~\ref{simple set hit}.

The assumptions on $\mathcal{P}$ ensure that Lemma~\ref{simple set hit} and Lemma~\ref{min es} are applicable. In particular, we assume that $|\mathcal{P}^1| \leq 3$ instead of $|\mathcal{P}^1| \leq 4$ so that $B \cup \{x\} = \mathcal{P}^1 \cup \{y\}$ has at most four elements. This enables our use of Lemma~\ref{min es} to bound below $\Es_{B \cup \{x\}} (x)$ by a constant, which further bounds below the probability that random walk escapes to a distance of roughly $\diam (B)$ away from $B$.

\begin{proof}[Proof of Lemma~\ref{lemma help est}]
Let $U$ and $\mathcal{P}$ satisfy the hypotheses, and let $x \in \mathcal{P}^1$ and $y \in \bdy A$. We bound the probability in \eqref{help est1} as
\begin{align*}
\P_{x} (S_{\tau_A - 1} = y, \tau_{\mathcal{P}^2} < \tau_A < \infty)
&\leq \P_x (\tau_{\mathcal{P}^2} < \tau_A < \infty)\\ 
&\leq \P_x (\tau_{\mathcal{P}^2} < \infty) \max_{z \in \mathcal{P}^2} \P_z ( \tau_A < \infty) \lesssim |U| \sep (\mathcal{P})^{4-2d}.
\end{align*}
The first inequality follows from dropping the sub-event $\{S_{\tau_A-1} = y\}$; the second holds because $\max_{z \in \mathcal{P}^2} \P_z (\tau_A < \infty)$ bounds above the conditional hitting probability of $A$ from an arbitrarily distributed $Z$ in $\mathcal{P}^2$; the third follows from two applications of Lemma~\ref{simple set hit}, which applies because $\sep (\mathcal{P}) \geq \kappa$, and the fact that $|\mathcal{P}^1| |\mathcal{P}^2| \leq 3 |U|$.

We bound below the probability in \eqref{help est2} using the fact (see, e.g., \cite[Exercise 3.25]{popov2021}) that there is a constant $\lambda > 1$ such that, if the distance between $u \in \Z^d$ and a finite set $B \subset \Z^d$ is at least $\lambda \diam (B)$, then 
\begin{equation}\label{ex325}
\P_u (S_{\tau_B} = v \mid \tau_B < \infty) \geq \frac12 \H_B (v), \quad v \in B.
\end{equation}
To this end, let $B = A \cup \{y\}$ and $C = \{u \in \Z^d: \dist (u, B) \leq \lambda \diam (B)\}$, and note that
\begin{equation}\label{hm like bd}
\P_x (S_{\tau_A - 1} = y, \tau_A < \infty) \geq \frac{1}{2d} \P_x (\tau_{\bdy C} < \tau_B < \infty, S_{\tau_B} = y).
\end{equation}
In words, the probability that the random walk steps from $y$ when it first returns to $A$ is at least the probability that it does so after hitting $\bdy C$. Note that the factor of $\frac{1}{2d}$ addresses the step that the random walk takes from $y$ into $A$ to ensure that $\tau_B = \tau_A - 1$.  
The lower bound \eqref{hm like bd} factors into
\[
\frac{1}{2d} \P_x (\tau_{\bdy C} < \tau_B) \P_x (\tau_B < \infty \mid \tau_{\bdy C} < \tau_B) \P_x (S_{\tau_B} = y \mid \tau_{\bdy C} < \tau_B < \infty).
\]
We address these factors in turn.

First, the probability that random walk hits $\bdy C$ before returning to $B$ is at least the probability of escape from $B \cup \{x\}$, which, by Lemma~\ref{min es}, is at least $\frac{1}{5}$ because $|B \cup \{x\}| \leq |\mathcal{P}^1| + 1 \leq 4$:
\[
\P_x (\tau_{\bdy C} < \tau_B ) \geq \P_x (\tau_B = \infty) \geq \Es_{B \cup \{x\}} (x) \geq \frac15.
\]
Note that we use $\Es_{B \cup \{x\}} (x)$ instead of $\Es_B (x)$ because if $x \neq y$, then $x \notin B$, hence $\Es_B (x) = 0$ by definition and the bound of Lemma~\ref{min es} does not apply. 

Second, given that the random walk hits $\bdy C$ before $B$, the conditional probability that it subsequently hits $B$ satisfies
\[
\P_x (\tau_B < \infty \mid \tau_{\bdy C} < \tau_B) \geq \min_{z \in \bdy C} \P_z (\tau_B < \infty) \gtrsim \diam (B)^{2-d}.
\]
The second inequality holds by Lemma~\ref{simple set hit}, which implies that the hitting probability of $B$ from $z \in \bdy C$ is $\Omega (\dist(z, B)^{2-d})$, and by the definition of $C$, which implies that $\dist (z, B) \lesssim \diam (B)$ for every such $z$.

Third, given that the random walk hits $\bdy C$ and then $B$, the conditional hitting probability of $y$ is at least 
\[
\P_x (S_{\tau_B} = y \mid \tau_{\bdy C} < \tau_B < \infty) \geq \min_{z \in \bdy C} \P_z (S_{\tau_B} = y \mid \tau_B < \infty) \geq \frac12 \H_B (y) \geq \frac{1}{32}.
\]
The second inequality holds by \eqref{ex325}; the third by Lemma~\ref{min es}, which applies because $|B| \leq 3$.

We multiply the three preceding bounds to conclude that
\[
\frac{1}{2d} \P_x (\tau_{\bdy C} < \tau_B < \infty, S_{\tau_B} = y) \gtrsim \diam (B)^{2-d}.
\]
The bound \eqref{help est2} follows from substituting this into \eqref{hm like bd}. 
\end{proof}

\section{Comparison of activation and transport components}\label{comp act}

First, we compare the activation components of HAT and IHAT. Recall $\kappa$, the constant associated with an estimate of Green's function \eqref{eq green asymp}, and our convention that a partition must have at least two parts.

\begin{proposition}[Activation comparison]\label{prop h comp}
Let $d \geq 4$. There is a constant $c = c (d)$ such that, if $U \in \Conf_d$ can be partitioned into $\mathcal{P} = (\mathcal{P}^i)_{i=1}^k$ such that $|\mathcal{P}^i| \leq 3$ for every $1 \leq i \leq k$ and $\sep (\mathcal{P}) \geq \kappa$, then 
\begin{equation}\label{hm for laycon}
\H_U (x) \geq \big(1 - c |U| \sep(\mathcal{P}) ^{2-d} \big) \H_{\mathcal{P}} ([x]_{\mathcal{P}},x), \quad x \in U.
\end{equation}
\end{proposition}

The proof applies the escape probability lower bound of Lemma~\ref{min hm} to the partition of $U$ consisting of the part $[x]_{\mathcal{P}}$ to which $x$ belongs and the union of the other parts.

\begin{proof}[Proof of Proposition \ref{prop h comp}]
Let $U$ and $\mathcal{P}$ satisfy the hypotheses, and let $x \in U$ and $i = [x]_\mathcal{P}$. Define the coarser partition $\CC = (\CC^1, \CC^2)$ by $\CC^1 = \mathcal{P}^i$ and $\CC^2 = \mathcal{P}^{\neq i}$, which satisfies $|\CC^1| \leq 3$ and $\sep (\CC) \geq \kappa$. By Lemma~\ref{min hm} and the fact that $\sep (\CC) \geq \sep (\mathcal{P})$, there is a constant $c = c(d)$ such that
\[
\Es_U (x) \geq \big(1 - c |U| \sep (\CC)^{2-d} \big) \Es_{\mathcal{P}^i} (x) \geq \big(1 - c |U| \sep (\mathcal{P})^{2-d} \big) \Es_{\mathcal{P}^i} (x).
\]
We divide by $\capac_U$ to bound below $\H_U (x)$:
\[
\H_U (x) = \frac{\Es_U (x)}{\capac_U} \geq \big(1 - c |U| \sep (\mathcal{P})^{2-d} \big) \frac{\Es_{\mathcal{P}^i} (x)}{\capac_U} \geq \big(1 - c |U| \sep (\mathcal{P})^{2-d} \big) \H_{\mathcal{P}} (i,x).
\]
The second inequality uses the bound $\capac_U \leq \capac_\mathcal{P}$, which follows from the fact that $\Es_U (y) \leq \Es_{\mathcal{P}^j} (y)$ for every $1 \leq j \leq k$ and $y \in \mathcal{P}^j$, and the definition of $\H_{\mathcal{P}} (i,x)$.
\end{proof}

Next, we compare the transport components of HAT and IHAT. Recall that a partition $\mathcal{P}$ of a configuration $U$ is an $(a,b)$ clustering if $\sep (\mathcal{P}) \geq a$ and if the diameter of each $\mathcal{P}^i$ is at most $b \log \dist (\mathcal{P}^i, \mathcal{P}^{\neq i})$ (Definition~\ref{clustering def}). If $\mathcal{P}$ also satisfies $|\mathcal{P}^i| \in \{2,3\}$ for each $i$, then it is an $(a,b)$ DOT clustering.

\begin{proposition}[Transport comparison]\label{prop t comp}
Let $a \geq \kappa$, $b > 0$, and $d \geq 4$. There is a constant $c = c(b,d)$ such that, if $U \in \Conf_d$ has an $(a,b)$ DOT clustering $\mathcal{P}$, 
then, for every $x \in U$ and every $y$ in the exterior boundary of $A = \mathcal{P}^{[x]_{\mathcal{P}}} {\setminus} \{x\}$, 
\begin{equation*}
\P_x (S_{\tau_{U {\setminus} \{x\}}-1} = y \,\vert\, \tau_{U {\setminus} \{x\}} < \infty) \geq \left(1 - c |U| \left( \frac{\log \sep (\mathcal{P})}{\sep (\mathcal{P})} \right)^{d-2} \right) \P_x (S_{\tau_A-1} = y \,\vert\, \tau_A < \infty).
\end{equation*}
\end{proposition}

Proposition~\ref{prop t comp} only considers instances of intracluster transport---instances in which $y$ belongs to the exterior boundary of the part $[x]_{\mathcal{P}}$ to which $x$ belonged before it was removed---because the IHAT transport component would be zero otherwise. The proof revisits the equation \eqref{ict hat} and applies Lemma~\ref{simple set hit} and Lemma~\ref{help est2} to bound below its third and fourth factors.

\begin{proof}[Proof of Proposition \ref{prop t comp}]
Let $U$ and $\mathcal{P}$ satisfy the hypotheses, let $x \in U$ and $y \in \bdy A$, and let $i = [x]_{\mathcal{P}}$. Define the coarser partition $\CC = (\CC^1, \CC^2)$ by $\CC^1 = \mathcal{P}^i$ and $\CC^2 = \mathcal{P}^{\neq i}$, which satisfies $A \subset \CC^1$, $|\CC^1| \leq 3$, and $\sep (\CC) \geq \kappa$. For brevity, denote $\tau = \tau_{U \setminus\{x\}}$.

Following \eqref{ict hat}, the probability in question can be expressed as 
\begin{multline}\label{inc hat1}
\P_x (S_{\tau-1} = y \,|\, \tau < \infty) = \P_x (S_{\tau_A-1} = y \,|\, \tau_A < \infty)\\ 
\times \underbrace{\frac{\P_{x} (\tau_A < \infty)}{\P_{x} (\tau < \infty)}}_{( \ref{inc hat1} \text{a})} \Bigg( 1
- \underbrace{\frac{\P_{x} (S_{\tau_A-1} = y, \tau_{\CC^2} < \tau_A < \infty)}{\P_{x} (S_{\tau_A-1} =y, \tau_A < \infty)}}_{( \ref{inc hat1} \text{b})} \Bigg).
\end{multline}
To bound below (\ref{inc hat1}a), note that $\tau = \min \{\tau_A, \tau_{\CC^2}\}$, so lower and upper bounds of $\P_x (\tau_A < \infty) \geq p_1$ and $\P_x (\tau_{\CC^2} < \infty) \leq p_2$ for some $p_1, p_2 \in (0,1]$ would imply that
\begin{equation}\label{618a bd}
\frac{\P_{x} (\tau_A < \infty)}{\P_{x} (\tau < \infty)} \geq \frac{\P_{x} (\tau_A < \infty)}{\P_{x} (\tau_A < \infty) + \P_x (\tau_{\CC^2} < \infty)} \geq 1 - \frac{p_2}{p_1}.
\end{equation} 
By Lemma~\ref{simple set hit}, the fact that $\dist (x,A) \leq \diam (\CC^1)$, and the assumption that $\mathcal{P}$ is a $(\cdot,b)$ clustering, 
\[
\P_x (\tau_A < \infty) \gtrsim \dist(x, A)^{2-d} \gtrsim \diam (\CC^1)^{2-d} \gtrsim \left( b \log \sep (\CC) \right)^{2-d}.
\]
By \eqref{eq simple set hit ub} of Lemma~\ref{simple set hit}, which applies because $\dist(x,\CC^2) \geq \sep (\CC) \geq \kappa$, and the fact that $|\CC^2| \leq |U|$, 
\[
\P_{x} (\tau_{\CC^2} < \infty) \lesssim |U| \sep (\CC)^{2-d}.
\]
We take $p_1$ and $p_2$ to be the preceding lower and upper bounds (including the implicit constants). Substituting these choices into \eqref{618a bd} yields

\begin{equation*}
\frac{\P_{x} (\tau_A < \infty)}{\P_{x} (\tau < \infty)} \geq 1 - c_1 b^{d-2} |U| \left( \frac{\log \sep (\CC)}{\sep (\CC)} \right)^{d-2},
\end{equation*}
for a constant $c_1 = c_1 (d)$. 
To bound above (\ref{inc hat1}b), we use Lemma~\ref{lemma help est} with $\CC$ in the place of $\mathcal{P}$. The lemma is applicable because $|\CC^1| \leq 3$ and $\sep (\CC) \geq \kappa$. It implies that
\[
\frac{\P_{x} (S_{\tau_A-1} = y, \tau_{\CC^2} < \tau_A < \infty)}{\P_{x} (S_{\tau_A-1} =y, \tau_A < \infty)} \lesssim |U| \left( \frac{\diam(A \cup \{y\})}{\sep (\CC)^2} \right)^{d-2} \leq c_2 b^{d-2} |U| \left( \frac{\log \sep (\CC)}{\sep (\CC)^2} \right)^{d-2}, 
\]
for a constant $c_2 = c_2 (d)$. 
The second inequality holds because the diameter of $A \cup \{y\}$ is at most $\diam (\CC^1) + 1$, since $y \in \bdy A$ and $A \subset \CC^1$, and because the diameter of $\CC^1 = \mathcal{P}^i$ is at most $b \log \sep (\CC)$, since $\mathcal{P}$ is a $(\cdot,b)$ clustering.

We substitute the bounds on (\ref{inc hat1}a) and (\ref{inc hat1}b) into \eqref{inc hat1} to conclude that
\[
\P_x (S_{\tau-1} = y \,\vert\, \tau < \infty) \geq \left(1 - c b^{d-2} |U| \left( \frac{\log \sep (\CC)}{\sep (\CC)} \right)^{d-2} \right) \P_x (S_{\tau_A-1} = y \,\vert\, \tau_A < \infty),
\] 
where $c = 2 \max\{c_1,c_2\} b^{d-2}$. This implies the claimed bound because $\frac{\log r}{r}$ decreases with $r > e$ and $\sep (\CC)$ is at least $\sep (\mathcal{P}) \geq \kappa > e$.

\end{proof}

\section{Approximation of HAT by IHAT}\label{sec comparison}

Informally, the estimates of Section~\ref{comp act} show that the ``error'' of approximating HAT by IHAT over one step, for a transition that involves intracluster transport, is $O( |U| (\frac{\log \rho}{\rho})^{d-2})$, in terms of the separation $\rho$ of a suitable partition of the current configuration $U$. This suggests that we can use IHAT to estimate the probabilities of events that entail sufficiently rapid separation growth for the natural partitioning of HAT. The main result of this section formalizes this idea.

Fix $d \geq 5$ and $n \geq 4$. Recall that, for $a, b> 0$, and a configuration $U \in \Conf_{d,n}$, we use $\Clust_{a,b}^\bullet (U)$ to denote the (possibly empty) collection of $(a,b)$ DOT clusterings of $U$. We denote the collection of $(a,b)$ DOT clusterings of $n$-element configurations in $\Z^d$ by $\Clust_{a,b}^\bullet = \cup_{U \in \Conf_{d,n}} \Clust_{a,b}^\bullet (U)$. For $\delta \in (0,\frac12)$, we define the following set of infinite sequences of $(a,b)$ DOT clusterings:
\begin{equation}\label{event grow}
\Grow_{a,b,\delta} = \left\{ (\CC_t)_{t \geq 0} \in (\Clust_{a,b}^\bullet)^\N: \sep (\CC_t) \geq t^{\frac12-\delta} \,\,\text{for every $t \geq 0$} \right\}.
\end{equation}

Recall the definition of the natural partitioning of HAT (Definition~\ref{nat part}). The main result of this section states that, if $a$ is sufficiently large, then the probability that the natural partitioning $(\UU_t)_{t \geq 0}$ of $(U_t)_{t \geq 0}$ with an $(a,b)$ DOT clustering is a sequence of clusterings in $\Grow_{a,b,\delta}$ is at least the analogous probability for IHAT, up to a factor of $(1- o_a (1))$. Given a clustering $\CC \in \Clust_{a,b}^\bullet$, we will use $\PP_\CC$ to denote the distribution of HAT from the configuration $\cup_i \CC^i$. Under $\PP_\CC$, we will use $(\UU_t)_{t \geq 0}$ to denote the natural partitioning of $(U_t)_{t \geq 0}$ with $\CC$. Recall that $\QQ_\CC$ denotes the law of IHAT from $\CC$.

\begin{proposition}[Main approximation result]\label{pe to qe}
Fix $d \geq 5$ and $n \geq 4$. Let $b > 0$ and let $\delta \in (0, \frac12 - \frac{1}{d-2})$. For every $\varepsilon \in (0,1)$, there is a constant $\alpha = \alpha (b,d,n,\delta,\eps) > 1$ such that, if $a \geq \alpha$, then, for every $\CC \in \Clust_{a,b}^\bullet$, 
\[
\PP_{\CC} \left( (\UU_t)_{t \geq 0} \in \Grow_{a,b,\delta} \right) \geq (1 - \varepsilon) \QQ_{\CC} \left( (\VV_t)_{t \geq 0} \in \Grow_{a,b,\delta} \right).
\]
\end{proposition}

Proposition~\ref{pe to qe} is one of three inputs to the proof of Proposition~\ref{inf xi}. The other two inputs state that (i) the event which Proposition~\ref{inf xi} concerns---that the natural clustering grows in separation as Theorem~\ref{fate} predicts---is a subset of $\{(\VV_t)_{t \geq 0} \in \Grow_{a,b,\delta}\}$ for appropriate choices of $a$, $b$, and $\delta$, and that (ii) its probability under IHAT is positive. We prove these other inputs in the next two sections. In the remainder of this section, we prove Proposition~\ref{pe to qe}.

The key to the proof of Proposition~\ref{pe to qe} is a one-step approximation of HAT by IHAT.

\begin{proposition}[One-step approximation]\label{p to q}
Fix $d \geq 5$ and $n \geq 4$, and let $a,b > 0$. There are constants $\alpha_0 = \alpha_0 (b,d,n)$ and $c = c(b,d,n)$ such that, if $a \geq \alpha_0$, then, for every $\CC_0, \CC_1 \in \Clust_{a,b}^\bullet$,
\begin{equation}\label{eq p to q}
\PP_{\CC_0} (\UU_1 = \CC_1) \geq \left( 1 - c \left(a^{-1} \log a\right)^{d-2} \right) \QQ_{\CC_0} (\VV_1 = \CC_1).
\end{equation}
\end{proposition}

Proposition~\ref{pe to qe} is a consequence of a short calculation with Proposition~\ref{p to q}.

\begin{proof}[Proof of Proposition~\ref{pe to qe}]
Let $\alpha_0$ and $c$ be the constants from Proposition~\ref{p to q} and assume that $a \geq \alpha_0$. For $t \geq 0$, denote 
\[
g(t) = \max\left\{a, t^{\frac12 - \delta}\right\} \quad \text{and} \quad h (t) = \prod_{s=0}^{t-1} \left(1 - c \left( g(s)^{-1} \log g(s) \right)^{d-2} \right).
\]
If $(\DD_s)_{s \geq 0} \in \Grow_{a,b,\delta}$, then $g(t)$ is a lower bound on the separation of $\DD_t$, so the Markov property and the one-step approximation (Proposition~\ref{p to q}) imply that $h(t)$ satisfies
\begin{equation}\label{multistep h}
\PP_{\CC_0} \left( \UU_s = \DD_s, s \leq t \right) \geq h(t) \QQ_{\CC_0} \left( \VV_s = \DD_s, s \leq t \right).
\end{equation}
If $\delta < \frac12 - \frac{1}{d-2}$, then $h(t) \geq 1 - o_a (1)$. Indeed, some simple but tedious algebra shows that, due to the condition on $\delta$, there are constants $\eta = \eta (d,\delta) > 0$ and $\alpha_1 = \alpha_1 (d, \eta)$ such that, if $a \geq \alpha_1$, then
\[
c\sum_{s = 0}^\infty \left( \frac{\log g(s)}{g(s)} \right)^{d-2} \leq O \left(a^{-\eta}\right).
\]
Here, the implicit constant depends on all parameters: $b$, $d$, $n$, and $\delta$. This bound implies that $h(t)$ decreases to a limit of $h_\infty = 1 - O (a^{-\eta})$ as $t \to \infty$.  

We apply \eqref{multistep h} to conclude that
\begin{align*}
\PP_{\CC_0} \left( (\UU_s)_{s \geq 0} \in \Grow_{a,b,\delta} \right)
&= \sum \PP_{\CC_0} \left( \UU_s = \DD_s, s \geq 0 \right)\\ 
&= \sum \lim_{t \to \infty} \PP_{\CC_0} \left( \UU_s = \DD_s, 0 \leq s \leq t\right)\\ 
&\geq h_\infty \sum \lim_{t \to \infty} \QQ_{\CC_0} \left( \VV_s = \DD_s, 0 \leq s \leq t\right)\\ 
&= h_\infty \sum \QQ_{\CC_0} \left( \VV_s = \DD_s, s \geq 0\right)\\ 
&= h_\infty \QQ_{\CC_0} \left( (\VV_s)_{s \geq 0} \in \Grow_{a,b,\delta} \right), 
\end{align*}
where the sums range over $(\DD_s)_{s \geq 0} \in \Grow_{a,b,\delta}$. 
\end{proof}

The proof of Proposition~\ref{p to q} is primarily a straightforward application of Propositions~\ref{prop h comp} and \ref{prop t comp}, which compare the activation and transport components of HAT and IHAT. However, to apply these results, we must also show that the possible transitions of IHAT are equivalent to possible transitions for the natural partitioning of HAT, when the transition is between two, well separated clusterings. This is the focus of the next subsection.

\subsection{An equivalence between the possible transitions of HAT and IHAT}

We need a geometric lemma that is due to Kesten (Lemma 2.23 of \cite{kesten1986}; alternatively, Theorem~4 of \cite{timar2013}). To state it, denote by $\Z^{d\ast}$ the graph with vertex set $\Z^d$ and with an edge between distinct $x, y \in \Z^d$ when $x$ and $y$ differ by at most one in each coordinate. For $A \subseteq \Z^d$, we define the $\ast$-visible boundary $\bdyvis A$ as
\begin{multline}\label{bdyvis} \bdyvis A = \big\{x \in \Z^d: \text{$x$ is adjacent in $\Z^{d\ast}$ to some $y \in A$}\\ \text{and there is a path from $\infty$ to $x$ disjoint from $A$} \big\}.
\end{multline}

\begin{lemma}[Lemma 2.23 of \cite{kesten1986}]\label{kesten result}
If $A$ is a finite, $\ast$-connected subset of $\Z^d$, then $\bdyvis A$ is connected in $\Z^d$. 
\end{lemma}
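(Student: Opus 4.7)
The approach is to reduce the problem to a local surgery argument on a ``filled-in'' version of $A$, then connect any two vertices of $\bdyvis A$ by tracking a $\ast$-path through $A$ along the outer boundary.

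First, I would replace $A$ by $\overline{A} := \Z^d \setminus U_\infty$, where $U_\infty$ denotes the unique unbounded connected component of $\Z^d \setminus A$. Because $A$ is finite and $\ast$-connected, $\overline{A}$ is a finite, $\ast$-connected superset of $A$ whose complement $\Z^d \setminus \overline{A} = U_\infty$ is $\Z^d$-connected by construction. Moreover, $\bdyvis A = \bdyvis \overline{A}$, and the visibility condition in \eqref{bdyvis} is now automatic, since every vertex of $U_\infty$ is joined to infinity by a $\Z^d$-path within $U_\infty$ itself.

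Second, given $u, v \in \bdyvis \overline{A}$, I would pick $\Z^{d\ast}$-neighbors $a_u, a_v \in \overline{A}$ and a $\ast$-path $a_u = b_0, b_1, \ldots, b_m = a_v$ inside $\overline{A}$, which exists by $\ast$-connectedness. For each $j$, let $N_j$ denote the set of vertices in $U_\infty$ that are $\Z^{d\ast}$-adjacent to $\{b_j, b_{j+1}\}$. The core claim is that each $N_j$ is nonempty and $\Z^d$-connected, and that consecutive sets $N_j$ and $N_{j+1}$ share at least one vertex (namely, one $\Z^{d\ast}$-adjacent to $b_{j+1}$). Granted this, concatenating $\Z^d$-paths inside successive $N_j$'s, plus short edges from $u$ and $v$ into $N_0$ and $N_{m-1}$, produces a $\Z^d$-path in $\bdyvis \overline{A}$ connecting $u$ to $v$.

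\textbf{Main obstacle.} The hard part is the local claim on $N_j$: for lattice points $b, b'$ with $\|b - b'\|_\infty \leq 1$, the joint $\Z^{d\ast}$-neighborhood restricted to $U_\infty$ must be $\Z^d$-connected, even when many of its potential members have been absorbed into $\overline{A}$. My strategy is to deduce this from the $\Z^d$-connectedness of $U_\infty$ together with the $\ast$-connectedness of $\overline{A}$: two $\Z^d$-components of $N_j$ that were separated inside the small cube around $\{b,b'\}$ could only be reconnected by a long detour through $U_\infty$, and such a detour would enclose either $b$ or $b'$ inside a finite region, contradicting the fact that $b,b' \in \overline{A}$ together with the maximality of $\overline{A}$ as the complement of the unbounded component. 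Making ``enclosure'' precise in $\Z^d$ is the technical crux; the cleanest route is a discrete Alexander-duality-type statement for $\ast$-connectivity versus nearest-neighbor connectivity in $\Z^d$, which is essentially the content of Kesten's original argument and would be invoked in one of the two forms cited.
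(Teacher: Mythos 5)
This lemma is not proved in the paper at all: it is imported verbatim from the literature (Lemma 2.23 of Kesten's first-passage percolation notes, or alternatively Tim\'ar's Theorem 4), so your proposal has to stand on its own, and as written it does not. Two concrete problems. First, your skeleton already breaks before the ``core claim'': you take an \emph{arbitrary} $\ast$-path $b_0,\dots,b_m$ through $\overline{A}$, but for a vertex $b_j$ in the interior of the filled set $\overline{A}$ (say the center of a large solid box) there is no vertex of $U_\infty$ that is $\Z^{d\ast}$-adjacent to $\{b_j,b_{j+1}\}$, so $N_j=\emptyset$ and the chain of sets you want to concatenate through simply has holes. Repairing this would force you to choose a path staying on the ``surface'' of $\overline{A}$ (vertices of $\overline{A}$ that are $\ast$-adjacent to $U_\infty$), and the $\ast$-connectedness of that surface is itself a statement of the same nature and difficulty as the lemma you are proving.

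Second, and more seriously, the step you identify as the crux --- that the restriction of the joint $\ast$-neighborhood of an edge to $U_\infty$ is nearest-neighbor connected --- is exactly where all the content of Kesten's lemma lives, and your plan for it is circular: you say the cleanest route is a ``discrete Alexander-duality-type statement \dots which is essentially the content of Kesten's original argument and would be invoked in one of the two forms cited.'' Invoking Kesten's Lemma 2.23 or Tim\'ar's theorem at that point is assuming the result (or an equivalent of it), not proving it. The heuristic you offer in its place --- that a long detour through $U_\infty$ reconnecting two local components would ``enclose'' $b$ or $b'$ in a finite region --- is an intrinsically two-dimensional (Jordan-curve) picture: in $d\geq 3$ a path in the complement encloses nothing, so no contradiction with the maximality of $\overline{A}$ arises this way. (By contrast, your preliminary reduction is fine: filling in the bounded components does give $\bdyvis A=\bdyvis\overline{A}$, though even the inclusion $\bdyvis\overline{A}\subseteq\bdyvis A$ needs a short argument --- a nearest-neighbor path inside the unit box spanned by $x$ and its $\ast$-neighbor $y\in\overline{A}\setminus A$ must cross $A$ --- which you did not supply.) So the proposal reduces the lemma to itself rather than proving it; an honest proof needs the genuinely combinatorial/topological input of Kesten's or Tim\'ar's argument.
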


For a finite set $A \subset \Z^d$ and $x \in A$, we say that $x$ is exposed in $A$ if $\Es_A (x) > 0$. The following proposition states that, if a clustering $\CC$ is sufficiently separated and if an element is exposed in $\CC^i$, then that element is also exposed in $\cup_j \CC^j$.

\begin{proposition}\label{help for at pairs}
Fix $d \geq 5$ and $n \geq 4$. Let $a, b > 0$, and let $\CC \in \Clust_{a,b}^\bullet$. If $a \geq \max\{e^{2b},8\sqrt{d}\}$, then
\begin{align*}
\Es_{\CC^i} (x) > 0 \implies \Es_{\cup_j \CC^j} (x) > 0, \quad x \in \Z^d.
\end{align*}
\end{proposition}

This conclusion is not surprising as, when the separation between clusters is large relative to their diameters, the clusters cannot surround another cluster so as to disconnect it from $\infty$. The proof uses Lemma~\ref{kesten result} to identify a path from an element that is exposed in one cluster to the boundary of a set that contains the union of clusters.

\begin{proof}[Proof of Proposition \ref{help for at pairs}]
Let $x$ be exposed in $\CC^i$ and denote the union of clusters by $U = \cup_j \CC^j$. To prove that $x$ is exposed in $U$, it suffices to show that there is a path from $x$ to the boundary of 
\[
F = \{ z \in \Z^d: \dist (z, U ) \leq 2 \diam ( U ) \},
\]
which otherwise lies outside of $U$. 

Because $x$ is exposed in $\CC^i$, there is a path $\Gamma$ from $x$ to $\bdy F$, which otherwise lies outside of $\CC^i$. We modify $\Gamma$ to obtain a path that otherwise lies outside of the rest of $U$ as well. To this end, let 
\[ 
F_j = \big\{ z \in \Z^d: \dist (z,\CC^j) \leq \diam (\CC^j) \big\}, \quad 1 \leq j \leq \# \CC.
\]
We make use of two facts about the $F_j$.

{\bf Fact 1}.\ Each $F_j$ is finite and $\ast$-connected, so each $\bdyvis F_j$ is connected by Lemma \ref{kesten result}.

{\bf Fact 2}.\ If $\sep (\CC)$ is at least $\max\{e^{2b},8\sqrt{d}\}$, then each $\bdyvis F_j$ is disjoint from $\cup_k F_k$. To see why, note that any element of $\bdyvis F_j$ is within $\sqrt{d}$ of an element of $F_j$, while the distance between distinct $F_j$ and $F_k$ exceeds $\sqrt{d}$:
\begin{align*}
\dist (F_j,F_k) & \geq \dist(\CC^j,\CC^k) - \diam (\CC^j) - \diam (\CC^k)  - 2\sqrt{d}\\ 
& \geq \dist (\CC^j,\CC^k) \Big( 1 - \frac{2b \log \dist (\CC^j,\CC^k) + 2\sqrt{d}}{\dist (\CC^j,\CC^k)} \Big) > \sqrt{d}. 
\end{align*}
The first inequality follows from the triangle inequality; the second from the fact that $\CC$ is a $(\cdot,b)$ clustering; the third from the fact that the ratio is decreasing in $\dist (\CC^j, \CC^k)$, which is at least $\sep (\CC)$, and some simple algebra using the assumption that $\sep (\CC) \geq \max\{e^{2b},8\sqrt{d}\}$.

Assume that $\sep (\CC) \geq \max\{e^{2b},8\sqrt{d}\}$. We will keep the part of $\Gamma$ from $x$ until it first encounters $\bdyvis F_i$, which otherwise avoids $\cup_k F_k$ by assumption. We denote by $J$ the set of labels of the $F_j$ subsequently hit by $\Gamma$. If $J$ is empty, then we are done. Otherwise, let $\ell$ be the label of the first of the $F_j$ that $\Gamma$ hits, and let $\Gamma_u$ and $\Gamma_v$ be the first and last elements of $\Gamma$ which intersect $\bdyvis F_\ell$. By Fact 1, $\bdyvis F_\ell$ is connected, so there is a shortest path $\Lambda$ in $\bdyvis F_\ell$ from $\Gamma_u$ to $\Gamma_v$. We then edit $\Gamma$ to form $\Gamma'$ as
\[ \Gamma' = \big( \Gamma_1, \dots, \Gamma_{u-1}, \Lambda_1, \dots, \Lambda_{|\Lambda|}, \Gamma_{v+1}, \dots, \Gamma_{|\Gamma|} \big).\] 
Because $\Gamma_{v+1}$ was the last element of $\Gamma$ which intersected $\bdyvis F_\ell$, $\Gamma'$ avoids $F_\ell$. By Fact 2, $\Lambda$ avoids $\cup_k F_k$, so if $J'$ is the set of labels of $F_j$ encountered by $\Gamma'$, then $|J'| \leq |J|-1$.

If $J'$ is empty, then we are done. Otherwise, we can relabel $\Gamma$ to $\Gamma'$ and $J$ to $J'$ in the preceding argument to continue inductively, obtaining $\Gamma''$ and $|J''| \leq |J|-2$, and so on. Because $|J| \leq |U|$, we need to modify the path at most $|U|$ times before the resulting path to $\bdy F$ does not return to $\cup_k F_k$ after reaching $\bdyvis F_i$. In summary, we edited $\Gamma$ to obtain a path from $x$ to $\bdyvis F_i$ and then from $\bdyvis F_i$ to $\bdy F$, which otherwise avoids $U$. We conclude that $x$ is exposed in $U$.
\end{proof}

The next result is an equivalence between the possible transitions of HAT and IHAT. It states that, if $a$ is large enough in terms of $b$ and $d$, then a transition between two $(a,b)$ DOT clusterings is possible for IHAT if and only if it is possible for the natural partitioning of HAT. We state the result in terms of $p_{U_t} (x,y)$ and $q_{\VV_t} (i,x,y)$, defined in \eqref{eq tps0} and \eqref{eq tps q}.

\begin{proposition}[Equivalence between possible transitions]\label{at pairs}
Fix $d \geq 5$ and $n \geq 4$. Let $a,b > 0$, let $\CC_0, \CC_1 \in \Clust_{a,b}^\bullet$, and denote $U_0 = \cup_j \CC_0^j$. If $a$ is at least $\max\{e^{2b},8\sqrt{d}\}$,  
then, for any $x \in U_0$ and $y \in \Z^d$ such that 
\begin{equation}\label{c0 to c1 equiv}
\CC_1 = (\CC_0 \sd{[x]_{\CC_0}} \{x\}) \cup^{[x]_{\CC_0}} \{y\},
\end{equation}
the following equivalence holds
\begin{equation}\label{corresp}
p_{U_0} (x,y) > 0 \iff q_{\CC_0} ([x]_{\CC_0},x,y) > 0.
\end{equation}
\end{proposition}

\begin{proof}[Proof of Proposition \ref{at pairs}]
Let $a,b$ and $\CC_0, \CC_1$ satisfy the hypotheses, let $x \in U_0$ and $y \in \Z^d$ satisfy \eqref{c0 to c1 equiv}, and denote $U_1 = \cup_j \CC_1^j$ and $i = [x]_{\CC_0}$. Observe that 
\begin{subnumcases}{p_{U_0} (x,y) > 0 \iff}
\Es_{U_0} (x) > 0, \label{p.1}\\ 
\Es_{U_1} (y) > 0, \,\,\text{and} \label{p.2}\\ 
y \in \bdy \big( U_0 {\setminus} \{x\} \big). \label{p.3}
\end{subnumcases}

Analogously, 
\begin{subnumcases}{q_{\CC_0} (i,x,y) > 0 \iff}
\Es_{\CC_0^i} (x) > 0, \label{q.1}\\ 
\Es_{\CC_1^i} (y) > 0, \,\,\text{and} \label{q.2}\\ 
y \in \bdy \big( \CC_0^i {\setminus} \{x\} \big). \label{q.3}
\end{subnumcases}
We claim that 
\[
\eqref{p.1} \iff \eqref{q.1}, \,\, \eqref{p.2} \iff \eqref{q.2}, \,\,\text{and}\,\, \eqref{p.3} \iff \eqref{q.3},
\]
which together imply \eqref{corresp}. We address the forward implications first.

\textbf{Forward implications}. Because $x \in \CC_0^i \subseteq U_0$ and $y \in \CC_1^i \subseteq U_1$, 
\[
\Es_{U_0} (x) \leq \Es_{\CC_0^i} (x) \,\,\text{and}\,\, \Es_{U_1} (y) \leq \Es_{\CC_1^i} (y),
\] hence $\eqref{p.1} \implies \eqref{q.1}$ and $\eqref{p.2} \implies \eqref{q.2}$. Next, because $\sep (\CC_0) \geq a > 1$, we have
\[
\bdy \big( U_0 {\setminus} \{x\} \big) = \bdy \Big( \bigcup_i \CC_0^i {\setminus} \{x\} \Big) = \bigcup_i \bdy \big( \CC_0^i {\setminus}\{x\} \big).
\]
Consequently, \eqref{p.3} implies that $y \in \bdy \big( \CC_0^j {\setminus} \{x\} \big)$ for at least one choice of $j$. In fact, $j = i$ is the only choice that works as, otherwise, we would have $\sep (\CC_1) = 1$. We conclude that $\eqref{p.3} \implies \eqref{q.3}$.

\textbf{Reverse implications}. Because $\CC_0$ and $\CC_1$ are $(a,b)$ separated for an $a$ which is at least $\max\{e^{2b},8\sqrt{d}\}$, Proposition \ref{help for at pairs} applies with $\CC_0$ or $\CC_1$ in the place of $\CC$. Using it, we conclude that $\eqref{p.1} \impliedby \eqref{q.1}$ and $\eqref{p.2} \impliedby \eqref{q.2}$. The argument we used for the last forward implication applies in reverse to show that $\eqref{p.3} \impliedby \eqref{q.3}$. 
\end{proof}

\subsection{Proof of the one-step approximation of HAT by IHAT}

The proof is an application of the activation and transport comparisons (Propositions~\ref{prop h comp} and \ref{prop t comp}). We also need Proposition~\ref{at pairs}, to address a minor technical point.

\begin{proof}[Proof of Proposition \ref{p to q}]
Recall the constant $\kappa$ that appears in the statements of Propositions~\ref{prop h comp} and \ref{prop t comp}. Let $b > 0$ and $a \geq \max \{e^{2b},8\sqrt{d}, \kappa\}$, let $\CC_0, \CC_1 \in \Clust_{a,b}^\bullet$, and denote $U_0 = \cup_j \CC_0^j$.

By the definition of the natural partitioning of HAT with $\CC_0$ (Definition~\ref{nat part}),
\begin{equation}\label{sum puo}
\PP_{\CC_0} (\UU_1 = \CC_1) = \sum p_{U_0} (x,y),
\end{equation}
where the sum ranges over $x \in U_0$ and $y \in \Z^d$ that satisfy $\CC_1 = (\CC_0 \sd{[x]_{\CC_0}} \{x\}) \cup^{[x]_{\CC_0}} \{y\}$. According to Proposition~\ref{at pairs}, since $\sep (\CC_0)$ is at least $\max \{e^{2b}, 8 \sqrt{d}\}$, the summand in \eqref{sum puo} is positive if and only if $q_{\CC_0} ([x]_{\CC_0},x,y)$ is. Moreover, since $\CC_0$ is a partition of $U_0$, $q_{\CC_0} (j,x,y) = 0$ for every $j \neq [x]_{\CC_0}$. Hence,
\begin{equation}\label{sum quo}
\QQ_{\CC_0} (\VV_1 = \CC_1) = \sum q_{\CC_0} ([x]_{\CC_0},x,y),
\end{equation}
where the sum ranges over the same $x$ and $y$ as in \eqref{sum puo}.

Since $\CC_0$ is a DOT clustering with a separation of at least $a \geq \kappa$, Propositions~\ref{prop h comp} and \ref{prop t comp} together imply that there is a constant $c = c (b,d,n)$ such that 
\begin{equation}\label{pq comp}
p_{U_0} (x,y) \geq \left(1 - c \left( a^{-1} \log a \right)^{d-2} \right) q_{\CC_0} ([x]_{\CC_0},x,y).
\end{equation}
 We apply this bound to \eqref{sum puo} and then use \eqref{sum quo} to conclude that
\[
\PP_{\CC_0} (\UU_1 = \CC_1) \geq \left(1 - c \left( a^{-1} \log a \right)^{d-2} \right) \QQ_{\CC_0} (\VV_1 = \CC_1).
\]
\end{proof}

\section{A random walk related to cluster separation}\label{ref times}

Proposition \ref{pe to qe} allows us to bound the probability in \eqref{eq inf xi} of Proposition~\ref{inf xi} with the corresponding probability under IHAT. The purpose of this section and Section~\ref{sec clust sep} is to show that this probability is strictly positive under IHAT. Our strategy is to view each pair of DOTs at the consecutive times at which both clusters form line segments parallel to $e_1$. When viewed at these times, the difference between the clusters' elements with the smallest $e_1$ components is a random walk in $\Z^d$, albeit not a simple one. In this section, we define these random walks and prove some preliminary results about them.


\subsection{Definitions}\label{sub rw def}

Fix $d \geq 5$ and $n \geq 4$. Recall that $\Refcon$ denotes the collection of reference dimers and trimers
\[
\Refcon = \left\{ \{x,x+e_1\}: x \in \Z^d\right\} \cup \left\{\{x, x+e_1, x+2e_1\}: x \in \Z^d\right\}
\]
and $\Refcon^\times$ denotes the collection of tuples of such configurations. 
 We inductively define the {\em references times} of IHAT $(\VV_t)_{t \geq 0}$ by $\xi_0 = 0$ and
\[
\xi_m = \inf\left\{t > \xi_{m-1}: \VV_t \in \Refcon^\times \right\}, \quad m \geq 1.
\]

As a representative element of each cluster $\VV_t^i$, we arbitrarily choose the element of $\VV_t^i$ that is least in the lexicographic order on $\Z^d$ and denote it by $M_t^i$. For example, if $\VV_t^i \in \Refcon$, then $M_t^i$ is the element with the least $e_1$ component. We use pairs of these representative elements to define random walks.

For every distinct pair of clusters $i, j \in \{1, \dots, \# \VV_0\}$ with $i < j$, we define a random walk $(S_k^{i,j})_{k \geq 0}$ by 
\begin{equation}\label{rw assoc w}
S_0^{i,j} = M_{\xi_0}^i - M_{\xi_0}^j \quad \text{and} \quad S_k^{i,j} = S_{k-1}^{i,j} + \left( M_{\xi_k}^i - M_{\xi_k}^j - \left( M_{\xi_{k-1}}^i + M_{\xi_{k-1}}^j\right) \right), \quad k \geq 1.
\end{equation}
We refer to the collection $\{(S_k^{i,j})_{k \geq 0}, 1 \leq i < j \leq \# \VV_0\}$ as the random walks associated with IHAT.

\subsection{Cluster separation and the distance between their representative elements}\label{sec sep to dist}

We will need to convert the distance between distinct clusters $\VV_t^i$ and $\VV_t^j$ of IHAT to the distance between their representative elements $M_t^i$ and $M_t^j$, and vice versa. The triangle inequality implies that
\begin{equation}\label{tri app both}
\left| \dist (\VV_t^i, \VV_t^j) - \| M_t^i - M_t^j\| \right| \leq \diam (\VV_t^i) + \diam (\VV_t^j), \quad t \geq 0.
\end{equation}

We can combine \eqref{tri app both} with two basic facts about IHAT. First, at most one cluster diameter and representative element $M_t^i$ change with each step and, when they do, they change by at most $1$ and $\diam (\VV_t^i)$, respectively. In other words, for $t \geq 0$,
\begin{align}
\|M_{t+1}^i - M_t^i\| + \|M_{t+1}^j - M_t^j\| &\leq \max \{ \diam (\VV_t^i), \diam (\VV_t^j) \}, \label{sep by one}\\
\diam (\VV_{t+1}^i) + \diam (\VV_{t+1}^j) &\leq \diam (\VV_t^i) + \diam (\VV_t^j) + 1. \label{diam by one}
\end{align}
We use these facts to prove a further result in the spirit of \eqref{tri app both}.

\begin{lemma}\label{amlg2}
If $t \geq 0$ satsfies $\xi_m \leq t < \xi_{m+1}$, then
\begin{equation}\label{eq amlg2}
\dist (\VV_t^i, \VV_t^j) \geq \big\| S_m^{i,j} \big\| - 6 \big( \xi_{m+1} - \xi_m \big)^2.
\end{equation}
\end{lemma}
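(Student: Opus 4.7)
The plan is to combine Lemma~\ref{amlg}, which controls how far the vector $M_t^i - M_t^j$ can have drifted from $M^i_{\xi_m} - M^j_{\xi_m}$ on the interval $[\xi_m,\xi_{m+1})$, with Lemma~\ref{sep to dist}, which translates a lower bound on $\|M_t^i-M_t^j\|$ into a lower bound on $\dist(E_t^i,E_t^j)$ provided the cluster diameters are under control. The only input not already recorded explicitly is a diameter estimate valid on all of $[\xi_m,\xi_{m+1})$, but this is exactly the step already appearing as \eqref{amlg bd1} in the proof of Lemma~\ref{amlg}: both $E^i_{\xi_m}$ and $E^j_{\xi_m}$ lie in $\Refcon$ and therefore have diameter at most $2$, while a single HAT step can increase a cluster's diameter by at most one (the transported element is added adjacent to the remainder of its cluster). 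This yields $\max\{\diam(E_t^i),\diam(E_t^j)\} \leq (t-\xi_m)+2$ for every $t \in [\xi_m,\xi_{m+1})$.

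Write $\delta = \xi_{m+1}-\xi_m \geq 1$ and $\eta = t - \xi_m \in \{0,1,\dots,\delta-1\}$. Substituting the sharper intermediate bound $\|(M_t^i-M_t^j) - (M^i_{\xi_m}-M^j_{\xi_m})\| \leq 6(t-\xi_m)^2$ extracted inside the proof of Lemma~\ref{amlg} together with $r = \eta + 2$ into Lemma~\ref{sep to dist} gives
\[
\dist(E_t^i,E_t^j) \;\geq\; \frac{1}{6}\bigl\|M^i_{\xi_m} - M^j_{\xi_m}\bigr\| - \eta^2 - \frac{4\eta + 8}{3}.
\]
To recover \eqref{eq amlg2} it therefore suffices to check $\eta^2 + \tfrac{4\eta+8}{3} \leq 3\delta^2$, equivalently $9\delta^2 \geq 3\eta^2 + 4\eta + 8$. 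Since the right-hand side is increasing in $\eta$ and $\eta \leq \delta - 1$, the worst case reduces to $9\delta^2 \geq 3(\delta-1)^2 + 4(\delta-1)+8$, i.e.\ $6\delta^2 + 2\delta - 7 \geq 0$, which holds for every $\delta \geq 1$.

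No substantive obstacle arises: the proof is a routine chaining of the two preceding lemmas followed by an elementary quadratic verification that exploits the generous $3\delta^2$ slack in \eqref{eq amlg2} to absorb both the $\eta^2$ drift term and the linear diameter correction. The only small subtlety is to keep track of $t-\xi_m$ versus $\xi_{m+1}-\xi_m$: using the stated $6(\xi_{m+1}-\xi_m)^2$ directly from Lemma~\ref{amlg} is too lossy when $t=\xi_m$, so one should invoke the sharper $6(t-\xi_m)^2$ that the same calculation establishes.
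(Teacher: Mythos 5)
Your proof is correct and follows the same basic route as the paper: apply Lemma~\ref{sep to dist} with the center-of-mass drift bound from Lemma~\ref{amlg} and the diameter bound \eqref{amlg bd1}. The one genuine difference is that you keep the sharper $6(t-\xi_m)^2$ drift bound and $r = (t-\xi_m)+2$ diameter bound (both functions of $\eta = t-\xi_m$ rather than of $\delta = \xi_{m+1}-\xi_m$), whereas the paper immediately coarsens both to $\delta$. Your careful bookkeeping actually repairs a small slip in the paper's final display: the paper's chain $-6\delta^2 - 8(\delta+2) \geq -16\delta^2$ requires $10\delta^2 \geq 8\delta+16$, which fails at $\delta = 1$ (indeed even the weaker requirement $-6\delta^2-8(\delta+2) \geq -18\delta^2$ fails there). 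That case is nevertheless fine, since for $\delta=1$ one must have $\eta=0$, and then the drift term vanishes and only the $r=2$ diameter correction remains, which is absorbed by the $-3$ in \eqref{eq amlg2}. Your worst-case quadratic check $6\delta^2 + 2\delta - 7 \geq 0$ for $\delta \geq 1$ handles this uniformly, so your version is, if anything, cleaner than the paper's.
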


\begin{proof}
Assume that $\xi_m \leq t < \xi_{m+1}$. By \eqref{tri app both},
\begin{equation}\label{tri app}
\dist (\VV_t^i, \VV_t^j) \geq \|M_t^i - M_t^j\| - \diam (\VV_t^i) - \diam (\VV_t^j).
\end{equation}
To replace $M_t^i - M_t^j$ by $S_m^{i,j} = M_{\xi_m}^i - M_{\xi_m}^j$ in \eqref{tri app}, we bound the norm of their difference:
\begin{equation}\label{telesc}
M_t^i - M_t^j - (M_{\xi_m}^i - M_{\xi_m}^j) = \sum_{s = \xi_m}^{t-1} \left( M_{s+1}^i - M_{s+1}^j - (M_s^i - M_s^j) \right).
\end{equation}
Taking the norm of both sides and applying \eqref{sep by one} gives
\begin{align*}
\| M_t^i - M_t^j - (M_{\xi_m}^i - M_{\xi_m}^j) \| &\leq \sum_{s = \xi_m}^{t-1} \max \{\diam (\VV_s^i), \diam (\VV_s^j)\}\\ 
&\leq \sum_{s = \xi_m}^{t-1} (2+s-\xi_m) \leq \frac12 (t - \xi_m)^2 \leq \frac12 (\xi_{m+1} - \xi_m)^2.
\end{align*}
The second inequality holds because cluster diameter grows by at most one with each step and because, at time $\xi_m$, the clusters have diameters of at most $2$, since they belong to $\Refcon$. 
The fourth inequality holds because $t < \xi_{m+1}$.

The preceding bound addresses the first term of \eqref{tri app}. To address the other terms, we use \eqref{diam by one}:
\[
\diam (\VV_t^i) + \diam (\VV_t^j) \leq (\xi_{m+1} - \xi_m) + \diam (\VV_{\xi_m}^i) + \diam (\VV_{\xi_m}^j) \leq 5 (\xi_{m+1} - \xi_m).
\] 
The second inequality holds because $\xi_{m+1} - \xi_m \geq 1$ and because, at time $\xi_m$, the clusters have diameters of at most $2$.

Substituting the preceding bounds into \eqref{tri app} gives
\[
\dist (\VV_t^i, \VV_t^j) \geq \big\| S_m^{i,j} \big\| - \frac12 (\xi_{m+1} - \xi_m)^2 - 5 (\xi_{m+1} - \xi_m).
\]
The claimed bound \eqref{eq amlg2} follows from $\xi_{m+1} - \xi_m \geq 1$.
\end{proof}

\subsection{The reference times of IHAT have exponential tails}\label{exp tails}

The next result uses Lemma~\ref{hm small parts} to show that the references times $(\xi_m)_{m \geq 1}$ have exponentially small tails under $\PPprod$.

\begin{lemma}\label{xi tail}
Fix $d \geq 5$ and $n \geq 4$. There is a constant $\gamma_1 = \gamma_1 (d,n) > 0$ such that, if $\CC_0 \in \Conf_{d,n}^\times$ satisfies $|\CC_0^i| \in \{2,3\}$ for every $1 \leq i \leq \#\CC_0$, then 
\begin{equation}\label{eq xi tail}
\QQ_{\CC_0} \big( \xi_1 > t \big) \leq 2e^{-\gamma_1 t}.
\end{equation}
\end{lemma}

This result applies to any tuple of configurations $\CC_0$ in which each configuration has two or three elements. In particular, $\CC_0$ does not need to satisfy a separation lower bound and it does not need to belong to $\Refcon^\times$.

\begin{proof}[Proof of Lemma~\ref{xi tail}]
Lemma~\ref{hm small parts} states that we can activate any element of any cluster of $\VV_t$ with a probability of at least $(16n)^{-1}$, since each cluster has two or three elements under $\QQ_{\CC_0}$. To arrange a cluster of $\VV_t$ into a reference cluster, we first ensure that it is connected, possibly by activating one or two isolated elements, and then we dictate up to two additional IHAT steps to organize the connected cluster into a line segment parallel to $e_1$. After addressing the first cluster of $\VV_t$, we apply the same procedure to the rest of the clusters.

Consider an arbitrary cluster $\VV_t^i$. The following procedure shows that $\VV_{t+4}^i \in \Refcon$ with a probability of at least $p^4$, where $p = (16n)^{-1} (2d)^{-4}$.
\begin{enumerate}
\item If there is an isolated element of $\VV_t^i$, activate it (w.p.\ $\geq (16n)^{-1}$). Otherwise, ``keep'' the current cluster by transporting to wherever activation occurs (w.p.\ $\geq (16n\cdot 2d)^{-1}$). Repeat this step to ensure that the resulting cluster is connected.
\item Once the cluster $\VV_{t+2}^i$ is connected, if it does not belong to $\Refcon$, activate any element with the least $e_1$ component and transport it to $x+e_1$, where $x$ is any element of $\VV_{t+2}^i$ with the greatest $e_1$ component (w.p.\ $\geq (16n)^{-1} (2d)^{-4}$). Otherwise, keep the current cluster (w.p.\ $\geq (16n \cdot 2d)^{-1})$. Repeat this step to ensure that $\VV_{t+4}^i$ belongs to $\Refcon$ (i.e., equals $\{x,x+e_1\}$ or $\{x,x+e_1,x+2e_1\}$ for some $x \in \Z^d$).
\end{enumerate}
The factors of $(16n)^{-1}$ arise from the use of Lemma~\ref{hm small parts}; factors of $(2d)^{-1}$ arise from dictating random walk steps during the transport component of the dynamics.

We can apply this procedure to the rest of the clusters to ensure that $\VV_{t+4\#\CC_0} \in \Refcon^\times$. Note that $\# \CC_0$ is at most $\frac{n}{2}$. This implies that
\[ \PPprod_{\CC_0} \big( \xi_1 > t + \tfrac{n}{2} \bigm\vert \xi_1 > t \big) \leq q\]
where $q = 1 - p^{n/2}$. Continuing inductively, we find that
\[ \PPprod_{\CC_0} \big( \xi_1 > t \big) \leq q^{[\frac{2t}{n}]} \leq q^{-1} e^{- \gamma_1 t}  \]
for $\gamma_1 = \tfrac{2}{n} \log \tfrac{1}{q}$. Since $q^{-1} \leq 2$, this implies \eqref{eq xi tail}.
\end{proof}

\subsection{Two standard estimates for random walks associated with IHAT}\label{standard ests}

Our analysis of the random walks $(S_k^{i,j})_{k \geq 0}$ associated with IHAT relies on two standard estimates. We state these estimates and then explain why they apply.

Define the first hitting time of a set $A \subseteq \Z^d$ by the random walk $(S_k^{i,j})_{k \geq 0}$ associated with IHAT by 
\[
T^{i,j}_{A} = \inf\{k \geq 0: S_k^{i,j} \in A\}.
\]
Additionally, denote by $B(r) = \{z \in \Z^d: \|z \| < r\}$ the discrete Euclidean ball of radius $r > 0$ centered at the origin.

By Proposition 2.4.5 of \cite{lawler2010random}, there are constants $\gamma_2, \gamma_3 > 0$ such that for every $\CC_0 \in \Refcon^\times$, every distinct pair $i < j$ of clusters in $\CC_0$, and every $r, \lambda > 0$, 
\begin{equation}\label{esc time}
\QQ_{\CC_0} (T^{i,j}_{B(r)^\cc} > \lambda r^2 ) \leq \gamma_2 e^{-\gamma_3 \lambda }.
\end{equation} 
Furthermore, by Proposition 6.4.2 of \cite{lawler2010random}, there are constants $\gamma_4, \gamma_5 \geq 1$ such that, if $r \geq \gamma_4$ and if $x = S_0^{i,j} \notin B(r)$ under $\QQ_{\CC_0}$, then
\begin{equation}\label{avoid}
\QQ_{\CC_0} \big( T^{i,j}_{B(r)} < \infty \big) \leq \gamma_5 \left(\frac{r}{\|x\|}\right)^{d-2}.
\end{equation}

These estimates apply because $(S_k^{i,j})_{k \geq 0}$ is a symmetric, aperiodic, and irreducible random walk on $\Z^d$, with increment norms $\|S_{k+1}^{i,j} - S_k^{i,j}\|$ that are exponentially tight. First, it is symmetric because the transition probabilities of IHAT are translation invariant. Second, it is aperiodic because, if $\VV_t \in \Refcon^\times$, then $\VV_{t+1} = \VV_t$ with positive probability. This is true because, if $\VV_t \in \Refcon^\times$, then every element $x \in \VV_t^1$ can be activated with positive probability and has a neighbor, hence it can be transported to its site of activation with positive probability. Third, it is not hard to see that, for any $l \in \{1,\dots,d\}$, $S_1^{i,j} = S_0^{i,j} + e_l$ with positive probability, which implies that this random walk is irreducible on $\Z^d$. Fourth, the norm of the increment $\|S_{k+1}^{i,j} - S_k^{i,j} \|$ is exponentially tight because it is at most $\xi_{k+1}^{i,j} - \xi_k^{i,j}$, which is exponentially tight by Lemma~\ref{xi tail}.

In fact, the results cited from \cite{lawler2010random} assume that the increments of the walk are bounded, and the constants $\gamma_2$ through $\gamma_5$ may in general depend on the increment distribution. However, the proofs of these results apply as written to the case when the norms of the increments are exponentially tight, with the exception that the reference to Proposition 4.3.1 in the proof of Proposition 6.4.2 must be replaced by a reference to Proposition 4.3.5 (all of these are results in \cite{lawler2010random}). Additionally, because there are only three possible increment distributions (corresponding to dimer-dimer, dimer-trimer, and trimer-trimer), we can assume that $\gamma_2$ through $\gamma_5$ are the same for all clusters.

\section{Growth of cluster separation under HAT and IHAT}\label{sec clust sep}

The purpose of this section is to prove Proposition \ref{inf xi}, which states that there is a positive probability that the natural clustering of HAT satisfies the separation condition \eqref{dot conds} of Theorem~\ref{fate}, so long as the initial clustering is an $(a,b)$ separated DOT clustering for sufficiently large numbers $a$ and $b$. We do so by establishing the same result for IHAT and then invoking Proposition~\ref{pe to qe}. We establish the result for IHAT by applying standard random walk estimates \eqref{esc time} and \eqref{avoid} to the random walks associated with IHAT, and then translating these results into analogous conclusions about the separation of the clusters, using Lemma~\ref{amlg2}.

\subsection{Definitions of key quantities and events}

Throughout this section, we fix $d \geq 5$ and $n \geq 4$. To state the main results, we need to define several events, which formalize the following picture. Starting from a DOT clustering $\CC_0 \in \Refcon^\times$ with a separation of $a > 0$, we model the distance between two clusters $i < j$ of $\CC_0$ using the random walk $(S_k^{i,j})_{k \geq 0}$ associated with IHAT $(\VV_t)_{t \geq 0}$. We aim to observe the distance between these clusters double to $2a$ over roughly $(2a)^2$ steps of $S_k^{i,j}$, without dropping below, say, $2 \eps a$ for some $\eps \in (0,1)$. We then aim to observe the separation double again, over $(4a)^2$ steps of $S_k^{i,j}$, without dropping below $4 \eps a$, and so on. In fact, we will budget slightly more time to observe the doubling, and $\eps$ will become smaller as we observe more doublings.

We will use $\ell \geq 1$ to count the number of doublings. In terms of the initial separation $a$ of $\CC_0$, we will wait $t_\ell (a)$ (roughly $(2^\ell a)^2$) steps for the $\ell$\textsuperscript{th} doubling, during which time the separation can decrease by at most a factor $\eps_\ell (a)$, which is roughly $(\ell \log a)^{-1}$. We will require that the number of steps between reference times is at most a quantity $\kappa_\ell (a)$, which is roughly $\log t_\ell (a)$. We define $t_0 (a) = 0$, 
\[
t_\ell (a) = [(n\ell \log a)^4 (2^\ell a)^2 ], \quad \eps_\ell = (n \ell \log (a))^{-1}, \quad \text{and} \quad \kappa_\ell (a) = \beta \log (n \ell t_\ell (a)), \quad \ell \geq 1,
\]
in terms of a constant $\beta > 0$ that we will later choose in terms of $\gamma_1$ from Lemma~\ref{xi tail}. Here, $[r]$ denotes the integer part of a real number $r$. 

We define four events for each $\ell \geq 1$. We aim to observe that:
\begin{enumerate}
\item The clusters become separated by $2^\ell \eps_\ell^{-1} a$ by time $t_\ell$.
\item The separation remains above $2^{\ell-1} \eps_\ell a$ during $\{t_{\ell-1}, \dots, t_\ell\}$.
\item The separation remains above $2^\ell a$ during $\{\sigma_\ell^{i,j}, \dots, t_\ell\}$, where
\[
\sigma_\ell^{i,j} = \inf \{ s \geq t_{\ell-1}: \dist (\VV_s^i, \VV_s^j) \geq 2^\ell \eps_\ell^{-1} a\}. 
\]
\item Consecutive reference times between $\xi_{N(t_{\ell-1})}$ and $\xi_{N(t_\ell)+1}$ never differ by more than $\kappa_\ell$, where $N(t) = |\{m \geq 1: \xi_m \leq t\}|$ denotes the number of returns to $\Refcon^\times$ by time $t$.
\end{enumerate}

\begin{figure}[htbp]
\centering {\includegraphics[width=0.75\linewidth]{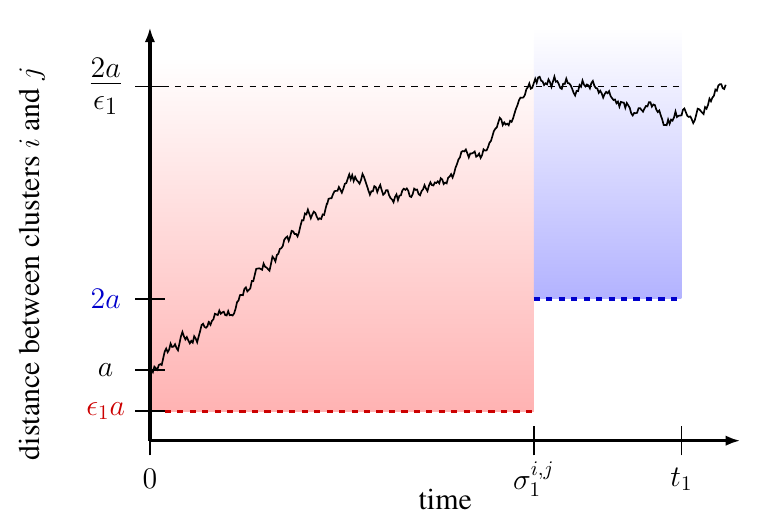}}
\caption[An occurrence of $\cap_{k=1}^3 \GG^{i,j}_k (\ell)$]{An occurrence of $\cap_{k=1}^3 \GG^{i,j}_k (1)$. 
}
\label{fig: eventfig}
\end{figure}

These descriptions correspond to the following events (Figure \ref{fig: eventfig}): 
\begin{align*}
\GG_1^{i,j} (\ell) &= \big\{ \sigma^{i,j}_\ell \leq t_\ell \big\},\\ 
\GG_2^{i,j} (\ell) &= \big\{ \dist(\VV_s^i,\VV_s^j) \geq 2^{\ell-1} \eps_\ell a  \,\,\,\text{for}\,\,\, t_{\ell-1} \leq s \leq t_\ell \big\},\\
\GG_3^{i,j} (\ell) &= \big\{ \dist(\VV_s^i,\VV_s^j) \geq 2^\ell a \,\,\, \text{for} \,\,\, \sigma^{i,j}_\ell \leq s \leq t_\ell  \big\},\\ 
\GG_4 (\ell) &= \big\{ \xi_m - \xi_{m-1} \leq \kappa_\ell \,\,\, \text{for} \,\,\, N (t_{\ell-1}) < m \leq N(t_\ell) + 1\big\}.
\end{align*}
We denote their intersections as 
\[
\GG_k (\ell) = \cap_{i < j} \GG_k^{i,j} (\ell), \,\, 1 \leq k \leq 3; \quad \GG_k = \cap_{\ell \geq 1} \GG_k (\ell), \,\, 1 \leq k \leq 4; \quad \text{and} \quad \GG = \cap_{k=1}^4 \GG_k.
\]

\subsection{Proof of Proposition~\ref{inf xi}}

Recall $\Grow_{a,b,\delta}$, which consists of sequences of $(a,b)$ DOT clusterings $(\CC_t)_{t \geq 0}$ that satisfy $\sep (\CC_t) \geq t^{\frac12 - \delta}$ for every $t \geq 0$ \eqref{event grow}. Essentially, if we can show that IHAT belongs to $\Grow_{a,b,\delta}$ with positive probability, for some choice of parameters, then the main approximation result (Proposition~\ref{pe to qe}) will imply the same of the natural partitioning of HAT. This separation growth condition implies the one in Proposition~\ref{inf xi}.

The event $\GG$ is significant because, when it occurs and when $\VV_0 \in \Refcon^\times$, the sequence of IHAT states $(\VV_t)_{t \geq 0}$ satisfies the separation condition in $\Grow_{a,b,\delta}$ for some choice of parameters. This is the content of the next result. By the preceding discussion, this reduces the proof of Proposition~\ref{inf xi} to establishing that $\GG$ occurs with positive probability.

\begin{proposition}[Separation grows when $\GG$ occurs]\label{rel sep bd}
There exists $b = b(d,n) > 0$ such that, for any $\delta \in (0,\frac12)$, there exists $\alpha = \alpha (b,d,n,\delta)$, such that, if $a \geq \alpha$ then, 
\begin{equation}\label{g v ref}
\GG \cap \{\VV_0 \in \Refcon^\times\} \subseteq \left\{ (\VV_t)_{t \geq 0} \in \Grow_{\tilde a,b,\delta} \right\},
\end{equation}
where $\tilde a = a^{0.99}$.
\end{proposition}

The second main result of this section is a bound on $\QQ_{\CC_0} (\GG)$, which applies to every $\CC_0 \in \Refcon^\times$ that satisfies $\sep (\CC_0) \geq a$ for sufficiently large $a$. Note that such clusterings are necessarily $(a,b)$ DOT clusterings for every $b \geq 2 (\log a)^{-1}$ \eqref{two or three elems}, because their clusters are reference dimers and trimers, which have diameters of at most $2$.

\begin{proposition}[$\GG$ is typical for IHAT]\label{tl ntb}
There is a constant $\alpha = \alpha (b,d,n)$ such that every $\CC_0 \in \Refcon^\times$ with $\sep (\CC_0) \geq \alpha$ satisfies $\QQ_{\CC_0} (\GG) \geq \frac12$.
\end{proposition}

Together, the two preceding propositions and Proposition~\ref{pe to qe} imply Proposition~\ref{inf xi}.

\begin{proof}[Proof of Proposition \ref{inf xi}]
Fix $d \geq 5$ and $n \geq 4$, and let $b > 0$ be the constant from Proposition~\ref{rel sep bd}. It suffices to show that, for any $\delta \in (0,\frac12 - \frac{1}{d-2})$, there is $a > 0$ such that, if $W \in \Conf_{d,n}$ has a clustering $\mathcal{W} \in \Clust_{a,b}^\bullet (W) \cap \Refcon^\times$, then the natural clustering $(\UU_t)_{t \geq 0}$ of $(U_t)_{t \geq 0}$ with $\mathcal{W}$ satisfies
\[
\PP_{\mathcal{W}} \left( (\UU_t)_{t \geq 0} \in \Grow_{\tilde a, b, \delta} \right) \geq \frac14.
\]

To this end, fix $\varepsilon = \frac12$ in Proposition~\ref{pe to qe}, and let $\alpha > 1$ be the largest of the constants it names in Propositions~\ref{pe to qe}, \ref{rel sep bd}, and \ref{tl ntb}. Then, take $a = \alpha^{1.1}$, so that $\tilde a = a^{0.99} \geq \alpha$. By Propositions~\ref{pe to qe}, \ref{rel sep bd} and \ref{tl ntb}, we have
\[
\PP_{\mathcal{W}} \left( (\UU_t)_{t \geq 0} \in \Grow_{\tilde a, b, \delta} \right) \geq \frac12 \QQ_{\mathcal{W}} \left( (\VV_t)_{t \geq 0} \in \Grow_{\tilde a, b, \delta} \right) \geq \frac12 \QQ_{\mathcal{W}} ( \GG ) \geq \frac14.
\]
Respectively, the hypotheses of these three propositions are satisfied because $\mathcal{W} \in \Clust_{\tilde a,b}^\bullet$ for $\tilde a \geq \alpha$, because $b$ is as in Proposition~\ref{rel sep bd} and $a \geq \alpha$, and because $\mathcal{W} \in \Clust_{a,b}^\bullet \cap \Refcon^\times$ for $a \geq \alpha$.
\end{proof} 

We turn our attention to the proofs of Propositions~\ref{rel sep bd} and \ref{tl ntb}. 

\subsection{Proof of Proposition~\ref{rel sep bd}}

The proof shows that the inclusion \eqref{g v ref} holds when $\GG_2$ and $\GG_4$ occur.

\begin{proof}[Proof of Proposition~\ref{rel sep bd}]
We need to show that there is $b > 0$ such that, for any $\delta \in (0,\frac12)$, we can take $a$ sufficiently large to ensure that, when $\GG$ occurs, 
\[
\sep (\VV_s) \geq \tilde a, \quad \diam (\VV_s^i) \leq b \log \dist(\VV_s^i, \VV_s^j), \quad \text{and} \quad \sep (\VV_s) \geq s^{\frac12 - \delta},
\]
for every distinct pair of clusters $1 \leq i < j \leq \# \VV_0$ and for every $s \geq 0$. We address these three conditions in turn.

\begin{enumerate}
\item When $\GG_2$ occurs, $\sep (\VV_s)$ is at least $\eps_1 a$ for every $s \geq 0$. If $a$ is sufficiently large in terms of $n$, then $\sep (\VV) \geq \tilde a$.
\item Let $\ell \geq 1$ and $s \in [t_{\ell-1}, t_\ell]$. Recall that $N (t) = |\{m \geq 1: \xi_m \leq t\}|$ denotes the number of returns to $\Refcon^\times$ by time $t$. Since diameter grows at most linearly in time and since $\VV_{\xi_{N(s)}}^i \in \Refcon$ has a diameter of at most $2$,
\begin{equation}\label{xi diam bd}
\diam (\VV_s^i) \leq \xi_{N(s)+1} - \xi_{N(s)} + 2.
\end{equation}
We can obtain a further upper bound on $\diam (\VV_s^i)$ by noting that, when $\GG_4$ occurs, $\xi_{N(s)+1} - \xi_{N(s)}$ is at most $\kappa_\ell$, which is essentially $\log t_\ell$. Recall that $t_\ell \lesssim_n (2^\ell a)^2 \cdot (\ell \log a)^4$. Hence, when $a$ is sufficiently large,
\[
\diam (\VV_s^i) \lesssim_n \ell + \log a.
\]
On the other hand, when $\GG_2$ occurs, $\dist (\VV_s^i, \VV_s^j) \geq 2^{\ell-1} \eps_\ell a$, which implies that
\[
\log \dist(\VV_s^i,\VV_s^j) \gtrsim_n \ell + \log a.
\]
By comparing these bounds, we see that $\diam (\VV_s^i) \leq b \log \dist (\VV_s^i, \VV_s^j)$ holds for all sufficiently large $a$, so long as $b$ is large enough in terms of $n$.
\item Let $s \in [t_{\ell-1},t_\ell]$, in which case $s^{1/2 - \delta}$ is at most $t_\ell^{1/2 - \delta} \lesssim (2^\ell a)^{1-2\delta} \cdot (\ell \log a)^{2-4\delta}$. When $\GG_2$ occurs, $\sep (\VV_s) \gtrsim_n \frac{2^\ell a}{\ell \log a}$, hence $\sep (\VV_s)$ is larger by a factor of roughly $(2^\ell a)^{2\delta}$:
\[
\frac{\sep (\VV_s)}{s^{1/2-\delta}} \geq \frac{\sep (\VV_s)}{t_\ell^{1/2-\delta}} \gtrsim_n \frac{(2^\ell a)^{2\delta}}{(\ell \log a)^3}.
\]
Consequently, we can take $a$ sufficiently large in terms of $\delta$ and $n$ to ensure that $\sep (\VV_s) \geq s^{\frac12 - \delta}$ (Figure~\ref{fig: sqrtfig}).
\end{enumerate} 
\end{proof}

\begin{figure}[htbp]
\centering {\includegraphics[width=0.75\linewidth]{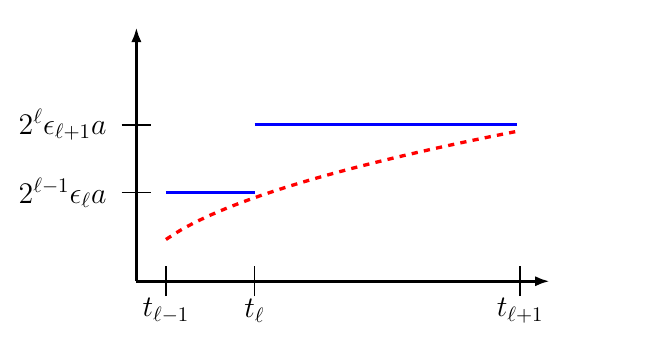}}
\caption[Growth of cluster separation]{When $\GG_2$ occurs, the separation of IHAT lies above the blue lines. The red dashed line is a lower bound of $s^{1/2-\delta}$ on cluster separation $\sep (\VV_s)$.}
\label{fig: sqrtfig}
\end{figure}

\subsection{Proof of Proposition~\ref{tl ntb}}

This subsection is devoted to a proof of the following result. To state it, we denote by $\mathscr{F}_t$ the $\sigma$-field generated by $(\VV_s)_{0 \leq s \leq t}$. Additionally, throughout this subsection, we use $\EE_\CC$ to denote expectation with respect to $\QQ_\CC$ instead of $\PP_\CC$.

\begin{proposition}\label{qg1}
There exists $\alpha = \alpha (d,n) > 0$ such that, if $\CC_0 \in \Refcon^\times$ satisfies $\sep (\CC_0) \geq \alpha$, then
\begin{equation}\label{eq qg1}
\PPprod_{\CC_0} \left( \GG (\ell)^\cc \mid \mathscr{F}_{\eta_{\ell-1}} \right) \1_{\GG(\ell-1)} \leq 8 \ell^{-2}, \quad \ell \geq 1,
\end{equation}
where $\eta_{\ell-1}$ denotes $\xi_{N(t_{\ell-1})+1}$.
\end{proposition}

Proposition~\ref{tl ntb} follows easily from Proposition~\ref{qg1}.

\begin{proof}[Proof of Proposition~\ref{tl ntb}]
Let $\alpha > 0$ be the constant named in Proposition~\ref{qg1}, let $\CC_0 \in \Refcon^\times$ satisfy $\sep (\CC_0) \geq \alpha$, and let $\ell \geq 1$. By conditioning on $\mathscr{F}_{\eta_{\ell-1}}$ and then applying Proposition~\ref{qg1}, we find that
\begin{equation*}
\PPprod_{\CC_0} \Big( \GG(\ell)^\cc \cap_{m=1}^{\ell-1} \GG (m) \Big) \leq \EE_{\CC_0} \, \Big[ \PPprod_{\CC_0} \big( \GG (\ell)^\cc \mid \mathscr{F}_{\eta_{\ell-1}} \big) \1_{\GG (\ell-1)}  \Big] \leq 8\ell^{-2}. 
\end{equation*} 
Consequently,
\[
\PPprod_{\CC_0} \Big( \big( \cap_{\ell=1}^\infty \GG (\ell) \big)^\cc \Big) = \sum_{\ell = 1}^\infty \PPprod_{\CC_0} \Big( \GG(\ell)^\cc \cap_{m=1}^{\ell-1} \GG (m) \Big) \leq \sum_{\ell=1}^\infty 8 \ell^{-2} < \frac12.
\]
\end{proof}

We will prove Proposition~\ref{qg1} in terms of events that are analogous to the $\GG_k^{i,j} (\ell)$, but which reference the random walks associated with IHAT instead of the distance between IHAT clusters, so that we can apply the random walk hitting estimates of Section~\ref{standard ests} to them. To convert between the two, we recall Lemma~\ref{amlg2}, which states that 
\[
\dist (\VV_s^i, \VV_s^j) \geq \big\| S_{m-1}^{i,j} \big\| - 6 \big( \xi_m - \xi_{m-1} \big)^2, \quad \xi_{m-1} \leq s < \xi_m.
\]
For this bound to be useful, we need the upper bound on $\xi_m- \xi_{m-1}$ that the occurrence of $\GG_4 (\ell)$ supplies,
\[
\xi_m - \xi_{m-1} \leq \log (n \ell t_\ell), \quad N (t_{\ell-1}) < m \leq N (t_\ell) + 1.
\]
By combining these two bounds, we see that
\[
\dist (\VV_s^i, \VV_s^j) \geq \|S_{N(s)}^{i,j}\| - 6 \kappa_\ell^2, \quad t_{\ell-1} \leq s \leq t_\ell.
\]
This motivates the definition of the following events, in terms of an analogue of $\sigma_\ell^{i,j}$:
\[
\rho_\ell^{i,j} = \inf\big\{m > N (t_{\ell-1}): S_m^{i,j} \notin B \big( 2^{\ell} \eps_\ell^{-1} a + 6\kappa_\ell^2 \big)\big\}, \quad \ell \geq 1.
\]
We also use $\kappa_\ell$ to define analogues of $\GG_k^{i,j} (\ell)$: 
\begin{align*}
\FF_1^{i,j} (\ell) &= \big\{ \rho_\ell^{i,j} \leq N (t_\ell) \big\},\\ 
\FF_2^{i,j} (\ell) &= \big\{ S_m^{i,j} \notin B \big( 2^{\ell-1} \eps_\ell a + 6\kappa_\ell^2 \big),\,\,N (t_{\ell-1}) \leq m \leq N (t_\ell) + 1 \big\},\\  
\FF_3^{i,j} (\ell) &= \big\{ S_m^{i,j} \notin B \big( 2^{\ell}a + 6\kappa_\ell^2 \big),\,\, \rho_\ell^{i,j} \leq m \leq N (t_\ell) + 1 \big\},
\end{align*}
and $\FF_4 (\ell) = \GG_4 (\ell)$. 
We denote their intersections as
\[
\FF_k (\ell) = \cap_{i<j} \FF_k^{i,j} (\ell), \,\, 1 \leq k \leq 3; \quad \FF_k = \cap_{\ell \geq 1} \FF_k (\ell), \,\, 1 \leq k \leq 4; \quad \text{and} \quad \FF = \cap_{k=1}^4 \FF_k.
\]

To prove Proposition~\ref{qg1}, it suffices to bound above the probability that $\FF (\ell)^\cc$ occurs because $\FF (\ell)$ is a sub-event of $\GG (\ell)$.

\begin{proposition}\label{h sub g}
For each $\ell \geq 1$, $\FF (\ell) \subseteq \GG (\ell)$.
\end{proposition}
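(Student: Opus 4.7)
The argument reduces pairwise: since $\GG(\ell) = \cap_{i<j} \GG^{i,j}(\ell)$ and likewise $\HH(\ell) = \cap_{i<j} \HH^{i,j}(\ell)$, it suffices to prove $\HH^{i,j}(\ell) \subseteq \GG^{i,j}(\ell)$ for each pair $i < j$. The inclusion $\HH_1^{i,j}(\ell) \subseteq \GG_1^{i,j}(\ell)$ is an equality by definition. The plan for the other three pieces is to exploit the fact that the additive $18\kappa_\ell^2$ terms in the $\HH_k$ ball radii are calibrated to absorb exactly the loss terms in Lemmas \ref{sep to dist} and \ref{amlg2}.

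The bridge from the random walk $Z^{\kappa_\ell}_m$ to the differences of centers of mass $M^i_{\xi_m} - M^j_{\xi_m}$ is provided by $\HH_1^{i,j}(\ell)$: on that event the reference-time gaps satisfy $\xi_m - \xi_{m-1} \leq \kappa_\ell$ for $N_{t_{\ell-1}} < m \leq N_{t_\ell}$, so the truncation indicators in definition \eqref{z walk} are all $1$, and the increments of $Z^{\kappa_\ell}_m$ agree with those of $M^i_{\xi_m} - M^j_{\xi_m}$ over $[N_{t_{\ell-1}}, N_{t_\ell}]$. This lets me treat $\|Z^{\kappa_\ell}_m\|$ and $\|M^i_{\xi_m} - M^j_{\xi_m}\|$ as interchangeable in the subsequent estimates (any common offset is absorbed by the radii).

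To obtain $\GG_2^{i,j}(\ell)$: on $\HH_2^{i,j}(\ell)$ one has $\TT_\ell \leq N_{t_\ell}$, so $\xi_{\TT_\ell} \leq t_\ell$ and $\|Z^{\kappa_\ell}_{\TT_\ell}\| \geq 6\delta_\ell^{-1}(2^\ell a) + 18\kappa_\ell^2$. Because $\xi_{\TT_\ell}$ is a reference time, both clusters are reference DOTs with diameter at most $2$, so Lemma \ref{sep to dist} yields $\dist(E^i_{\xi_{\TT_\ell}}, E^j_{\xi_{\TT_\ell}}) \geq \frac{1}{6}(\|M^i_{\xi_{\TT_\ell}} - M^j_{\xi_{\TT_\ell}}\| - 16) \geq \delta_\ell^{-1}(2^\ell a)$ once $\kappa_\ell \geq 1$, whence $\SS_\ell \leq \xi_{\TT_\ell} \leq t_\ell$. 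For $\GG_3^{i,j}(\ell)$ and $\GG_4^{i,j}(\ell)$: given $s$ in the relevant window, pick $m$ with $\xi_m \leq s < \xi_{m+1}$; Lemma \ref{amlg2} combined with the gap bound $\xi_{m+1} - \xi_m \leq \kappa_\ell$ gives $\dist(E_s^i, E_s^j) \geq \frac{1}{6}\|Z^{\kappa_\ell}_m\| - 3\kappa_\ell^2$, and the $\HH_3^{i,j}$ or $\HH_4^{i,j}$ lower bound on $\|Z^{\kappa_\ell}_m\|$ converts this into the desired bound $\delta_\ell 2^{\ell-1}a$ or $2^\ell a$ with the $3\kappa_\ell^2$ term cancelling exactly.

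The main technicality I expect is the boundary index $m = N_{t_\ell}$, where $\xi_{m+1} > t_\ell$ and the gap bound from $\HH_1(\ell)$ does not a priori apply. This should be routine to handle by first establishing the bound on $[\xi_{N_{t_{\ell-1}}}, \xi_{N_{t_\ell}}]$ and then using the linear-in-time diameter growth underlying \eqref{amlg bd1} together with the already-established separation at $\xi_{N_{t_\ell}}$ to extend the estimate to the full window $[t_{\ell-1}, t_\ell]$.
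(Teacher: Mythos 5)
Your proposal takes exactly the paper's route: reduce to the pairwise inclusions $\HH^{i,j}(\ell)\subseteq\GG^{i,j}(\ell)$, observe $\HH_1=\GG_1$, and use Lemma~\ref{amlg2} together with the gap control from $\HH_1(\ell)$ to convert the ball conditions on $Z^{\kappa_\ell}_m$ into separation bounds. The paper's proof is precisely this argument compressed into one display. Your shortcut for $\HH_2^{i,j}(\ell)\subseteq\GG_2^{i,j}(\ell)$ — applying Lemma~\ref{sep to dist} directly at the reference time $\xi_{\TT_\ell}$, where cluster diameters are at most $2$ — is clean and valid.

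Two of your technical claims, however, do not hold as written, and in both cases the paper's terse proof glosses over the same points. First, the parenthetical ``any common offset is absorbed by the radii'' is false. If a truncation event $\xi_{m_0}-\xi_{m_0-1}>\kappa_\ell$ occurs at some $m_0\le N_{t_{\ell-1}}$ — which $\HH_1(\ell)$ does not rule out, since it controls gaps only on $(N_{t_{\ell-1}},N_{t_\ell}]$ — then $Z^{\kappa_\ell}_m$ and $M^i_{\xi_m}-M^j_{\xi_m}$ differ by a fixed nonzero vector for all $m\ge m_0$, and a lower bound on $\|Z^{\kappa_\ell}_m\|$ gives no lower bound on $\|M^i_{\xi_m}-M^j_{\xi_m}\|$. (The paper's display $Z_{N_s}^{\kappa_\ell}=M^i_{\xi_{N_s}}-M^j_{\xi_{N_s}}$ commits the same overreach for $\ell\ge2$.) The actual remedy is not offset cancellation but restriction to $\cap_{m<\ell}\GG_1(m)$, which, using the monotonicity $\kappa_m\le\kappa_\ell$, supplies the missing gap control on $m\le N_{t_{\ell-1}}$; the application in Proposition~\ref{qg1} carries exactly that indicator, so the issue is harmless downstream, but the inclusion $\HH(\ell)\subseteq\GG(\ell)$ as literally stated does not follow from $\HH(\ell)$ alone.

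Second, your proposed handling of the boundary index $m=N_{t_\ell}$ does not close the gap. For $\xi_{N_{t_\ell}}\le s\le t_\ell$ one has $N_s=N_{t_\ell}$, the elapsed time since the last reference time is $s-\xi_{N_{t_\ell}}$, and this can be as large as $t_\ell-\xi_{N_{t_\ell}}$; no component of $\HH(\ell)$ bounds $\xi_{N_{t_\ell}+1}-\xi_{N_{t_\ell}}$ or $t_\ell-\xi_{N_{t_\ell}}$, so ``linear-in-time diameter growth'' from the already-established separation at $\xi_{N_{t_\ell}}$ produces no effective estimate on that final interval. (The paper's line $\xi_{N_s+1}-\xi_{N_s}\le\kappa_\ell$ is likewise unjustified when $N_s=N_{t_\ell}$, since $\GG_1(\ell)$ stops at $m=N_{t_\ell}$.) You were right to flag this boundary as the main technicality, but it is not a routine extension: genuinely closing it requires enlarging $\HH_1(\ell)$ to cover $m=N_{t_\ell}+1$, invoking $\HH_1(\ell+1)$, or reformulating $\GG_3,\GG_4$ to stop at $\xi_{N_{t_\ell}}$.
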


\begin{proof}
Fix $\ell \geq 1$ and let $s \in [t_{\ell-1}, t_\ell]$. When $\FF_4 (\ell) = \GG_4 (\ell)$ occurs, $\xi_{N(s)+1} - \xi_{N(s)}$ is at most $\kappa_\ell$. In this case, Lemma~\ref{amlg2} implies that $\dist( \VV_s^i, \VV_s^j ) \geq \|S_{N(s)}^{i,j}\| - 6 \kappa_\ell^2$ for every pair of distinct clusters $i < j$. This bound implies the following inclusions. First, $\FF_1^{i,j} (\ell) \subseteq \GG_1^{i,j} (\ell)$ holds because
\begin{align*}
\FF_1^{i,j} &= \left\{ \exists s \in (t_{\ell-1},t_\ell]: \| S_{N(s)}^{i,j} \| \geq 2^\ell \eps_\ell^{-1} a + 6 \kappa_\ell^2 \right\}\\ 
&\subseteq \left\{ \exists s \in (t_{\ell-1},t_\ell]: \dist(\VV_s^i,\VV_s^j) \geq 2^\ell \eps_\ell^{-1} a \right\} = \GG_1^{i,j}.
\end{align*}
Second, the inclusion $\FF_2^{i,j} (\ell) \subseteq \GG_2^{i,j}$ is justified by
\begin{align*}
\FF_2^{i,j} (\ell) &\subseteq \left\{\|S_{N(s)}^{i,j}\| \geq 2^{\ell-1} \eps_\ell a + 6 \kappa_\ell^2, \,\, s \in [t_{\ell-1},t_\ell]\right\}\\ 
&\subseteq \left\{ \dist (\VV_s^i, \VV_s^j) \geq 2^{\ell-1}\eps_\ell a, \,\, s \in [t_{\ell-1},t_\ell] \right\} = \GG_2^{i,j} (\ell).
\end{align*}
Third, $\FF_3^{i,j} (\ell) \subseteq \GG_3^{i,j} (\ell)$ holds because the reasoning that led to the first inclusion also shows that $\rho_\ell^{i,j} \leq N(\sigma_\ell^{i,j})$, hence
\begin{align*}
\FF_3^{i,j} (\ell) &=\left\{ \|S_{m}^{i,j}\| \geq 2^\ell a + 6 \kappa_\ell^2, \,\, m \in [\rho_\ell^{i,j}, N(t_\ell)+1] \right\}\\ 
&\subseteq \left\{ \|S_{N(s)}^{i,j}\| \geq 2^\ell a + 6 \kappa_\ell^2, \,\, s \in [\sigma_\ell^{i,j}, t_\ell] \right\}\\ 
& \subseteq \left\{ \dist (\VV_s^i, \VV_s^j) \geq 2^\ell a, \,\, s \in [\sigma_\ell^{i,j}, t_\ell] \right\} = \GG_3^{i,j} (\ell).
\end{align*}
\end{proof}

Recall that Lemma~\ref{xi tail} bounds above the tail probabilities of $\xi_m - \xi_{m-1}$. We combine it with a union bound to bound above the probability that $\FF_4 (\ell)^\cc$ occurs.

\begin{proposition}\label{notrunc}
There exists $\alpha > 0$ such that, if $\CC_0 \in \Refcon^\times$ satisfies $\sep (\CC_0) \geq \alpha$, then
\[
\PPprod_{\CC_0} ( \FF_4 (\ell)^\cc ) \leq (8n \ell)^{-2}, \quad \ell \geq 1.
\]
\end{proposition}

\begin{proof}
Assume that $\sep (\CC_0) = a$ for some $a > 0$. 
Since $N(t_\ell)$ is at most $t_\ell$, $\FF_4 (\ell)^\cc$ is a union of at most $t_\ell$ events of the form $\{\xi_m - \xi_{m-1} > \kappa_\ell\}$. Lemma~\ref{xi tail} states that there is a constant $\gamma_1 > 0$ such that 
\[
\QQ_{\mathcal{D}_0} (\xi_m - \xi_{m-1} > s) \leq 2e^{-\gamma_1 s}, \quad s \geq 0,
\] 
holds uniformly over $\mathcal{D}_0 \in \Conf_{d,n}^\times$ that satisfy the same hypothesis as $\CC_0$. We combine this tail bound with a union bound and the fact that $\kappa_\ell = \beta \log ( n \ell t_\ell)$:
\[
\QQ_{\CC_0} ( \FF_4 (\ell)^\cc ) \leq 2t_\ell e^{-\gamma_1 \kappa_\ell} = \frac{2 t_\ell}{(n\ell t_\ell)^{\gamma_1 \beta}}.
\]
If $a$ is at least a sufficiently large constant $\alpha > 0$, simply to ensure that $t_\ell (a) \geq 1$, then a suitable choice of $\beta$ in terms of $\gamma_1$ implies the claimed bound.
\end{proof}

When $\FF_4 (\ell)$ occurs, the time between consecutive reference times is at most $\kappa_\ell$ until time $t_\ell$, which implies that there are roughly $t_\ell / \kappa_\ell$ random walk steps between $t_{\ell-1}$ and $t_\ell$. Hence, it is rare for the norm of this random walk to be significantly smaller than $\sqrt{t_\ell /\kappa_\ell}$ until time $t_\ell$, i.e., for $\FF_1^{i,j} (\ell)^\cc$ to occur.

\begin{proposition}\label{t before n}
There exists $\alpha = \alpha (d,n)$ such that, if $\CC_0 \in \Refcon^\times$ satisfies $\sep (\CC_0) \geq \alpha$, then
\begin{equation}\label{t bef n eq}
\PPprod_{\CC_0} \big( \FF_1^{i,j} (\ell)^\cc \cap \FF_4 (\ell) \bigm\vert \mathscr{F}_{\eta_{\ell-1}} \big) 
\leq (8 n \ell)^{-2}, \quad \ell \geq 1,
\end{equation}
where $\eta_{\ell-1}$ denotes $\xi_{N(t_{\ell-1})+1}$.
\end{proposition}

\begin{proof}
Assume that $\sep (\CC_0) = a$ for some $a > 0$. Denote $r = 2^\ell \eps_\ell^{-1} a + 6 \kappa_\ell^2$. By definition,
\[
\FF_1^{i,j} (\ell)^\cc = \left\{ S_{m}^{i,j} \in B(r), \,\, N(t_{\ell-1}) < m \leq N(t_{\ell}) \right\}.
\]
Assume for the moment that, when $\FF_4 (\ell)$ occurs, the number of steps in $\FF_1^{i,j} (\ell)^\cc$ satisfies
\begin{equation}\label{f4 claim}
N(t_\ell) - N(t_{\ell-1}) - 1 \geq \lambda r^2
\end{equation}
with $\lambda = \gamma_3^{-1} \log \big(\gamma_2 (8 n \ell)^2 \big)$. We use \eqref{f4 claim} to prove \eqref{t bef n eq} as
\begin{align*}
\PPprod_{\CC_0} \left( \FF_1^{i,j} (\ell)^\cc \cap \FF_4 (\ell) \bigm\vert \mathscr{F}_{\eta_{\ell-1}} \right) &\leq \QQ_{\CC_0} \left( S_{N(t_{\ell-1}) + 1 + m}^{i,j} \in B(r), \,\, 0 \leq m \leq \lambda r^2 \bigm\vert \scr{F}_{\eta_{\ell-1}} \right)\\ 
&= \QQ_{\VV_{\eta_{\ell-1}}} \left( T^{i,j}_{B(r)^\cc} > \lambda r^2 \right) \leq \gamma_2 e^{-\gamma_3 \lambda}  = (8n\ell)^{-2}.
\end{align*}
The first inequality follows from the definition of $\FF_1^{i,j} (\ell)^\cc$ and the claimed bound on $N(t_\ell)-N(t_{\ell-1})-1$; the first equality is due to the strong Markov property at time $\eta_{\ell-1}$; the second inequality holds by \eqref{esc time}, which applies because $\VV_{\eta_{\ell-1}} \in \Refcon^\times$; and the second equality holds by the choice of $\lambda$.

The claimed bound \eqref{f4 claim} holds when $a$ is sufficiently large. Indeed, when $\FF_4 (\ell)$ occurs, 
\[
N(t_\ell) - N(t_{\ell-1}) -1 \gtrsim \kappa_\ell^{-1} (t_\ell - t_{\ell-1}).
\]
When $a$ is sufficiently large, this lower bound is at least $(2\kappa_\ell)^{-1} t_\ell$. It therefore suffices to show that
\[
(2\kappa_\ell)^{-1} t_\ell \geq \lambda r^2.
\]
Some algebra shows that $(2\kappa_\ell)^{-1} t_\ell \gtrsim_n (\log a) \lambda r^2$. Consequently, there exists $\alpha = \alpha(\gamma_2,\gamma_3,n) = \alpha (d,n) > 0$ such that, if $a \geq \alpha$, then the preceding bound is satisfied.
\end{proof}

For $\FF_2^{i,j} (\ell)^\cc$ to occur, the norm of the random walk must eventually drop below its value at time $t_{\ell-1}$ by a factor of roughly $\eps_\ell^{-1}$. The estimate \eqref{avoid} implies that this occurs with a probability of at most roughly $\eps_\ell^{d-2}$.

\begin{proposition}\label{h3}
There exists $\alpha = \alpha (d,n)$ such that, if $\CC_0 \in \Refcon^\times$ satisfies $\sep (\CC_0) \geq \alpha$, then
\begin{equation*}
\PPprod_{\CC_0} \big( \FF_2^{i,j} (\ell)^\cc \bigm\vert \mathscr{F}_{\eta_{\ell-1}} \big) \1_{\GG (\ell-1)} \leq (8n\ell)^{-2}, \quad \ell \geq 1,
\end{equation*}
where $\eta_{\ell-1}$ denotes $\xi_{N(t_{\ell-1})+1}$.
\end{proposition}

\begin{proof}
Assume that $\sep (\CC_0) = a $ for some $a > 0$. Denote $r = 2^\ell \eps_\ell a + 6 \kappa_\ell^2$. By definition,
\[
\FF_2^{i,j} (\ell) = \left\{S_m^{i,j} \in B(r)^\cc, \,\, N(t_{\ell-1}) \leq m \leq N (t_\ell) \right\}.
\]
For $\FF_2^{i,j} (\ell)$ to occur, it suffices for the random walk from $S^{i,j}_{N(t_{\ell-1})}$ to never hit $B(r)$. In fact, when $\GG (\ell-1)$ occurs, $S^{i,j}_{N(t_{\ell-1})} \in B(r)^\cc$, so we can consider the random walk from $X = S^{i,j}_{N(t_{\ell-1})+1}$ instead. This is a minor point, so we discuss it at the end of the proof. For now, we simply use the inclusion  
\begin{equation}\label{f2 incl}
F_2^{i,j} (\ell) \cap \GG(\ell-1) \supseteq \left\{ S_m^{i,j} \in B(r)^\cc, \,m > N(t_{\ell-1})\right\} \cap \GG(\ell-1).
\end{equation}
By \eqref{f2 incl}, the strong Markov property at $\eta_{\ell-1}$, and \eqref{avoid}, when $\GG (\ell-1)$ occurs, 
\begin{align}
\QQ_{\CC_0} \left( \FF_2^{i,j} (\ell) \bigm\vert \scr{F}_{\eta_{\ell-1}} \right) 
&\geq \QQ_{\CC_0} \left( S_m^{i,j} \in B(r)^\cc, \,m > N(t_{\ell-1}) \bigm\vert \scr{F}_{\eta_{\ell-1}} \right) \nonumber\\ 
&= \QQ_{\VV_{\eta_{\ell-1}}} \left( T_{B(r)^\cc}^{i,j} = \infty \right) \geq 1 - \gamma_5 \Big( \frac{r}{\|X\|} \Big)^{d-2}. \label{h3 bd}
\end{align}
Our use of \eqref{avoid} relies on the fact that $\VV_{\eta_{\ell-1}} \in \Refcon^\times$ and requies that we take $a$ sufficiently large to ensure that $r \geq \gamma_4$.

To bound below $\|X\|$, we note that the separation of $\VV_{\eta_{\ell-1}}$ is at least $2^{\ell-1} a$, while the diameters of the clusters are at most $2$, when $\GG (\ell-1)$ occurs. By \eqref{tri app both},
\begin{equation}\label{f2 x bd}
\| X \| \geq \dist (\VV_{\eta_{\ell-1}}^i,\VV_{\eta_{\ell-1}}^j) - \diam (\VV_{\eta_{\ell-1}}^i) - \diam (\VV_{\eta_{\ell-1}}^j) \geq 2^{\ell-1} a - 4.
\end{equation}
Call this lower bound $R$, in which case \eqref{h3 bd} and \eqref{f2 x bd} imply that
\[
\QQ_{\CC_0} \left( \FF_2^{i,j} (\ell)^\cc \bigm\vert \scr{F}_{\eta_{\ell-1}} \right) \1_{\GG(\ell-1)}
\leq \gamma_5 \Big( \frac{r}{R} \Big)^{d-2}.
\]
Some algebra shows that $\frac{r}{R} \gtrsim (n \ell \log a)^{-1}$ when $a$ is sufficiently large in terms of $n$. Hence, there exists $\alpha = \alpha (\gamma_4,\gamma_5,n) = \alpha (d,n) > 0$ such that, if $a \geq \alpha$, then the preceding bound is at most $(8n\ell)^{-2}$.

To conclude, we explain why $S^{i,j}_{N(t_{\ell-1})} \notin B(r)$, which justifies the inclusion \eqref{f2 incl}. By \eqref{tri app both} and \eqref{xi diam bd}, and when $\GG (\ell-1)$ occurs,
\[
\| S_{N(t_{\ell-1})}^{i,j} \| \geq \dist (\VV_{t_{\ell-1}}^i, \VV_{t_{\ell-1}}^j) - 2 \left(\xi_{N(t_{\ell-1})+1} - \xi_{N(t_{\ell-1})} + 2\right) \geq 2^{\ell-1}a - 2 \kappa_{\ell-1}.
\]
This quantity is roughly larger than $r$ by a factor of $\ell \log a$, so $2^{\ell-1}a - 2\kappa_{\ell-1} - r > 0$ when $a$ is sufficiently large in terms of $n$. We can assume that this is true of every $a \geq \alpha$ by increasing $\alpha$ as necessary.
\end{proof}

For $\FF_3^{i,j} (\ell)^\cc$ to occur, the norm of the random walk must eventually drop by a factor of $\eps_\ell^{-1}$, relative to its value at step $\rho_\ell^{i,j}$. As in the proof of Proposition~\ref{h3}, the estimate \eqref{avoid} implies that this occurs with a probability of at most roughly $\eps_\ell^{d-2}$.

\begin{proposition}\label{h4}
There exists $\alpha = \alpha (d,n)$ such that, if $\CC_0 \in \Refcon^\times$ satisfies $\sep (\CC_0) \geq \alpha$, then
\begin{equation*}
\PPprod_{\CC_0} \big( \FF_1^{i,j} (\ell) \cap \FF_3^{i,j} (\ell)^\cc \bigm\vert \mathscr{F}_{\eta_{\ell-1}} \big) \leq (8n\ell)^{-2}.
\end{equation*}
\end{proposition}

\begin{proof}
Assume that $\sep (\CC_0) = a$ for some $a > 0$. When $\FF_1^{i,j} (\ell)$ occurs, there is a first step $m \in (N (t_{\ell-1}), N (t_\ell)]$ at which $Y = S_m^{i,j}$ belongs to $B(R)^\cc$, where $R = 2^\ell \eps_\ell^{-1} a + 6\kappa_\ell^2$. For $\FF_3^{i,j} (\ell)^\cc$ to occur, $S_m^{i,j}$ must hit $B(r)$, where $r = 2^\ell a + 6\kappa_\ell^2$, starting from $Y$. Note that $R$ is larger than $r$ by a factor of roughly $n \ell \log a$. By the strong Markov property at time $\xi_m$ and \eqref{avoid},
\begin{align*}
\PPprod_{\CC_0} \big( \FF_1^{i,j} (\ell) \cap \FF_3^{i,j} (\ell)^\cc \bigm\vert \mathscr{F}_{\eta_{\ell-1}} \big)
&= \EE_{\VV_{\eta_{\ell-1}}} \Big[ \PPprod_{\VV_{\xi_m}} \big( \FF_3^{i,j} (\ell)^\cc \big); \FF_1^{i,j} (\ell) \Big] \\  
&\leq \EE_{\VV_{\eta_{\ell-1}}} \Big[ \gamma_5 \Big( \frac{r}{\| Y \|} \Big)^{d-2} \Big] \leq \gamma_5 \Big( \frac{r}{R} \Big)^{d-2}.
\end{align*}
Note that our use of \eqref{avoid} makes use of the fact that $\VV_{\eta_{\ell-1}} \in \Refcon^\times$ and, to use it, we must take $a$ sufficiently large to ensure that $r \geq \gamma_4$. 
Some algebra shows that $\frac{r}{R} \gtrsim (n\ell \log a)^{-1}$ when $a$ is sufficiently large. Hence, there exists $\alpha = \alpha (\gamma_4,\gamma_5,n) = \alpha (d,n) > 0$ such that, if $a \geq \alpha$, then the preceding bound is at most $(8n\ell)^{-2}$.
\end{proof}

We combine the preceding five propositions to prove Proposition~\ref{qg1}.

\begin{proof}[Proof of Proposition~\ref{qg1}]
Let $\alpha$ be the largest of the constants it names in Propositions~\ref{notrunc} through \ref{h4}, and assume that $\CC_0 \in \Refcon^\times$ satisfies $\sep (\CC_0) \geq \alpha$. We aim to show that 
\begin{equation}\label{loc qg1}
\PPprod_{\CC_0} \left( \GG (\ell)^\cc \mid \mathscr{F}_{\eta_{\ell-1}} \right) \1_{\GG (\ell-1)} \leq (8 \ell)^{-2},
\end{equation}
in terms of $\eta_{\ell-1} = \xi_{N(t_{\ell-1})+1}$. 
By Proposition~\ref{h sub g} and a union bound over distinct pairs of clusters, we have
\begin{equation}\label{g to h}
\PPprod_{\CC_0} \left( \GG (\ell)^\cc \mid \mathscr{F}_{\eta_{\ell-1}} \right) \leq \PPprod_{\CC_0} \left( \FF (\ell)^\cc \mid \mathscr{F}_{\eta_{\ell-1}} \right) \leq \sum_{i < j} \PPprod_{\CC_0} \left( \FF^{i,j} (\ell)^\cc \mid \mathscr{F}_{\eta_{\ell-1}} \right).
\end{equation}

Using the fact that, for events $E_1$ and $E_2$, $E_2^\cc$ is contained in the disjoint union $( E_1 \cap E_2^\cc ) \cup E_1^\cc$, we find that
\begin{multline*}
\PPprod_{\CC_0} \left( \FF^{i,j} (\ell)^\cc \bigm\vert \mathscr{F}_{\eta_{\ell-1}} \right) \leq 3 \PPprod_{\CC_0} \left( \FF_4 (\ell)^\cc \bigm\vert \mathscr{F}_{\eta_{\ell-1}} \right) + 2 \PPprod_{\CC_0} \left( \FF_1^{i,j} (\ell)^\cc \cap \FF_4 (\ell) \bigm\vert \mathscr{F}_{\eta_{\ell-1}} \right)\\  
+ \PPprod_{\CC_0} \left( \FF_2^{i,j} (\ell)^\cc \bigm\vert \mathscr{F}_{\eta_{\ell-1}} \right) + \PPprod_{\CC_0} \left( \FF_1^{i,j} (\ell) \cap \FF_3^{i,j} (\ell)^\cc \bigm\vert \mathscr{F}_{\eta_{\ell-1}} \right).
\end{multline*}
When $\GG(\ell-1)$ occurs, we can apply Propositions~\ref{notrunc} through \ref{h4} to bound the terms on the right-hand side as 
\[
\PPprod_{\CC_0} \big( \FF^{i,j} (\ell)^\cc \mid \mathscr{F}_{t_{\ell-1}} \big) \1_{\GG(\ell-1)} \leq 8(n\ell)^{-2}.
\]
The bound \eqref{loc qg1} then follows from \eqref{g to h} and the fact that there are at most $n^2$ distinct pairs of clusters. 
\end{proof}

\section{Strategy for the proof of Theorem \ref{thm form dot}}\label{sec strat 11}

Let us briefly summarize what the preceding sections have accomplished. In Section~\ref{sec fate}, we proved our main result, Theorem~\ref{fate}, assuming Proposition~\ref{inf xi} and Theorem~\ref{thm form dot}. In Sections~\ref{sec intra} through \ref{sec clust sep}, we proved Proposition~\ref{inf xi}, using an approximation of HAT by IHAT and a random walk model of cluster separation under IHAT. In this section, our focus shifts to proving Theorem~\ref{thm form dot}.

We continue to assume that $d \geq 5$ and $n \geq 4$. Recall that Theorem~\ref{thm form dot} identifies, for every $a > 0$, a number of steps $N = N(a,d,n)$ and a positive probability $p = p(a,d,n)$ such that the $\PP_{V}$ probability that $U_N$ has an $(a,1)$ DOT clustering in $\Refcon^\times$ is at least $p$, for any $V \in \Conf_{d,n}$.

If $p$ is allowed to depend on $V$, then it is easy to identify a sequence of $N'$ configurations that can be realized as $(U_1, \dots, U_{N'})$ under $\PP_V$ and is such that $U_{N'}$ has an $(a,1)$ DOT clustering in $\Refcon^\times$. Indeed, it would take only two ``stages'':
\begin{enumerate}
\item[(1')] First, we rearrange $V$ into a line segment emanating in the $-e_1$ direction from, say, the element of $V$ which is least in the lexicographic ordering of $\Z^d$.
\item[(2')] Second, we ``treadmill'' (Figures~\ref{treadmill} and \ref{treadmill2}) a pair of elements from the ``tip'' of the segment, in the $-e_1$ direction, until the pair is sufficiently far from the other elements. We then repeat this process, one pair at a time, until only two or three elements of the initial segment remain.
\end{enumerate}

\begin{figure}[htbp]
\centering {\includegraphics[width=0.85\linewidth]{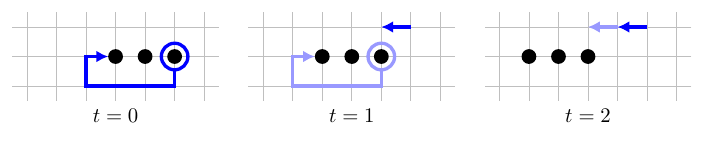}}
\caption[An example of treadmilling]{An example of treadmilling three elements in the $-e_1$ direction.}
\label{treadmill}
\end{figure}

\begin{figure}[htbp]
\centering {\includegraphics[width=0.55\linewidth]{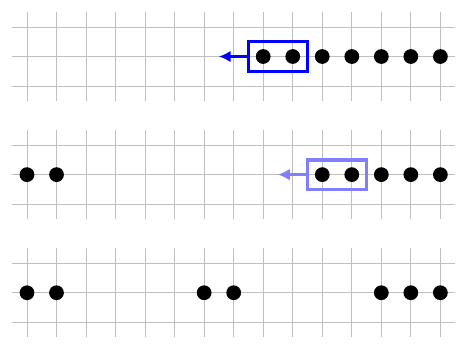}}
\caption[Treadmilling a line segment into a DOT clustering]{By treadmilling pairs of elements in the $-e_1$ direction, a line segment of seven elements can be rearranged into a configuration with a $(5,\frac{2}{\log 5})$ DOT clustering in $\Refcon^\times$.}
\label{treadmill2}
\end{figure}

While stage (2') could be realized by HAT with at least a probability depending on $d$ and $n$ only, stage (1') could introduce a dependence on $V$ into $p$. Indeed, it might require that we specify the transport of an activated element over a distance of roughly the diameter of $V$. We will avoid this by adding one preliminary stage; in the resulting, three-stage procedure, stages (1') and (2') are essentially stages (2) and (3).

To specify the stages, we need two definitions.

\begin{definition}[Lined-up]\label{def lined up}
We say that $\CC \in \Conf_{d,n}^\times$ can be lined-up with separation $r > 0$ if $\dist(\CC^i, \CC^{\neq i}) \geq 2r$ and if $\CC^i$ is a connected subset of $\Z^d$, for every $1 \leq i \leq \#\CC$. We also say that the union $\cup_i \CC^i \in \Conf_{d,n}$ can be lined-up with separation $r$.
\end{definition}

Note that tuples in $\Conf_{d,n}^\times$ may consist of only one entry, in which case it suffices for this entry to be a connected set for it to be lined-up with separation $r > 0$. 

\begin{definition}[Lex]
We say that an element $x$ of a finite set $A \subset \Z^d$ is lex in $A$, denoted $\lex (A) = x$, if it is least among the elements of $A$ in the lexicographic order of $\Z^d$.
\end{definition}

Here is the three-stage procedure, which takes as input $V \in \Conf_{d,n}$ and an integer $a \geq 1$, which we will later require to be sufficiently large in terms of $d$ and $n$. Note that the output of each algorithm is an element of $\Conf_{d,n}^\times$, not an element of $\Conf_{d,n}$.
\begin{enumerate}
\item First, we will use Algorithm $\mathcal{A}_{\ref{alg1}}$ to construct a clustering $\CC = \mathcal{A}_{\ref{alg1}} (V,a)$ of a configuration which can be lined-up with separation $dn^2 a$. This algorithm is the most complicated of the three. In brief, the algorithm repeatedly attempts to create a non-isolated lex element of a cluster, so that it can be ``treadmilled'' in the $-e_1$ direction---along with a neighboring element---to form a new dimer cluster. 
\item In the second stage, we will apply Algorithm $\mathcal{A}_{\ref{alg2}}$ to ``line-up'' the elements of each cluster $\CC^i$. Specifically, in terms of the line segment
\begin{equation}\label{line segment}
L_k = \big\{ -j e_1: j \in \{0,1,\dots,k-1\} \big\}, \quad k \geq 1,
\end{equation}
we will rearrange the elements of $\CC^i$ into the set $\lex (\CC^i) + L_{|\CC^i|}$. Specifically, the resulting clustering will be
\[ \mathcal{A}_{\ref{alg2}} (\CC) = \left( \lex (\CC^i) + L_{|\CC^i|} \right)_{i=1}^{\# \CC}.\]
\item In the third stage, Algorithm $\mathcal{A}_{\ref{alg3}}$ will iteratively treadmill pairs of elements from each segment in the $-e_1$ direction for multiples of $a$ steps until only a dimer or a trimer of the original segment remains (Figure~\ref{treadmill2}). The resulting clustering $\mathcal{A} (V,a) = \mathcal{A}_{\ref{alg3}} (\mathcal{A}_{\ref{alg2}} (\CC), a)$ will be an $(a,1)$ DOT clustering.
\end{enumerate} 

In the next section, we prove some results which will aid our analysis of the algorithms. In particular, we prove a harmonic measure lower bound for lex elements. After preparing these inputs, in Section~\ref{sec form dot}, we will state and analyze the three algorithms to prove Theorem~\ref{thm form dot}.

\section{Inputs to the proof of Theorem~\ref{thm form dot}}\label{sec inputs}

\subsection{A geometric lemma}\label{sec geo}

To facilitate the application of a harmonic measure estimate in the next subsection, we need a geometric lemma and a consequence thereof. 
We state the following lemma with greater generality than is needed for this immediate need, so that we can reuse it in a later section. The statement requires the notion of the $\ast$-visible boundary of a set, which we first defined in \eqref{bdyvis}.

\begin{lemma}\label{kesten cor}
Let $d \geq 1$. Let $A$ be a finite subset of $\Z^d$ that contains the origin, and let $x$ and $y$ be distinct elements of $\bdyvis A$. There is a path $\Gamma$ from $x$ to $y$ in $A^\cc$ of length at most $\sqrt{d} \, \diam (A) + 3^{d+1} |A|$. Moreover, $\Gamma$ is contained in $\{z \in \Z^d: \|z\| \leq \diam (A) + \sqrt{d}\}$. 
\end{lemma}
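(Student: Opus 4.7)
The plan is to construct $\Gamma$ as a short ``backbone'' from $x$ to $y$ that is then repaired to avoid $A$ by detouring around each $\ast$-connected component of $A$ via Lemma~\ref{kesten result}. Set $R = \diam(A)$. Since $0 \in A$ we have $A \subseteq \{z \in \Z^d: \|z\| \leq R\}$, and every element of $\bdyvis A$ lies within $L^\infty$-distance $1$ (hence $L^2$-distance at most $\sqrt{d}$) of some element of $A$, so $x, y \in \bdyvis A \subseteq \{\|z\| \leq R + \sqrt{d}\}$. Decompose $A$ into its $\ast$-connected components $A_1, \ldots, A_k$; distinct components are at $L^\infty$-distance at least $2$, so each $\bdyvis A_i$ is contained in the $L^\infty$-$1$-neighborhood of $A_i$, has at most $3^d|A_i|$ elements, and is disjoint from every other $A_j$; in particular $\bdyvis A_i \subseteq A^c$. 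By Lemma~\ref{kesten result}, each $\bdyvis A_i$ is connected in $\Z^d$.

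Take $\Gamma_0$ to be a $\Z^d$-path from $x$ to $y$ obtained as a tight discretization of the straight $\R^d$-segment $\overline{xy}$; since $\overline{xy}$ lies in the convex ball $\{\|z\| \leq R+\sqrt{d}\}$, one can arrange $\Gamma_0$ to remain inside this ball and to have length at most $\|x-y\|_1 \leq \sqrt{d}\,\|x-y\|_2 \leq \sqrt{d}(R+2\sqrt{d}) = \sqrt{d}\,R + 2d$. Now repair $\Gamma_0$ one $\ast$-component at a time: whenever the current path intersects some $A_i$, let $u_i$ and $v_i$ be the vertices immediately before the first entry into $A_i$ and immediately after the last exit from $A_i$. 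Both $u_i, v_i$ lie in $A^c$, are $\ast$-adjacent to $A_i$, and have paths to infinity in $A^c \subseteq A_i^c$ (since $A$ is finite and the rest of the current path can be extended), so $u_i, v_i \in \bdyvis A_i$. By Lemma~\ref{kesten result}, replace the offending subpath between $u_i$ and $v_i$ by a $\Z^d$-path in $\bdyvis A_i$ of length at most $|\bdyvis A_i| - 1 \leq 3^d|A_i|$; because $\bdyvis A_i$ is disjoint from $A$, this repair introduces no new intersections with any other $A_j$. Iterating over all $i$ yields the desired $\Gamma \subseteq A^c$.

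Summing contributions, the total length is at most $\sqrt{d}\,R + 2d + \sum_i 3^d|A_i| = \sqrt{d}\,R + 2d + 3^d|A| \leq \sqrt{d}\,R + 3^{d+1}|A|$, where the last inequality absorbs the $2d$ into the second term using $|A| \geq 1$ and $d \leq 3^d$. Each detour lies in the $L^\infty$-$1$-neighborhood of some $A_i \subseteq \{\|z\| \leq R\}$, hence in $\{\|z\| \leq R + \sqrt{d}\}$, and the backbone lies in this ball by construction, giving the claimed containment. The main technical obstacle is producing a $\Z^d$-backbone that strictly stays inside the tight ball $\{\|z\| \leq R + \sqrt{d}\}$: a naive coordinate-monotone discretization may stray outside when $x$ and $y$ are near antipodal points of the bounding sphere. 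One resolves this by discretizing $\overline{xy}$ with diagonal $\Z^{d\ast}$-steps along the segment and then expanding each such step into at most $d$ axial $\Z^d$-steps, monitoring $\|z\|$ throughout.
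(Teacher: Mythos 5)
Your proof is correct and follows essentially the same route as the paper: decompose $A$ into its $\ast$-connected components, invoke Lemma~\ref{kesten result} to get a connected visible boundary around each, and repair a short backbone from $x$ to $y$ by rerouting around the components through those visible boundaries, charging at most $3^d|A_i|$ per detour. The only variation is the explicit care you take to keep the backbone inside $\{\|z\|\leq\diam(A)+\sqrt{d}\}$, a point the paper leaves implicit; a cleaner way to resolve it than discretizing the segment is to take a coordinate-monotone shortest path that first changes the coordinates $i$ with $|y_i|\leq|x_i|$ and then the others, which keeps every intermediate point inside $B(\max\{\|x\|,\|y\|\})$.
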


\begin{proof}
Fix a finite set $A \subset \Z^d$ containing the origin, and fix two elements, $x$ and $y$, in $\bdyvis A$. Let $\{B_i\}_i$ be the collection of $\ast$-connected components of $A$. Because $A$ is finite, each $B_i$ is finite and, as each $B_i$ is also $\ast$-connected, each $\bdyvis B_i$ is connected in $\Z^d$ by Lemma~\ref{kesten result}.

If $\Gamma$ is the path of least length from $x$ to $y$, then this length, denoted $|\Gamma|$, satisfies 
\[
|\Gamma| \leq \sqrt{d} \diam (A) + 2d.
\]
Indeed, by definition, if $u \in \bdyvis A$, then there is $v \in A$ such that $\|u-v\| \leq \sqrt{d}$. By the triangle inequality, $\diam (\bdyvis A)$ is at most $\diam (A) + 2\sqrt{d}$, which implies the bound on $|\Gamma|$ because $|\Gamma| \leq \sqrt{d}\|x-y\|$. We will edit $\Gamma$ to obtain a potentially longer path which does not intersect $A$.

If $\Gamma$ does not intersect $A$, then we are done. Otherwise, let $i_1$ denote the label of the first $\ast$-connected component of $A$ intersected by $\Gamma$. Additionally, denote by $u$ and $v$ the first and last indices of $\Gamma$ which intersect $\bdyvis B_{i_1}$. Because $\bdyvis B_{i_1}$ is connected in $\Z^d$, there is a path $\Lambda$ in $\bdyvis B_{i_1}$ from $\Gamma_u$ to $\Gamma_v$. We may therefore edit $\Gamma$ to form $\Gamma'$:
\[ \Gamma' = \big( \Gamma_1, \dots, \Gamma_{u-1}, \Lambda_1, \dots, \Lambda_{|\Lambda|}, \Gamma_{v+1}, \dots, \Gamma_{|\Gamma|} \big). \]

If $\Gamma'$ does not intersect $A$, then we are done, as $\Gamma'$ is contained in the union of $\Gamma$ and $\cup_i \bdyvis B_i$, and because $\cup_i \bdyvis B_i$ has at most $3^d |A|$ elements. Accordingly,
\begin{equation}\label{gam length}
|\Gamma'| \leq \sqrt{d} \diam (A) + 2d + 3^d |A| \leq \sqrt{d}\, \diam (A) + 3^{d+1} |A|.
\end{equation}
Otherwise, if $\Gamma'$ intersects another $\ast$-connected component $B_{i_2}$ of $A$, we can argue in an analogous fashion to obtain a path $\Gamma''$ which intersects neither $B_{i_1}$ nor $B_{i_2}$. Like $\Gamma'$, $\Gamma''$ is contained in the union of $\Gamma$ and $\cup_i \bdyvis B_i$ and so its length satisfies the same upper bound. By continuing inductively, we obtain a path from $x$ to $y$ with a length of at most the right-hand side of \eqref{gam length}.

The path is contained in the union of $\Gamma$ and $\cup_i \bdyvis B_i$, which is contained in $\{z \in \Z^d: \|z\| \leq \diam (A) + \sqrt{d}\}$ because $A$ contains the origin by assumption.
\end{proof}

A consequence of this result is a simple comparison of harmonic measure at two points.

\begin{lemma}\label{lem hm comp}
Let $d \geq 5$ and $n \geq 2$. There is a constant $c = c(d,n)$ such that, if $A \cup B \in \Conf_{d,n}$ such that $A$ is connected and $\dist (A,B) \geq 4^d n$, then, for any distinct $x, y \in A$ that are exposed in $A \cup B$,
\begin{equation}\label{hm comp}
\H_{A \cup B} (x) \geq c \H_{A \cup B} (y).
\end{equation}
\end{lemma}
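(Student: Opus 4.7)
The plan is to reduce \eqref{hm comp} to a comparison of escape probabilities and then to use Lemma~\ref{kesten cor} to connect neighbors of $x$ and $y$ by a short path in the complement of $A \cup B$. By \eqref{esc over cap}, we may write $\H_{A \cup B}(\cdot) = \Es_{A \cup B}(\cdot)/\capac_{A \cup B}$, so the common capacity cancels and it suffices to prove $\Es_{A \cup B}(x) \geq c\, \Es_{A \cup B}(y)$ with $c = c(d,n) > 0$. Conditioning on the first step of the walk, we will decompose
\[
\Es_{A \cup B}(z) = (2d)^{-1} \sum_{z'' \sim z,\ z'' \notin A \cup B} \P_{z''}\bigl(\tau_{A \cup B} = \infty\bigr)
\]
for $z \in \{x, y\}$, noting that neighbors lying inside $A \cup B$ contribute zero.

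Because $x$ is exposed in $A \cup B$, at least one neighbor $x' \in (A \cup B)^\cc$ of $x$ lies on $\bdyvis A$; fix any such $x'$. For every $y'' \sim y$ with $y'' \notin A \cup B$ and $\P_{y''}(\tau_{A \cup B} = \infty) > 0$, the same reasoning forces $y'' \in \bdyvis A$. Since $A$ is connected and contained in the $n$-element set $A \cup B$, we have $\diam(A) \leq n$ and $|A| \leq n$. After translating so that the origin lies in $A$, Lemma~\ref{kesten cor} produces a path $\Gamma$ from $x'$ to $y''$ in $A^\cc$ of length at most $L := \sqrt{d}\, n + 3^{d+1} n$ that is contained in the ball of radius $n + \sqrt{d}$ about the origin. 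Forcing the walk from $x'$ to traverse $\Gamma$---an event of probability at least $(2d)^{-L}$, during which $A \cup B$ is avoided---and invoking the strong Markov property at the arrival time at $y''$ will yield
\[
\P_{x'}\bigl(\tau_{A \cup B} = \infty\bigr) \geq (2d)^{-L}\, \P_{y''}\bigl(\tau_{A \cup B} = \infty\bigr).
\]
Summing this over the (at most $2d$) choices of $y''$ in the decomposition of $\Es_{A \cup B}(y)$, we obtain $\Es_{A \cup B}(x) \geq (2d)^{-L-2}\, \Es_{A \cup B}(y)$, which is \eqref{hm comp} with $c = (2d)^{-L-2}$.

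The main obstacle will be ensuring that the path produced by Lemma~\ref{kesten cor} avoids $B$, not merely $A$. Quantitatively, the lemma confines $\Gamma$ to within $n + \sqrt{d}$ of $A$, so the hypothesis $\dist(A,B) \geq 4^d n$ is precisely what guarantees $\Gamma \cap B = \emptyset$. A secondary technicality is checking that the relevant neighbors $x'$ and $y''$ really do lie on $\bdyvis A$ so that Lemma~\ref{kesten cor} applies; this follows from the exposed hypothesis, since any path to infinity from such a neighbor that avoids $A \cup B$ in particular avoids $A$.
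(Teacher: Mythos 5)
Your proposal is correct and follows essentially the same route as the paper's proof: reduce \eqref{hm comp} to a comparison of escape probabilities, invoke Lemma~\ref{kesten cor} to produce a short path in $A^\cc$ between a $\bdyvis A$-neighbor of $x$ and a $\bdyvis A$-neighbor of $y$, use the ball containment from that lemma together with $\dist(A,B) \geq 4^d n$ to guarantee the path also avoids $B$, and then compare escape probabilities via the Markov property. The paper's proof is terser and leaves the first-step decomposition and the final summation implicit, but the key lemma and the mechanism are identical to yours.
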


\begin{proof}
Let $x, y$ be elements of $A$ which are exposed in $A \cup B$. If $u$ is any element of $\bdyvis A \cap \bdy \{x\}$ and $v$ is any element of $\bdyvis A \cap \bdy \{y\}$, then, by Lemma \ref{kesten cor}, there is a path $\Gamma$ from $u$ to $v$ in $A^\cc$ of length at most
\[ \sqrt{d} \, \diam (A) + 3^{d+1} |A| \leq 1.1 \cdot 3^{d+1} n \leq 4^d n.\]
The first inequality is due to the assumption that $A$ is connected, which implies that $\diam (A)$ is at most $n$, and the fact that $\sqrt{d} \leq 0.1 \cdot 3^{d+1}$ when $d \geq 5$. The second inequality holds because $d \geq 5$.

Because $B$ is a distance of at least $4^d n$ from $A$, $\Gamma$ must also lie outside of $B$. This implies that there is a constant $c = c(d,n)$ such that
\[ \Es_{A \cup B} (x) \geq c \Es_{A \cup B} (y).\]
Dividing by the capacity of $A \cup B$ gives \eqref{hm comp}.
\end{proof}

\subsection{An estimate of harmonic measure for lex elements}\label{sec lex}

We now prove a harmonic measure lower bound for lex elements. 

\begin{lemma}\label{lex hm}
Let $d \geq 5$ and $n \geq 2$. There are constants $r = r (d,n)$ and $c = c(d)$ such that, if $A \cup B \in \Conf_{d,n}$ satisfies $\dist (A,B) \geq r$ and if $x$ is lex in $A$, then
\begin{equation}\label{lex hm bd1}
\Es_{A \cup B} (x) \geq  c n^{-\frac{1}{d-2}-o_d(1)} 
\end{equation}
and, consequently,
\begin{equation}\label{lex hm bd2}
\H_{A \cup B} (x) \geq c n^{-\frac{d-1}{d-2}-o_d(1)}.
\end{equation}
For concreteness, the $o_d (1)$ quantities are never larger than $0.8$ when $d \geq 5$, and the two lower bounds can be replaced with $cn^{-1.2}$ and $c n^{-2.2}$. 
\end{lemma}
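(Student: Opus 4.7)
The plan is to exploit two features: the lex property forces $A$ into the coordinate half-space $H^+ := \{y \in \Z^d : y_1 \geq x_1\}$, and from any point in the complementary open half-space a one-dimensional gambler's-ruin estimate on the first-coordinate projection yields a lower bound of order $1/R$ on the probability of escaping distance $R$ in the $-e_1$ direction without re-entering $H^+$. Balancing this cost against a Green's-function union bound for the probability of subsequently hitting $A\cup B$ from the far exit point will give the bound of order $n^{-1/(d-2)}$ for $\Es_{A \cup B}(x)$; the harmonic-measure bound \eqref{lex hm bd2} then follows from $\H = \Es/\capac$ and the universal estimate $\capac_{A\cup B} \leq n/G(o) \leq 6n/5$ supplied by Lemma~\ref{g5}.

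First, $x = \lex(A)$ forces $a_1 \geq x_1$ for every $a \in A$ (else $a$ would precede $x$ in lex order), so $A \subseteq H^+$. Since $x - e_1 \notin A$ by lex and $x - e_1 \notin B$ when $r \geq 1$, a first-step decomposition gives
\[
\Es_{A\cup B}(x) \geq (2d)^{-1}\, \P_{x - e_1}(\tau_{A\cup B} = \infty).
\]
Set $R := \lceil (c_1 n)^{1/(d-2)} \rceil$ with $c_1 = c_1(d)$ large enough that \eqref{eq green asymp} gives $G(z)/G(o) \leq R^{2-d}/(8n)$ whenever $\|z\| \geq R$. The first-coordinate projection $W_t := e_1 \cdot (S_t - x)$ is a lazy symmetric walk on $\Z$ starting at $-1$, so gambler's ruin gives $\P(T_{-R} < T_0) = 1/R$ for the hitting times $T_0, T_{-R}$ of $0$ and $-R$; and since $\E[T_0 \wedge T_{-R}] = O(dR)$, Markov's inequality with cutoff $K := c_2(d) R^2$ yields
\[
\P\bigl(T_{-R} < T_0,\ T_{-R} \leq K\bigr) \geq (2R)^{-1}.
\]

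On this event $S$ stays strictly outside $H^+$---hence outside $A$---throughout $[0,T_{-R}]$, and the exit point $w := S_{T_{-R}}$ satisfies $w_1 = x_1 - R$ and $\|w - x\| \leq K + 1$, giving $\|w - a\| \geq R$ for every $a \in A$ and $\|w - b\| \geq r - K - 1$ for every $b \in B$. A union bound combined with the Green's-function estimate then gives $\P_w(\tau_A < \infty) \leq c_1 n R^{2-d} \leq 1/8$ by the choice of $R$, and $\P_w(\tau_B < \infty) \leq 1/8$ provided $r - K - 1 \geq R$, which is enforced by taking $r = r(n,d)$ a sufficiently large polynomial in $n$---concretely, $r \geq c_3(d)\, n^{2/(d-2)}$ suffices. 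Applying the strong Markov property at $T_{-R}$ then yields $\P_{x - e_1}(\tau_{A \cup B} = \infty) \geq (2R)^{-1}(3/4) \geq c'\, n^{-1/(d-2)}$, which combined with the first-step inequality proves \eqref{lex hm bd1}, and \eqref{lex hm bd2} follows as noted.

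The principal technical delicacy is controlling the transverse displacement of $S$ at the exit time $T_{-R}$: since the conditional hitting time is of order $dR^2$, the exit position $w$ may lie at distance of this order from $x$, and this is precisely what dictates the polynomial-in-$n$ requirement on $r$. Beyond this trade-off every step is routine: gambler's ruin on the first-coordinate projection, a Green's-function union bound, and the lex-enforced half-space containment $A \subseteq H^+$. The argument in fact furnishes the bound with no $o_d(1)$ correction, which is strictly stronger than claimed.
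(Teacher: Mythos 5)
Your argument is correct but takes a genuinely different route from the paper. The paper escapes $A\cup B$ via a chain of $k$ nested cubes with doubling side lengths, paying $(2d)^{-1}$ per cube; with $k\approx \log_2\big((2n)^{1/(d-2)}\big)$ this produces the factor $n^{-(1+\log_2 d)/(d-2)}$, which is the source of the $o_d(1)$ correction in the exponent. You instead run a gambler's-ruin argument on the first-coordinate projection, exploiting the lex-forced containment $A \subseteq \{y : y_1 \geq x_1\}$: the probability of reaching level $-R$ before re-entering the half-space is exactly $1/R$, so with $R\asymp n^{1/(d-2)}$ you get the cleaner bound $c(d)\,n^{-1/(d-2)}$ with no $o_d(1)$ correction. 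The price is a larger separation threshold: to control the transverse displacement at the exit time you impose a time cutoff $K\asymp R^2$, forcing $r\gtrsim n^{2/(d-2)}$, whereas the paper's nested cubes permit $r\asymp n^{1/(d-2)}$. This is immaterial, since $r$ is allowed to be any function of $(n,d)$ and every downstream application takes the relevant separation sufficiently large in $d$ and $n$. Two small bookkeeping slips that do not affect the conclusion: the displayed condition ``$G(z)/G(o)\leq R^{2-d}/(8n)$ whenever $\|z\|\geq R$'' and the subsequent ``$c_1 n R^{2-d}\leq 1/8$'' conflate $c_1$ with the implicit constant from \eqref{eq green asymp}; the intended (and correct) statement is that $R\asymp n^{1/(d-2)}$ is large enough that $n\sup_{\|z\|\geq R} G(z)/G(o)\leq 1/8$. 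Also, the capacity bound $\capac_{A\cup B}\leq n/G(o)\leq n$ needs only the trivial $G(o)\geq 1$; Lemma~\ref{g5} bounds $G(o)$ from above and therefore gives a \emph{lower} bound on $n/G(o)$, which is not what you want here.
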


\begin{proof}
Suppose that $x$ is lex in $A$ and that there are positive integers $k$ and $\ell$ for which $\dist (A, B) \geq 2^k \ell \sqrt{d}$. We will bound below the probability that a random walk from $x$ escapes $A \cup B$ by bounding below the probability that it (1) takes $\ell$ steps in the $-e_1$ direction, then (2) exits a sequence of doubling cubes through their $-e_1$ directed faces until it is at least a distance of $2^{k-1} \ell$ from $A \cup B$, and (3) subsequently never returns to $A \cup B$.

First, a random walk from $x$ reaches $x-\ell e_1$ before returning to $A \cup B$ with a probability of at least $(2d)^{-\ell}$:
\begin{equation}\label{lex esc bd1}
\P_x (\tau_{x-\ell e_1} < \tau_{A \cup B}) \geq (2d)^{-\ell}.
\end{equation}
The random walk can do so, for example, by following $(x-e_1,x-2e_1,\dots, x - \ell e_1)$, which lies outside of $A \cup B$ because $x$ is lex in $A$ and because $B$ is a distance of at least $2^k \ell \sqrt{d} > \ell$ from $x$.

Second, for $u \in \Z^d$ and $\ell \in \Z_{\geq 1}$, define $Q (u,\ell)$ to be the open cube centered at $u$ and with side length $2\ell$, and denote the $-e_1$ directed face of its boundary by $F(u,\ell)$:
 \[
Q(u,\ell) = \left\{z \in \Z^d: \max_{1 \leq i \leq d} |z_i - u_i| < \ell \right\} \quad \text{and} \quad F(u,\ell) = \left\{z \in \bdy Q (u,\ell): z_1 = \min_{v \in \bdy Q (u,\ell)} v_1 \right\}.
 \]
We inductively define cube centers $X_j$, cubes $Q_j$, and faces $F_j$ according to $X_1 = x-\ell e_1$, 
\[
Q_j = Q(X_j,2^{j-1}\ell), \quad F_j = F(X_j,2^{j-1} \ell), \quad \text{and} \quad X_{j+1} = S_{\tau_{\bdy Q_j}}, \quad j \geq 1.
\]
When $\cap_{i=2}^j \{X_i \in F_{i-1}\}$ occurs, $X_j$ satisfies
\[
\dist (A, X_j) \geq 2^{j-1} \ell \quad \text{and} \quad \dist(B, X_j) \geq \dist (A,B) - 2^{j-1} \ell \sqrt{d}, \quad j \geq 1.
\]
Since the distance between $A$ and $B$ is at least $2^k \ell \sqrt{d}$, this bound implies that
\[
\dist (A \cup B, X_k) \geq 2^{k-1} \ell.
\]
Hence, if $C = \{z \in \Z^d: \dist(z, A \cup B) < 2^{k-1}\ell\}$, then
\begin{equation}\label{lex esc bd2}
\P_{X_1} (\tau_{\bdy C} < \tau_{A \cup B}) \geq \P_{X_1} (\cap_{j=2}^k \{X_j \in F_{j-1}\}) = (2d)^{-(k-1)}.
\end{equation}
The equality follows the strong Markov property applied to $\tau_{\bdy F_{j}}$ for each $j \geq 1$ and the fact that $\P_{X_{j}} (X_{j+1} \in F_{j}) = (2d)^{-1}$, by symmetry.

Third, when $\{\tau_{\bdy C} < \tau_{A \cup B}\}$ occurs, the probability that the random walk never returns to $A \cup B$ is at least the minimum of $\P_z (\tau_{A \cup B} = \infty)$ over $z \in \bdy C$. The distance from $\bdy C$ to $A \cup B$ is at least $2^{k-1} \ell$. By Lemma~\ref{simple set hit}, there is a constant $\ell_0 = \ell_0 (d) \in \Z_{\geq 1}$ such that, if $\ell \geq \ell_0$, then  
\begin{equation}\label{lex esc bd3}
\P_x (\tau_{A \cup B} = \infty \mid \tau_{X_1} < \tau_{\bdy C} < \tau_{A \cup B}) \geq \min_{z \in \bdy C} \P_z \left( \tau_{A \cup B} = \infty \right) \geq 1 - n 2^{-k(d-2)}.
\end{equation}

Combining \eqref{lex esc bd1}, \eqref{lex esc bd2}, and \eqref{lex esc bd3}, and taking $\ell = \ell_0$, we find that
\begin{equation}\label{lex esc bd}
\Es_{A\cup B} (x) \gtrsim (2d)^{-k+1} (1 - n 2^{-k (d-2)}).
\end{equation}
If $n$ is at most $2^{d-3}$, then choosing $k=1$ in \eqref{lex esc bd} results in a constant lower bound, depending on $d$ only. Otherwise, if $n$ exceeds $2^{d-3}$, then we can take $k$ to be the integer part of $\log_2 ((2n)^{\frac{1}{d-2}})$, in which case \eqref{lex esc bd} gives
\begin{equation*}
\Es_{A \cup B} (x) \gtrsim n^{-\frac{1+ \log_2 (d)}{d-2}}.
\end{equation*}
Because $\capac (A \cup B)$ is at most $n G(o)^{-1}$, the preceding bound implies that
\begin{equation*}\label{cube bd6}
\H_{A \cup B} (x) \gtrsim n^{-\frac{d-1 + \log_2 (d)}{d-2}}.
\end{equation*}
We conclude the proof by setting $r = 2^{k_0}\ell_0 \sqrt{d}$, where $k_0$ is the larger of $1$ and the integer part of $\log_2 ((2n)^{\frac{1}{d-2}})$.
\end{proof}

We apply the preceding lemma to prove the following conditional hitting estimate.

\begin{lemma}\label{lex hm2}
Let $d \geq 5$. There are constants $r = r (n,d)$ and $c = c(d)$ such that if $x$ is lex in $A$, if $A \cup B \in \Conf_{d,n}$ such that $\dist (A,B) \geq r$, and if $B$ can be written as a disjoint union $B^1 \cup B^2$ where $|B^1| \leq 3$ and $\dist (B^1, B^2) \geq r$, then
\begin{equation}\label{eq lex hm2}
\P_x \big( S_{\tau_{A\cup B}} \in B^1 \bigm\vert \tau_{A \cup B} < \infty \big) \geq c n^{-\frac{d}{d-2}-o_d(1)} \diam (A \cup B)^{2-d}.
\end{equation}
For concreteness, the $o_d (1)$ quantity is smaller than $1.6$ when $d \geq 5$.
\end{lemma}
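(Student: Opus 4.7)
The plan is to reduce the statement to a bound on the joint probability $\P_x(\tau_F^+ < \infty, \, S_{\tau_F^+} \in B^1)$ where $F = A \cup B$ and $\tau_F^+ = \tau_{A \cup B}$, since the denominator $\P_x(\tau_F^+ < \infty)$ is at most $1$. Writing $D = \diam(A \cup B)$, the goal becomes to show this joint probability is at least $c\, n^{-d/(d-2)-o_d(1)} D^{2-d}$.

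The strategy I would take is an escape-and-return argument. Let $R = r D$ for a large absolute constant $r$, and let $T_R$ be the first time the random walk sits at distance at least $R$ from $F$. Applying the strong Markov property at $T_R$ gives
\[
\P_x\big(\tau_F^+ < \infty,\, S_{\tau_F^+} \in B^1\big) \;\geq\; \P_x\big(T_R < \tau_F^+\big) \cdot \inf_{y:\,\dist(y,F) \geq R} \P_y\big(S_{\tau_F} \in B^1\big),
\]
so it suffices to bound each factor.

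For the first factor, I would observe that $\{T_R < \tau_F^+\} \supseteq \{\tau_F^+ = \infty\}$, since in dimension $d \geq 5$ a walk that never returns to $F$ is transient and therefore hits every distance from $F$. Hence $\P_x(T_R < \tau_F^+) \geq \Es_F(x)$, and Lemma \ref{lex hm} supplies the lower bound $\Es_F(x) \geq c\, n^{-1/(d-2)-o_d(1)}$, valid once $\rho \geq r(n,d)$. For the second factor, I would invoke the standard Green's function asymptotics $\P_y(\tau_F < \infty) = \capac(F) G(y-o)(1 + o(1))$ for $o \in F$ and $\|y-o\|$ large compared to $D$, combined with the harmonic-measure-from-infinity identity $\P_y(S_{\tau_F} = w) \sim \H_F(w) \P_y(\tau_F < \infty) = \Es_F(w) G(y-o)$. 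Taking $r$ large enough that these asymptotics hold with constant-factor slack, one has $G(y-o) \geq c D^{2-d}$ uniformly for $\dist(y,F) \geq rD$. Finally, to bound $\Es_F(w)$ for $w \in B^1$, I would apply Lemma \ref{min es} to the two-cluster decomposition $(B^1, A \cup B^2)$, whose separation is at least $\rho$; enlarging $\rho$ in terms of $n$ and $d$ makes the error term negligible and yields $\Es_F(w) \geq 1/4$. Summing $\P_y(S_{\tau_F} = w) \geq c D^{2-d}$ over the at least one element $w \in B^1$ then gives $\inf_y \P_y(S_{\tau_F} \in B^1) \geq c D^{2-d}$. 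Multiplying the two factors produces a bound of $c\, n^{-1/(d-2)-o_d(1)} D^{2-d}$, which is comfortably stronger than the claimed $n^{-d/(d-2)-o_d(1)} D^{2-d}$; the slack between the two exponents of $n$ (namely $n^{-(d-1)/(d-2)}$, a factor comparable to $\H_F(x)$ as in the second bound of Lemma \ref{lex hm}) is exactly what absorbs any constants and $o_d(1)$ terms in the asymptotic.

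The main obstacle I anticipate is making the Green's function asymptotic $\P_y(\tau_F < \infty) \geq c\, \capac(F) G(y-o)$ effective at the \emph{intermediate} regime $\|y - o\| \asymp D$ rather than $\|y - o\| \to \infty$. The error in this asymptotic is controlled by $\diam(F)/\|y-o\|$, so I must choose the multiplier $r$ in $R = rD$ large enough (depending only on $d$) to beat that error with a constant to spare. This is routine but has to be executed carefully, and it is the only place where the constants must be tracked; everything else (transience, reversibility/strong Markov, and the inputs from Lemmas \ref{lex hm} and \ref{min es}) is off-the-shelf.
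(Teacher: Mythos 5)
Your argument is, in essence, the paper's own: escape from $A \cup B$ to a sphere of radius $\approx r\,\diam(A\cup B)$ (using Lemma \ref{lex hm} for the escape probability), then invoke Green's function and hitting-distribution asymptotics to return and land in $B^1$. The one substantive difference is in the ``return'' factor. You lower-bound $\Es_{A\cup B}(w)$ for $w \in B^1$ by a dimension-dependent constant via Lemma \ref{min es} applied to the clustering $(B^1, A\cup B^2)$, which yields $n^{-\frac{1}{d-2}-o_d(1)} \diam(A\cup B)^{2-d}$; the paper instead lower-bounds $\H_{A\cup B}(B^1)$ by Lemma \ref{lex hm}, costing an extra factor of $n^{-\frac{d-1}{d-2}-o_d(1)}$ and landing exactly on the stated exponent $n^{-\frac{d}{d-2}-o_d(1)}$. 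Both deliver the lemma; yours is marginally tighter and arguably more economical since it reuses Lemma \ref{min es} rather than invoking Lemma \ref{lex hm} twice.

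One formulation needs correction. As written,
\[
\inf_{y:\,\dist(y,F) \geq R} \P_y\big(S_{\tau_F} \in B^1\big) = 0,
\]
because $\P_y(S_{\tau_F} \in B^1) \to 0$ as $\|y\| \to \infty$; the stated inequality is therefore vacuous, and the claim that $G(y-o) \geq c\,D^{2-d}$ holds ``uniformly for $\dist(y,F) \geq rD$'' is false. What saves the argument is that, because the walk moves by unit steps, $S_{T_R}$ lies in the annulus $\{y: \dist(y,F) \in [R, R+1]\}$, and it is on this annulus (equivalently, the boundary of the fattened set $F$ in the paper's notation) that the infimum must be taken and where $G(y-o) \asymp (rD)^{2-d}$ does hold uniformly. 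You plainly have this in mind---you flag the regime $\|y-o\| \asymp D$ as the delicate one---but the strong-Markov step should be stated with the localized infimum, as in \eqref{lex cond hit}.
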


\begin{proof}
Let $A$, $B$, $B^1$, and $B^2$ satisfy the hypotheses for the $r$ in the statement of Lemma \ref{lex hm}. Additionally, denote by $F$ the set of points within a distance $r \diam (A \cup B)$ of $A \cup B$. Applying the strong Markov property to $\tau_{F^\cc}$, we write
\begin{multline}\label{lex cond hit}
\P_x \big( S_{\tau_{A\cup B}} \in B^1 \bigm\vert \tau_{A \cup B} < \infty \big)\\
 \geq \E_x \Big[ \P_{S_{\tau_{F^\cc}}} \big( S_{\tau_{A \cup B}} \in B^1 \bigm\vert \tau_{A \cup B} < \infty \big) \P_{S_{\tau_{F^\cc}}} \big( \tau_{A \cup B} < \infty \big); \tau_{F^\cc} < \tau_{A \cup B}\Big].
\end{multline}

A standard result (e.g., \cite[Theorem 2.1.3]{lawler2013intersections}) implies that for all sufficiently large $r$, if $y$ belongs to $F^\cc$, then
\begin{equation}\label{lex cond hit0}
\P_y \big( S_{\tau_{A \cup B}} \in B^1 \bigm\vert \tau_{A \cup B} < \infty \big) \geq \frac12 \H_{A \cup B} (B^1).
\end{equation}
Because $B^1 \cup (A \cup B^2)$ satisfies the hypotheses of Lemma \ref{lex hm},
\begin{equation}\label{lex cond hit1}
\H_{A \cup B} (B^1) \gtrsim c_1 n^{-\frac{d-1}{d-2}-o_d(1)}.
\end{equation} 
By Lemma~\ref{simple set hit}, for any $y \in F^\cc$, we have
\begin{equation}\label{lex cond hit2}
\P_y \big( \tau_{A \cup B} < \infty \big) \gtrsim \dist (y,A \cup B)^{2-d}.
\end{equation}
Lastly, by Lemma \ref{lex hm}, the probability of hitting $F^\cc$ before returning to $A \cup B$ satifies
\begin{equation}\label{lex cond hit3}
\P_x (\tau_{F^\cc} < \tau_{A \cup B} ) \geq \Es_{A\cup B} (x) \gtrsim n^{-\frac{1}{d-2} - o_d(1)}.
\end{equation}

Applying \eqref{lex cond hit0} through \eqref{lex cond hit3} to \eqref{lex cond hit}, we conclude that
\[ 
\P_x \big( S_{\tau_{A\cup B}} \in B^1 \bigm\vert \tau_{A \cup B} < \infty \big)
\gtrsim n^{-\frac{d}{d-2}-o_d(1)} \diam (A \cup B)^{2-d}.
\] Here, $o_d(1)$ can be taken to be $1.6$ when $d \geq 5$.
\end{proof}

\section{Proof of Theorem \ref{thm form dot}}\label{sec form dot}

In this section, we will analyze three algorithms which, when applied sequentially, dictate a sequence of HAT steps to form a configuration that has an $(a,1)$ DOT clustering in $\Refcon^\times$, from an arbitrary configuration and for any sufficiently large $a$. Each subsection will contain the statement of an algorithm and two results:
\begin{enumerate}
\item Informally, the first result will conclude that the algorithm does what it is intended to do. 
\item The second will provide bounds on the number of steps and probability with which HAT realizes the steps dictated by the algorithm.
\end{enumerate}
The final subsection will combine the bounds. We fix $d \geq 5$ and $n \geq 2$ throughout.

\subsection{Algorithm 1}\label{stage1}

Algorithm $\mathcal{A}_{\ref{alg1}}$ takes a configuration $U \in \Conf_{d,n}$ and an integer $a\geq 1$ as input and returns a tuple of (one or more) configurations $\CC$ that can be lined-up with separation $dn^2 a$. This tuple represents a partition of a configuration into parts that are connected, have at least two elements, and are separated by at least $2dn^2 a$. These parts can be thought of as ``clumps'' of elements---in particular, they may not be line segments and they may have more than three elements---hence, $\CC$ may not belong to $\Refcon^\times$.

The algorithm attempts to treadmill (in the sense of Figure~\ref{treadmill}) pairs of elements in the $-e_1$ direction. To start, if the lex element $\ell$ of $U$ is non-isolated, then the algorithm treadmills $\ell$ and one of its exposed neighbors along the ray $(\ell-e_1, \ell-2e_1, \dots)$, which is empty of $U$ because $\ell$ is lex in $U$. Once these elements are sufficiently far from the rest of $U$, they become $\CC^1$, and the algorithm attempts to repeat this process with $U' = U \setminus \CC^1$ in the place of $U$. However, the next pair of elements is treadmilled less far, so that it is distant from both $\CC^1$ and $U'$.

Suppose, for example, that the lex element of $U'$ is isolated, in which case the algorithm activates this element, in an attempt to form a non-isolated lex element. However, it cannot dictate where the activated element is transported. For example, it may join $\CC^1$ or it may return to $U'$, where it is no longer isolated. The algorithm then repeats this process with the resulting $U'$, to see if its lex element is non-isolated. Note that it takes at most $n$ repetitions for the algorithm to identify a non-isolated lex element of $U'$ or for all of the elements to belong to $\CC^1$. In the former case, the algorithm proceeds to treadmill a pair of elements, calls the result $\CC^2$, and continues with $U'' = U \setminus (\CC_1 \cup \CC_2)$. In the latter case, it returns $\CC^1$.

The key property of the sequence of HAT steps dictated by this algorithm is that they occur with a probability that is bounded below in a diameter-agnostic way. The next two algorithms form the ``clumps'' of elements in $\CC$ into line segments of between $2$ and $n$ elements, and then treadmill them apart, two or three elements at a time (Figure~\ref{treadmill2}).

Before stating Algorithm $\mathcal{A}_{\ref{alg1}}$, we give names to special elements that we reference in the algorithm. Suppose $U \in \Conf_d$ has a partition $\CC$ and let $x \in \Z^d$. If $x \in U$, then $\mu (U,x)$ denotes an arbitrary maximizer $y$ of $\P_x (S_{\tau_{U\setminus\{x\}}-1} = y \mid \tau_{U\setminus\{x\}} < \infty)$ over $\bdy (U{\setminus}\{x\})$. If $U \cap \bdy \{x\}$ is nonempty, then $\expo (U,x)$ denotes an arbitrary element $y \in \bdy \{x\}$ that is exposed in $U$. If $\{1 \leq i \leq \# \CC: \dist (\CC^i,x) \leq 1\}$ is nonempty, then $\near (\CC,x)$ denotes an arbitrary element thereof. 
Additionally, for a tuple of sets $\mathcal{D}$, we will use $\pi (\mathcal{D})$ to denote their union. 

\begin{algorithm}
\small
\NoCaptionOfAlgo
\DontPrintSemicolon
\SetAlgoNoLine
\LinesNumbered
\SetKwInOut{Input}{Input}
\SetKwInOut{Output}{Output}
    \Input{$U \in \Conf_{d,n}$ and $a \in \Z_{\geq 1}$}
    \Output{$\CC \in \Conf_{d,n}^\times$ that can be lined-up with separation $dn^2 a$}
 $\CC \leftarrow \emptyset, \quad i \leftarrow 1$ \tcp*{Initialize variables.}
    
 \While{$U$ is nonempty}{
 $\ell \leftarrow \lex (U)$\;
 $R \leftarrow \dist \big( \ell, U {\setminus} \{\ell \} \big), \quad  r \leftarrow 3d(n-i+1)^3 a$\;
  \tcc{Form a non-isolated lex element if need be.}
  \While{$R>1$ and $n>1$}{
    $x \leftarrow \mu (U \cup \pi (\CC), \ell)$ \tcp*{$\ell$ will be replaced by $x$.}
    \tcc{$x$ either neighbors $U$ \dots}
    \eIf{$x \in \bdy U$}{
    $U \leftarrow (U \cup \{x\}) {\setminus} \{\ell\}$
    }
    {\tcc{\dots or one of the existing clusters.}
    $U \leftarrow U {\setminus} \{\ell\}, \quad j \leftarrow \near (\CC,x)$\;
    $\CC \leftarrow \CC \cup^j \{x\}$\;
    }
    $\ell \leftarrow \lex (U)$ \tcp*{The lex element of $U$ may have changed.}
    $R \leftarrow \dist (\ell, U {\setminus} \{\ell\})$\;
  }
  \tcc{If the lex element is non-isolated, treadmill it.}
  \eIf{$R=1$}{
    $y \leftarrow \expo (U \cup \pi (\CC), \ell)$ \tcp*{$y$ is an exposed neighbor of $\ell$.}
    $U \leftarrow U {\setminus} \{\ell,y\}$ \tcp*{Remove the pair from $U$.}
    $\CC \leftarrow \CC \cup^i \big\{\ell - r e_1, \ell - (r-1) e_1\big\}$ \tcp*{Treadmill the pair $r$ steps.}
    $i \leftarrow i + 1$\tcp*{Prepare to form the next cluster.}
  }
  {\tcc{Otherwise, $U = \{\ell\}$; add it to an existing cluster.}
    $x \leftarrow \mu (U \cup \pi (\CC), \ell)$ \tcp*{$\ell$ will be replaced by $x$.}
    $U \leftarrow U {\setminus} \{\ell\},\quad j \leftarrow \near (\CC,x)$\;
    $\CC \leftarrow \CC \cup^j \{x\}$\;
  }
  
  }
 \Return $\CC$
 \;
  \caption{Algorithm $\mathcal{A}_1$}
\label{alg1}
\end{algorithm}

To realize stage (1) of the strategy of Section \ref{sec strat 11}, we must show that $\mathcal{A}_{\ref{alg1}}$ produces a configuration which can be lined-up (Definition \ref{def lined up}) and then show that HAT forms this configuration in a number of steps and with at least a probability which do not depend on the initial configuration. The following result addresses the former.

\begin{proposition}\label{alg1 works}
If $U \in \Conf_{d,n}$ and $a \in \Z_{\geq 1}$, then $\mathcal{A}_{\ref{alg1}} (U,a)$ can be lined-up with separation $dn^2a$.
\end{proposition}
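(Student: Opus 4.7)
I will verify the three conditions of Definition~\ref{def lined up}: every cluster of $C = \mathcal{A}_{\ref{alg1}}(U,a)$ has at least two elements, every cluster is connected, and $\sep(C) \geq 2 dn^2 a$. Let $C^1,\dots,C^m$ denote the clusters in the order in which they are created, and let $\ell_i$ and $r_i = 3d(n-i+1)^3 a$ denote the values of $\ell$ and $r$ at the moment the $R=1$ branch creates cluster $i$. A short induction on the algorithm's updates to $C$ preserves the invariants (a) $U \cap \pi(C) = \emptyset$, (b) every cluster is connected, and (c) every cluster has at least two elements. Each cluster is born as the dimer $\{\ell_i - r_i e_1,\, \ell_i - (r_i-1) e_1\}$; the only other modification has the form $C \cup^j \{x\}$ with $j = \near(C,x)$, which by definition satisfies $\dist(x, C^j) \leq 1$, and together with (a) implies $x \notin C^j$, so the update preserves connectivity and strictly enlarges the cluster. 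In particular $|C^j| \leq n$ and $\diam(C^j) \leq n - 1$.

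\textbf{Separation.} Since each cluster consumes at least two elements of $U_0$, we have $m \leq \lfloor n/2 \rfloor$, so every cluster index $i$ satisfies $i \leq n/2$. By connectivity and the size bound, the first coordinates of cluster $i$'s elements lie in $[(\ell_i)_1 - r_i - n,\, (\ell_i)_1 - r_i + n]$ (the initial dimer sits at first coord $(\ell_i)_1 - r_i$ or $(\ell_i)_1 - r_i + 1$, and each subsequent addition extends the range by at most $1$). Applying $p^3 - q^3 \geq 3(p-q)q^2$ with $p = n-i+1$, $q = n-j+1$,
\[
r_i - r_j \;\geq\; 9d(j-i)(n-j+1)^2 a \;\geq\; \tfrac{9}{4}\, dn^2 a \qquad \text{for } i < j \leq m,
\]
using $n - j + 1 \geq n/2$. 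Consequently, the first-coordinate gap between the ``bases'' of clusters $i$ and $j$ is at least $\tfrac{9}{4} dn^2 a - ((\ell_i)_1 - (\ell_j)_1) - 2n$, which exceeds $2dn^2 a$ whenever the drift satisfies $(\ell_i)_1 - (\ell_j)_1 \leq \tfrac{1}{4} dn^2 a - 2n$.

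\textbf{Main obstacle.} The heart of the proof is therefore a bound on the lex drift $(\ell_i)_1 - (\ell_j)_1$ across outer iterations $i < j$ (and, en route, a proof that the inner loop terminates). Between outer iterations, removing $\ell$ and (in the $R=1$ branch) its neighbor $y$ from $U$ can only increase the minimum first coordinate of $U$; the only source of $-e_1$-directed drift is the inner-loop branch ``$x \in \bdy U$'', where the updated lex may be $x$ with $x_1 = (\ell)_1 - 1$. I expect the analysis to exploit that $x = \mu(U \cup \pi(C), \ell)$ maximizes a conditional hitting probability of random walk from $\ell$, and that because $\ell$ is the lex-minimum, the mass of $U \cup \pi(C) \setminus \{\ell\}$ is contained in the half-space $\{z_1 \geq (\ell)_1\}$; this forces $x_1 \geq (\ell)_1$ except in atypical ``side-entry'' geometries. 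A careful geometric/random-walk argument should bound the total number of drift-producing inner iterations by a polynomial in $n$, giving total drift well inside the $\tfrac{1}{4} dn^2 a - 2n$ slack for $a \geq 2$, $d \geq 5$, $n \geq 4$, and simultaneously showing that the inner loop terminates after finitely many steps.
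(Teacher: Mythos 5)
You correctly reduce the problem to the three conditions of Definition~\ref{def lined up}, and your bookkeeping for cluster sizes, connectivity, and the base-gap estimate $r_i - r_j \geq \tfrac{9}{4}dn^2 a$ is sound. You also correctly identify the one real subtlety, namely that the lex $\ell$ can drift in the $-e_1$ direction between the placements of successive dimers, a point the paper's own proof passes over silently. But the proposal leaves that key step unresolved: you only say that ``a careful geometric/random-walk argument should bound the total number of drift-producing inner iterations by a polynomial in $n$.'' That is the gap, and the heuristic you offer for closing it does not work. In particular, the assertion that $U \cup \pi(C) \setminus \{\ell\}$ is contained in the half-space $\{z_1 \geq (\ell)_1\}$ is false: $\ell$ is lex in $U$, not in $U \cup \pi(C)$, and by construction the clusters already placed in $\pi(C)$ lie far in the $-e_1$ direction from $U$. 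So any argument built on $\ell$ being extremal in $U \cup \pi(C)$ is doomed.

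The resolution is in fact purely deterministic and needs no random-walk input. First, the inner \textbf{while} loop runs at most $n$ times within a single outer iteration: each pass through the loop removes the (isolated) lex element $\ell$ from $U$, and either adds a non-isolated element $x \in \bdy U$ (line 8) or moves $\ell$ to a cluster (lines 10--11); in both cases the count of isolated elements of $U$ strictly decreases, and that count starts at most $n$. Second, in the drift-producing branch (line 8), $x \in \bdy U$ is adjacent to $U$, and every element of $U$ has first coordinate at least $(\ell)_1$ because $\ell = \lex(U)$; hence $x_1 \geq (\ell)_1 - 1$, so the lex first coordinate drops by at most $1$ per inner iteration. With at most $n$ outer iterations, the total lex drift in the $-e_1$ direction is at most $n^2$, which is comfortably below the slack $\tfrac{1}{4}dn^2 a - 2n$ for $d \geq 5$, $n \geq 4$, $a \geq 2$. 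Plugging this into your separation estimate completes the argument. Without some form of this bound (or the equivalent counting in the paper's Claim 1 of Proposition~\ref{follow alg1}), the proposal does not constitute a proof.
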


\begin{proof}
Consider the tuple of configurations $\CC$ in line {\bf 27} of algorithm $\mathcal{A}_{\ref{alg1}}$. As every element of $U$ is eventually treadmilled with another element, or is transported next to another element, every entry of $\CC$ is connected and has two or more elements. To prove that $\CC$ can be lined-up with separation $r = dn^2 a$, we must additionally show that $\CC$ is $2r$ separated. 
Ignoring those elements that were assigned to clusters in lines {\bf 10} and {\bf 23}, due to line {\bf 19}, clusters $i < j$ are separated by at least
\[ 3d(n-i+1)^3 a - 3d(n-j+1)^3 a.\]
Because there are at most $[n/2]$ clusters, the preceding expression is at least
\[ 3d(n/2+1)^3 a - 3d(n/2)^3 a \geq 2d n^2 a + n.\]
At most $n$ elements are added to clusters by executing lines {\bf 10} and {\bf 23}. Because the clusters are connected, the preceding bound implies that the pairwise separation of clusters must be at least $2r = 2dn^2a$.
\end{proof}

We now verify that HAT realizes $\pi \big( \mathcal{A}_{\ref{alg1}} (U,a) \big)$ in a number of steps and with at least a probability which do not depend on $U$. We assume that $a$ is an integer for convenience.

\begin{proposition}\label{follow alg1}
Let $U \in \Conf_{d,n}$. There exists $\alpha_1 = \alpha_1 (d,n) \in \Z_{\geq 1}$ such that, if $a \geq \Z_{\geq \alpha_1}$, then there are $N_1 = N_1 (a,d,n) \in \N$ and $p_1 = p_1 (a,d,n) \in (0,1]$ such that 
\begin{equation}\label{eq follow alg1}
\PP_U \Big( U_{N_1} = \pi \big( \mathcal{A}_{\ref{alg1}} (U,a) \big) \Big) \geq p_1.
\end{equation} 
\end{proposition}

\begin{proof}
The proof takes the form of an analysis of Algorithm $\mathcal{A}_{\ref{alg1}}$. Denote by $u_{k}$ the configuration $U \cup \pi (\CC)$ after the $k$\textsuperscript{th} time $U \cup \pi (\CC)$ is changed (i.e., an element is moved) by the algorithm. Additionally, denote by $M$ the number of times the configuration changes before the outer {\bf while} loop terminates. 

To establish \eqref{eq follow alg1}, it suffices to show that there is a sequence of times $t_0 = 0 \leq t_1 < t_2 < \cdots < t_M \leq N_1$ such that $u_{0} = U$, $u_{M} = \pi \big( \mathcal{A}_{\ref{alg1}} (U,a) \big)$, and
\begin{equation}\label{alg bd1}
\PP_{u_0} \big( U_{t_1} = u_1, U_{t_2} = u_2, \dots, U_{t_M} = u_{M} \big) \geq p_1.
\end{equation}

We will argue that $M \leq n(n+1)$, that we can take $t_M = n(n+1)r_1$ for $r_1 = 3dn^3 a$, and that
\begin{equation}\label{alg bd22}
\PP_{u_{k-1}} \big( U_{t_k} = u_k \big) \geq q,
\end{equation}
for each $k \in \{1,\dots,M\}$, for a constant $q = q(a,d,n) > 0$. The Markov property then implies that \eqref{alg bd1} holds with $N_1 = n(n+1)r_1$ and $p_1 = q^{n(n+1)}$.

{\em Claim 1}. We claim that $M \leq n(n+1)$. Observe that the outer and inner {\bf while} loops starting on lines {\bf 2} and {\bf 5} each repeat at most $n$ times. Indeed, $U$ loses an element every time the outer loop repeats, which can happen no more than $n$ times. Concerning the inner loop, no non-isolated element is made to be isolated, while, each time line {\bf 6} is executed, the isolated element $\ell$ is replaced by an element $x$ which is non-isolated. This can happen at most $n$ times consecutively. Accordingly, $U \cup \pi (\CC)$ changes at most $n+1$ times every time the outer loop repeats, hence $M \leq n (n+1)$.

{\em Claim 2}. We now claim that we can take $t_M = n(n+1)r_1$. It suffices to argue that, each time $U \cup \pi (\CC)$ changes, at most $r_1$ steps of HAT are required to realize the change. The configuration $U \cup \pi (\CC)$ changes due to the execution of lines {\bf 8}, {\bf 10} and {\bf 11}, {\bf 18} and {\bf 19}, or {\bf 23} and {\bf 24}. In all but one case---that of lines {\bf 18} and {\bf 19}---the transition requires only one HAT step. For lines {\bf 18} and {\bf 19}, at most $r_1$ steps are needed. Because there are at most $n(n+1)$ changes, $t_M$ can be taken to be $n(n+1)r_1$.

{\em Claim 3}. We now verify \eqref{alg bd22} by considering each way $U \cup \pi (\CC)$ can change and by bounding below the probability that it is realized by HAT. Assume $U \cup \pi (\CC)$ has changed $k-1$ times so far.
\begin{itemize}
\item Lines {\bf 8}, {\bf 10} and {\bf 11}, or {\bf 23} and {\bf 24}: Activation at $\ell$ and transport to $x$. Assume that $a$ is sufficiently large in $d$ and $n$ to exceed the constant $r$ in the statement of Lemma \ref{lex hm}. Then, since $\ell$ is the lex element of $U$ and since $\dist (U, \pi (\CC)) \geq a$, we can apply Lemma \ref{lex hm} with $A = U$ and $B = \pi (\CC)$ to find that
\begin{equation}\label{h lw bd} 
\H_{U \cup \pi (\CC)}(\ell) \geq h,
\end{equation}
for a positive number $h = h(d,n)$. By the definition of $\mu$, $x$ is the most likely destination of an element activated at $\ell$. Transport from $\ell$ occurs to at most $2dn$ sites and so, by the pigeonhole principle, the element from $\ell$ is transported to $x$ with a probability of at least $(2dn)^{-1}$. Together, these bounds imply that
\begin{equation}\label{claim3 bd1}
\PP_{u_{k-1}} \big( U_{t_k} = u_k \big) \geq h (2dn)^{-1}.
\end{equation}
\item Lines {\bf 18} and {\bf 19}: Treadmilling of $\{\ell,y\}$. In the first step, we activate at $y$ and transport to $\ell-e_1$. While $y$ is not lex in $U$, by the definition of $\expo$, it is an exposed neighbor of $\ell$. Because $\dist (U, \pi (\CC)) \geq a$, if $a$ is at least $4^d n$, we can apply Lemma \ref{lem hm comp} with $A = U$ and $B = \pi (\CC)$ to conclude that $\H_{U \cup \pi (\CC)} (y)$ is at least $c_1 h$ from \eqref{h lw bd}, for a positive number $c_1 = c_1 (d,n)$. Additionally, Lemma \ref{kesten cor} implies that an element activated at $y$ is transported to $\ell-e_1$ with a probability of at least $c_2 = c_2 (d,n)$. Consequently, denoting
\[
v_1 = \big( u_{k-1} \cup \{\ell-e_1,\ell\} \big) {\setminus} \{\ell,y\},
\]
we have
\begin{equation}\label{claim3 bd2}
\PP_{u_{k-1}} (U_1 = v_1) \geq c_1 c_2 h.
\end{equation}

Now, consider the configuration $v_m$ resulting from starting at $u_{k-1}$ and treadmilling $\{\ell,y\}$ a total of $m \geq 2$ steps in the $-e_1$ direction:
\[ v_m = \big( u_{k-1} \cup \{\ell-me_1, \ell - (m-1)e_1\} \big) {\setminus} \{\ell,y\}.\]
To obtain $v_{m+1}$, we activate at $\ell-(m-1)e_1$ and transport to $\ell-(m+1)e_1$. By the same reasoning as before,
\begin{equation}\label{claim3 bd3}
\PP_{v_m} \big( U_1 = v_{m+1} \big) \geq c_1c_2 h.
\end{equation}
By \eqref{claim3 bd2}, \eqref{claim3 bd3}, and the Markov property,
\begin{equation}\label{claim3 bd4}
\PP_{u_{k-1}} \big( U_{t_k} = u_k \big) \geq \big(c_1 c_2 h \big)^{r_1}.
\end{equation}
\end{itemize}

The bounds \eqref{claim3 bd1} and \eqref{claim3 bd4} show that, whenever $U \cup \pi (\CC)$ changes, the change can be realized by HAT (in one or more steps) with a probability of at least
\[ q = \min \big\{ h (2dn)^{-1}, (c_1c_2h)^{r_1} \big\}. \]
This proves \eqref{alg bd22}. We complete the proof by combining claims 1--3. Note that, to apply Lemma~\ref{lem hm comp} and Lemma~\ref{lex hm}, we assumed that $a$ was at least $\alpha_1 \in \Z_{\geq 1}$, where $\alpha_1$ is an integer which is at least $4^d n$ and the constant $r = r(d,n)$ from Lemma~\ref{lex hm}.
\end{proof}

\subsection{Algorithm 2}\label{sec rearrange}

Algorithm $\mathcal{A}_{\ref{alg2}}$ takes as input a tuple of configurations $\CC \in \Conf_{d,n}^\times$ that can be lined-up with separation $dn^2 a$ for an integer $a \geq 1$. In other words, the entries of $\CC$ are connected sets with at least two elements and a separation of at least $2dn^2 a$. The algorithm dictates a sequence of HAT steps that form the tuple
\begin{equation}\label{lined up tuple}
\mathcal{L} (C) = \big( \lex (\CC^i) + L_{|\CC^i|} \big)_{i=1}^{\# \CC} \in \Conf_{d,n}^\times,
\end{equation}
where $L_k$ denotes the line segment of length $k$ from the origin to $-(k-1)e_1$ \eqref{line segment}. 

For each $\CC^i$, the algorithm activates an exposed element of $\CC^i$ and transports it to $\lex (\CC^i) - e_1$. Then, among the exposed elements that have not yet been activated, the algorithm selects one (we arbitrarily choose the lex one) and transports it to $\lex (\CC_i) - 2e_1$, and so on, until every element of $\CC^i$ except $\lex (\CC^i)$ has been activated. This completes Stage 2 of the strategy of Section \ref{sec strat 11}.\\

{\centering
\begin{minipage}{\linewidth}
\begin{algorithm}[H]
\small
\NoCaptionOfAlgo
\DontPrintSemicolon
\SetAlgoNoLine
\LinesNumbered
\SetKwInOut{Input}{Input}
\SetKwInOut{Output}{Output}
    \Input{$\CC \in \Conf_{d,n}^\times$ that can be lined-up with separation $dn^2 a$, for some $a \in \Z_{\geq 1}$.}
    \Output{$\mathcal{L}(C)$ \eqref{lined up tuple}.}

 \For{$i \in \{1,\dots,\# \CC\}$}{
 $\ell_i \leftarrow \lex (\CC^i)$ \tcp*{The segment will grow from $\ell_i$.}
    \For{$j \in \{1,\dots,|\CC^i|-1\}$}{
        $x_j \leftarrow \lex \big( \big\{ z \in \CC^i {\setminus} \{\ell_i + L_j\}: \H_{\pi (\CC)} (z) > 0 \big\} \big)$ \tcp*{$x_j$ is lex among exposed elements of $\CC^i$ which have not yet been added to the growing segment.}
        $y_j \leftarrow \ell_i - j e_1$ \tcp*{$y_j$ is the next addition to the segment.}
        $\CC \leftarrow (\CC \sd{i} \{x_j\}) \cup^i \{y_j\}$ \tcp*{Update the $i$\textsuperscript{th} cluster.}
    }
  }

 \Return $\CC$
  \caption{Algorithm $\mathcal{A}_{\ref{alg2}}$}
\label{alg2}
\end{algorithm}
\end{minipage}
}

\begin{proposition}\label{alg2 works}
Let $a \in \Z_{\geq 1}$. If $\CC \in \Conf_{d,n}^\times$ can be lined-up with separation $dn^2 a$, then $\mathcal{A}_{\ref{alg2}} (\CC) = \mathcal{L} (\CC)$.
\end{proposition}

\begin{proof}
The only way that the algorithm could fail to produce $\mathcal{L} (\CC)$ is if, for some outer {\bf for} loop $i$ and inner {\bf for} loop $j$, the assignment in line {\bf 5} is impossible. This would mean that no element of $\mathcal{D} = \CC^i {\setminus} \{\ell_i + L_j\}$ was exposed in $\pi (\CC)$. While there must be an element of $\mathcal{D}$ which is exposed in $\CC^i$, the elements of $\CC^{\neq i}$ could, in principle, separate $\mathcal{D}$ from $\infty$. In fact, as we argue now, this cannot occur because the clusters remain far enough apart while the algorithm runs.

Each $\CC^i$ remains connected while the algorithm runs, so there is a ball $B_i$ of radius $n$ which contains $\CC^i$. The $B_i$ are finite and $\ast$-connected, so each $\ast$-visible boundary $\bdyvis B_i$ is connected by Lemma~\ref{kesten result}. Moreover, each $\bdyvis B_i$ is disjoint from $\cup_j B_j$ because $\dist (B_i, B_j)$ exceeds $\sqrt{d}$. This lower bound holds because the clusters are initially $2dn^2 a$ separated and the separation decreases by at most one with each of the $n$ loops of the algorithm, hence
\[
\dist (B^i, B^j) \geq \dist (\CC^i, \CC^j) - \diam (B_i) - \diam (B_j) - n \geq 2dn^2 a - 5n > \sqrt{d}.
\]

The rest of the argument, which constructs an infinite path from $\mathcal{D}$ which otherwise avoids $\CC$, is identical to the corresponding step in the proof of Proposition~\ref{help for at pairs}. We conclude that some element of $\mathcal{D}$ is exposed in $\pi (\CC)$, which completes the proof.
\end{proof}

\begin{proposition}\label{rearrange small}
There exists $\alpha_2 = \alpha_2 (d,n) \in \Z_{\geq 1}$ such that, if $a \in \Z_{\geq \alpha_2}$, then there is $p_2 = p_2 (d,n) \in (0,1]$ such that, if $\CC \in \Conf_{d,n}^\times$ can be lined-up with a separation of $dn^2 a$, then  
\begin{equation}\label{eq rearrange small}
\PP_{\pi (\CC)} \Big( U_n = \pi \big( \mathcal{A}_{\ref{alg2}} (\CC) \big) \Big) \geq p_2.
\end{equation}
\end{proposition}

\begin{proof}
Given a tuple $\CC \in \Conf_{d,n}^\times$ that satisfies the hypotheses, algorithm $\mathcal{A}_{\ref{alg2}}$ specifies for each cluster $i$ a sequence of $|\CC^i|-1$ pairs $(x_j,y_j)$, where $x_j$ is the site of activation and $y_j$ is the site to which transport occurs, to rearrange $\CC^i$ into $\lex (\CC^i) + L_{|\CC^i|}$. We note that no pair will result in a decrease in cluster separation of more than one, or an increase in cluster diameter of more than one. Because the clusters are initially $2 dn^2 a$ separated, the clusters will remain $2dn^2 a - n \geq a$ separated throughout.

Let $\alpha_2$ be an integer at least as large as $4^dn$ and the constant $r$ from Lemma~\ref{lex hm}. Accordingly, if $a \geq \Z_{\geq \alpha_2}$, then the combination of Lemma \ref{lem hm comp} and Lemma \ref{lex hm} implies that there is a constant $h = h (d,n)$ such that each $x_j$ can be activated with a probability of at least $h$. Moreover, Lemma~\ref{kesten cor} implies that there is a positive number $c = c (d,n)$ such that an element from $x_j$ can be transported to $y_j$ with a probability of at least $c$. Consequently, denoting $\CC' = (\CC \sd{i} \{x_j\}) \cup^i \{y_j\}$, the transition in line {\bf 7} occurs with a probability of at least
\begin{equation}\label{a2 bd1}
\PP_{\pi (\CC)} \big( U_1 = \pi (\CC') \big) \geq c h.
\end{equation}
By \eqref{a2 bd1} and the Markov property, and the fact that there are at most $n$ pairs, we have
\[
\PP_{\pi (\CC)} \Big( U_n = \pi \big( \mathcal{A}_{\ref{alg2}} (\CC) \big) \Big) \geq (ch)^n.
\] Taking $p_2 = (ch)^n$ gives \eqref{eq rearrange small}.
\end{proof}

\subsection{Algorithm 3}\label{sec alg3}

At the beginning of Stage 3, the elements are neatly arranged into well separated line segments pointing in the $-e_1$ direction. In Stage 3, we iteratively treadmill pairs of elements in the $-e_1$ direction from each of the line segments, until only a dimer or trimer remains of the initial segment (Figure~\ref{treadmill2}). We will label each treadmilled pair as a new cluster. To reflect this in our notation, when $A \in \Conf_d$, we will write $\CC \cup^{\# \CC +1} A$ to mean $(\CC^1, \dots, \CC^{\# \CC}, A)$.

{\centering
\begin{minipage}{\linewidth}
\begin{algorithm}[H]
\small
\NoCaptionOfAlgo
\DontPrintSemicolon
\SetAlgoNoLine
\LinesNumbered
\SetKwInOut{Input}{Input}
\SetKwInOut{Output}{Output}
    \Input{$\DD = \mathcal{L}(\CC)$ \eqref{lined up tuple}, for $\CC \in \Conf_{d,n}^\times$ that can be lined-up with separation $n^2 a$, for some $a \in \Z_{\geq 1}$.}
    \Output{An $(a,2 (\log a)^{-1})$ DOT clustering $\DD$ in $\Refcon^\times$.}

$k \leftarrow 1$ \tcp*{$k$ counts the number of new clusters.}
 \For{$i \in \{1,\dots,\# \DD\}$}{
    \For{$j \in \{1,\dots,[|\DD^i|/2]-1\}$}{
        $\ell \leftarrow \lex (\DD^i),\quad r \leftarrow 2(n-j)a$\; 
        $\DD \leftarrow \big( \DD \sd{i} \{\ell,\ell+e_1\} \big) \cup^{\# \DD +k} \{\ell - re_1, \ell - (r-1)e_1\}$ 
        \tcp*{Treadmill the pair $r$ steps, labeling it as cluster $\# \DD+k$.}
        $k \leftarrow k+1$ \tcp*{Account for the creation of a new cluster.}
    }
  }
 \Return $\DD$
  \caption{Algorithm $\mathcal{A}_{\ref{alg3}}$}
\label{alg3}
\end{algorithm}
\end{minipage}
}

\begin{proposition}\label{alg3 works}
Let $a \in \Z_{\geq 1}$. If $\CC \in \Conf_{d,n}^\times$ can be lined-up with separation $n^2 a$, then $\mathcal{A}_{\ref{alg3}} (\mathcal{L}(\CC),a)$ is an $(a,2 (\log a)^{-1})$ DOT clustering in $\Refcon^\times$.
\end{proposition}

\begin{proof}
Denote by $\DD_k$ the clustering $\DD_0 = \mathcal{L}(\CC)$ once it has been changed by the algorithm for the $k$\textsuperscript{th} time (i.e., the $k$\textsuperscript{th} time line {\bf 5} is executed). Denote by $M$ the number of times algorithm $\mathcal{A}_{\ref{alg3}}$ changes $\DD_0$.

To prove that $\mathcal{A}_{\ref{alg3}} (\DD_0,a)$ is an $(a,2(\log a)^{-1})$ DOT clustering in $\Refcon^\times$, we will verify that $\DD_M$ satisfies the DOT condition \eqref{two or three elems}, that $\sep (\DD_M) \geq a$, and that each cluster of $\DD_M$ is a connected line segment parallel to $e_1$. These conditions imply that the $(a,b)$ separation conditions \eqref{abs rel sep} hold with this $a$ and $b = 2(\log a)^{-1}$.

Concerning \eqref{two or three elems} and the claim that each cluster of $\DD_M^i$ is a connected line segment parallel to $e_1$, we note that line {\bf 5} creates connected clusters of size two and, because it is executed $[ |\DD^i|/2 ] - 1$ times for cluster $i$, when the inner {\bf for} loop ends on line {\bf 7}, only two or three (connected) elements of the original cluster $\DD^i$ remain. Accordingly, every cluster of $\DD_M$ has two or three elements and is connected. It is clear from line {\bf 5} that, since the clusters of $\DD_0$ are line segments parallel to $e_1$, this is also true of $\DD_M$.

Concerning the separation of $\DD_M$, we observe that, for each $i \in \{1,\dots,\# \DD\}$, the separation of cluster $\DD_M^i$ is at least
\[\dist (\DD_M^i, \DD_M^{\neq i}) \geq \dist (\DD_0^i,\DD_0^{\neq i}) - 2(n-1)a \geq (n^2 - 2n + 2)a \geq a,\]
because the separation of $\DD_0$ is at least $n^2 a$ ($\CC$ can be lined-up with separation $n^2 a$) and because no element is moved a distance exceeding $2(n-1) a$ by the algorithm. The same is true of $\dist (\DD_M^i, \DD_M^j)$ for each $i$ and every $j$, and for clusters $i$ and $j$ resulting from the treadmilling of different clusters of $\DD_0$. Concerning the pairwise separation of clusters $i \neq j$ formed by treadmilling pairs from the same cluster of $\DD_0$, by line {\bf 5}, we have
\[ \dist (\DD_M^i, \DD_M^j) \geq 2(n-1)a - 2(n-2)a - 1 \geq a.\] We conclude that every cluster $i$ satisfies $\dist (\DD_M^i, \DD_M^{\neq i}) \geq a$, so $\sep (\DD_M) \geq a$.

The clusters of $\DD_M$ satisfy $\diam (\DD_M^i)$ because they are connected line segments of two or three elements. Since $\sep (\DD_M) \geq a$, $\DD_M$ satisfies $\diam (\DD_M^i) \leq b \log \dist(\DD_M^i, \DD_M^{\neq i})$ with $b = 2 (\log a)^{-1}$ \eqref{abs rel sep}. We conclude that $\DD_M$ is an $(a,b)$ DOT clustering in $\Refcon^\times$.
\end{proof}

\begin{proposition}\label{break lines}
There exists $\alpha_3 = \alpha_3 (d,n) \in \Z_{\geq 1}$ such that, if $a \in \Z_{\geq \alpha_3}$, then there are $N_3 = N_3 (a,d,n) \in \N$ and $p_3 = p_3 (a,d,n) \in (0,1]$ such that, if $\CC \in \Conf_{d,n}^\times$ can be lined-up with separation $n^2 a$, then 
\begin{equation}\label{eq rearrange small2}
\PP_{U} \Big( U_{N_3} = \pi \big( \mathcal{A}_{\ref{alg3}} (\mathcal{L}(\CC),a) \big) \Big) \geq p_3.
\end{equation}
\end{proposition}

\begin{proof}
As in the proof of Proposition \ref{alg3 works}, denote by $\DD_k$ the clustering $\DD_0 = \mathcal{L} (\CC)$ once it has been changed by the algorithm for the $k$\textsuperscript{th} time (i.e., the $k$\textsuperscript{th} time line {\bf 5} is executed). Denote by $M$ the number of times algorithm $\mathcal{A}_{\ref{alg3}}$ changes $\DD_0$. Call $u_k = \pi (\DD_k)$.

To establish \eqref{eq rearrange small2}, it suffices to show that there is a sequence of times $t_0 = 0 \leq t_1 < t_2 < \cdots < t_M \leq N_3$ such that $u_{0} = U$, $u_{M} = \pi \big( \mathcal{A}_{\ref{alg3}} (\DD_0,a) \big)$, and
\begin{equation}\label{alg3 bd1}
\PP_{u_0} \big( U_{t_1} = u_1, U_{t_2} = u_2, \dots, U_{t_M} = u_{M} \big) \geq p_3.
\end{equation}

Consider outer {\bf for} loop $i$, inner {\bf for} loop $j$, and suppose that $\DD_0$ has been changed a total of $k-1$ times thus far. Let $r = 2(n-j)a$. We will first bound below the probability that HAT realizes $u_k$ as $U_{r}$ from $u_{k-1}$ (i.e., the transition reflected in line {\bf 5}). HAT can realize this transition by treadmilling the elements at $\ell = \lex (\DD_{k-1}^i)$ and $\ell+e_1$ to $\{\ell - re_1,\ell-(r-1)e_1\}$.

For example, in the first step, we activate at $\ell+e_1$ and transport to $\ell-e_1$. As observed in the proof of Proposition \ref{alg3 works}, $\DD_k$ is $a$ separated for every $0 \leq k \leq M$. Denote by $\alpha_3$ an integer at least as large as $4^d n$ and the constant $r$ from Lemma~\ref{lex hm}. If $a \in \Z_{\geq \alpha_3}$, then the hypotheses of Lemma \ref{lem hm comp} and Lemma \ref{lex hm} are satisfied with $A = \pi (\DD_k^i)$ and $B = \pi (\DD_k^{\neq i})$, and they together imply the existence of a positive lower bound $h = h(d,n)$ on $\H_{u_{k-1}} (\ell+e_1)$. It is clear that the element at $\ell+e_1$ can be transported to $\ell-e_1$ with a probability of at least $c = c(d)$, and so, denoting
\[
v_s = \big( u_{k-1} \cup \{\ell-s e_1,\ell - (s-1) e_1 \} \big) {\setminus} \{ \ell,\ell+e_1\},
\]
we have
\begin{equation*}
\PP_{u_{k-1}} ( U_1 = v_1 ) \geq ch.
\end{equation*}

We can simply repeat this argument with $\ell$ and $\ell-e_1$ in the place of $\ell+e_1$ and $\ell$, then $\ell-e_1$ and $\ell-2e_1$, and so on. With the choice $u_k = v_r$, the Markov property implies
\begin{equation}\label{alg3 bd3}
\PP_{u_{k-1}} (U_{r} = u_k) \geq (ch)^{r}.
\end{equation}
The same bound \eqref{alg3 bd3} holds for any $k \in \{1, \dots, M\}$, so, by another use of the Markov property and the fact that $M \leq n$, we find
\begin{equation}
\PP_{u_0} \big( U_{r} = u_1, U_{2r} = u_2, \dots, U_{Mr} = u_M \big) \geq (ch)^{rn}.
\end{equation}
This proves \eqref{alg3 bd1} with $N_3 = nr$ and $p_3 = (ch)^{rn}$.
\end{proof}

\subsection{Conclusion}\label{sec 14 conc}

We now use the results from the preceding subsections to prove the main result of this section.

\begin{proof}[Proof of Theorem \ref{thm form dot}]
Let $U \in \Conf_{d,n}$. It suffices to prove the result when $a$ is sufficiently large, because if $\CC \in \Clust_{a,1}^\bullet$ and $0 < a' \leq a$, then $\CC \in \Clust_{a',1}^\bullet$. With this in mind, denote by $\alpha = \alpha (d,n)$ the largest of the integers $\alpha_1$ through $\alpha_3$ in Propositions \ref{follow alg1}, \ref{rearrange small}, and \ref{break lines}, and assume that $a \in \Z_{\geq \alpha}$. By Proposition \ref{follow alg1}, there are positive numbers $N_1 = N_1 (a,d,n)$ and $p_1 = p_1 (a,d,n)$ such that
\begin{equation}\label{thm14 conc1}
\PP_U \Big( U_{N_1} = \pi \big(\mathcal{A}_{\ref{alg1}} (U, a) \big) \Big) \geq p_1.
\end{equation}
By Proposition \ref{alg1 works}, $\CC_1 = \mathcal{A}_{\ref{alg1}} (U, a)$ can be lined-up with separation $d n^2 a$. Consequently, by Proposition \ref{rearrange small}, there is a positive number $p_2 = p_2 (d,n)$ such that
\begin{equation}\label{thm14 conc2}
\PP_{\pi (\CC_1)} \Big( U_{n} = \pi \big( \mathcal{A}_{\ref{alg2}} (\CC_1) \big) \Big) \geq p_2.
\end{equation}
By Proposition \ref{alg2 works}, $\CC_2$ equals $\mathcal{A}_{\ref{alg2}} (\CC_1) = \mathcal{L} (\CC_1)$. Since $\CC_2$ is $n^2 a$ separated, by Proposition \ref{break lines}, there are positive  numbers $N_3 = N_3 (a,d,n)$ and $p_3 = p_3 (a,d,n)$ such that
\begin{equation}\label{thm14 conc3}
\PP_{\pi (\CC_2)} \Big( U_{N_3} = \pi \big( \mathcal{A}_{\ref{alg3}} (\CC_2,a) \big) \Big) \geq p_3.
\end{equation}
Denote $\CC_3 = \mathcal{A}_{\ref{alg3}} (\CC_2,a)$. By the Markov property and \eqref{thm14 conc1} through \eqref{thm14 conc3},
\begin{equation}\label{thm14 conc4}
\PP_U \big( U_{N_1+n+N_3} = \pi (\CC_3) \big) \geq p_1 p_2 p_3.
\end{equation}
By Proposition \ref{alg3 works}, $\CC_3 \in \Clust_{a,b}^\bullet \cap \Refcon^\times$ with $b = 2 (\log a)^{-1}$. Note that $a \geq 4^d n$ (the constant in Lemma~\ref{lem hm comp}), so $b < 1$. Because $\CC_3$ is $(a,b)$ separated for $b < 1$, it is also $(a,1)$ separated. Setting $N = N_1+n+N_3$ and $p = p_1 p_2 p_3$ concludes the proof.
\end{proof}

\section{Proof of Theorem \ref{irred}}\label{sec irred}

Fix $d \geq 5$ and $n \geq 2$. To prove the irreducibility of HAT on $\wtnoniso_{d,n}$, we show that HAT can form a line segment from any configuration and HAT can form any configuration from a line segment. This is the content of the next two propositions. We state them in terms of $L_k$, which denotes the line segment of $k$ elements $\{-(j-1) e_1: 1 \leq j \leq k\}$ \eqref{line segment}, and $\rad (A) = \sup \{ \|x\|: x \in A\}$, the radius of a finite set $A \subset \Z^d$.

\begin{proposition}[Set to line]\label{cor line}
Let $V \in \Conf_{d,n}$. There are $N_4 = N_4 (d,n) \in \N$ and $p_4 = p_4 (d,n,\diam(V)) \in (0,1]$ such that
\begin{equation}\label{eq cor line}
\PP_{V} \big( \wt{U}_{N_4} = \wt{L}_n \big) \geq p_4.
\end{equation}
\end{proposition}

\begin{proposition}[Line to set]\label{prop line to set}
Let $V \in \noniso_{d,n}$. There are $N_5 = N_5 (d,n,\rad (V))$ and $p_5 = p_5 (d,n,\rad (V)) \in (0,1]$ such that
\begin{equation}\label{line to set}
\PP_{L_n} \big( \wt U_{N_5} = \wt V \big) \geq p_5.
\end{equation}
\end{proposition}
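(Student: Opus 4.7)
The approach is to exhibit, for a suitable representative $U^\ast$ of $\wt U$, an explicit finite sequence of configurations $L_n = W_0, W_1, \dots, W_{f_5} = U^\ast$ in which consecutive configurations differ by exactly one site and the HAT transition probability $\PP(W_i \to W_{i+1})$ is bounded below by a positive constant depending only on $d$, $n$, and $r$; the Markov property then yields~\eqref{line to set}. I would fix $U^\ast$ inside the ball of radius $r$ about the origin, and use the non-isolated assumption to choose an ordering $u_1, \dots, u_n$ of its sites in which $u_n$ is an exposed element of $U^\ast$ with a neighbor in $U^\ast$, so that $u_n$ is adjacent to $\{u_1, \dots, u_{n-1}\}$. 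The remaining sites can be placed in any order, because at each of the first $n-1$ substages the residual line will serve as ``scaffolding'' guaranteeing that the intended site has a neighbor in the current configuration.

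The sequence decomposes into a translation phase, in which iterated treadmill moves shift $L_n$ to a line $L$ positioned near $U^\ast$ and oriented along $-e_1$, and a construction phase of $n$ substages. Each treadmill move activates the current lex endpoint of the line---whose harmonic measure in the joint configuration is bounded below by a constant depending on $d$ and $n$ via Lemmas~\ref{lex hm} and~\ref{lem hm comp}---and transports the activated element, via a path circling the line supplied by Lemma~\ref{kesten cor}, to the opposite endpoint with positive conditional hitting probability (a standard estimate in $d \geq 5$). Thus $O_{d,n}(r)$ treadmill moves reposition the line with probability at least some constant raised to the power $O_{d,n}(r)$. In substage $i \leq n-1$, starting from $\{u_1, \dots, u_{i-1}\} \cup L^{(i)}$ with $L^{(i)}$ a line of $n-i+1$ elements, I would first treadmill $L^{(i)}$ so that one of its endpoints sits at $u_i + e_1$, then activate the opposite (lex) endpoint of $L^{(i)}$ and transport the activated element to $u_i$, which is a neighbor of the remaining set through $u_i + e_1 \in L^{(i)}$. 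The activation probability is bounded below by a constant depending on $d,n$ by Lemmas~\ref{lex hm} and~\ref{lem hm comp} (keeping $L^{(i)}$ at distance at least $4^d n$ from the placed elements ensures its lex endpoint has harmonic measure in the full configuration comparable to that in $L^{(i)}$ alone), and the transport probability is bounded below by $c(d,n,r)$ via a path argument in the style of Lemma~\ref{lex hm2}.

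The delicate point is substage $n$, when only a single scaffolding element remains and, after its activation, the remaining set is precisely $\{u_1, \dots, u_{n-1}\}$: for the transport to $u_n$ to have positive probability, $u_n$ must be a neighbor of $\{u_1, \dots, u_{n-1}\}$, which is exactly the property that the ordering was chosen to ensure. Accordingly the last scaffolding element is placed, during the preceding treadmill moves, close enough to $u_n$ that a random walk from it reaches $\{u_1, \dots, u_{n-1}\}$ first through a neighbor of $u_n$ with positive probability depending on $d$, $n$, and $r$. A secondary issue is that at each substage the lex endpoint of $L^{(i)}$ must remain the lex element of the entire configuration $\{u_1, \dots, u_{i-1}\} \cup L^{(i)}$, which I would arrange by always routing $L^{(i)}$ to the lexicographically-smaller side of the current partial target before activating. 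Summing the contributions of $O_{d,n}(r)$ HAT steps per substage then produces finite $f_5 = f_5(d,n,r)$ and a positive $g_5 = g_5(d,n,r)$, establishing~\eqref{line to set}.
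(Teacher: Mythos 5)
Your proposal captures the right general idea --- build $U$ one element at a time from a moveable ``scaffolding'' line, then account for the number of steps and per-step probabilities --- and this is indeed the spirit of the paper's argument. But there is a genuine gap at the heart of the construction: the claim that ``the remaining sites can be placed in any order, because at each of the first $n-1$ substages the residual line will serve as scaffolding guaranteeing that the intended site has a neighbor in the current configuration'' is false. Having a neighbor is necessary but not sufficient for a positive transport probability; as noted in Section~\ref{sec intro}, $p_{U_t}(x,y) > 0$ also requires that $y$ be \emph{exposed} (have positive harmonic measure) in the configuration $(U_t\setminus\{x\})\cup\{y\}$. Thus when placing $u_i$, one needs $u_i$ to be exposed in $\{u_1,\dots,u_i\}\cup L^{(i+1)}$, and this fails for a badly chosen order. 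For instance, if $U$ is a dense block in $\Z^d$ and $u_i$ is an interior element placed after all of its $2d$ lattice neighbors, then $u_i$ is surrounded and the transport step has probability zero, no matter how the scaffolding is positioned. Fixing this requires a careful choice of ordering (for instance a peeling argument that places each $u_i$ while it is still exposed in the partial configuration), together with an argument that the scaffolding line does not itself destroy exposure of $u_i$; your proposal supplies neither.

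The paper sidesteps this via an induction on $n$ (Proposition~\ref{induc arg}): one separates an element $x$ far away, forms $U\setminus\{x\}$ from $L_{n-1}$ using the induction hypothesis, and then transports the separated element back to $x$ at the end. Crucially, the induction hypothesis requires $U\setminus\{x\}\in\noniso_{d,n-1}$, and the paper's Case 2 handles the genuine possibility that no non-isolated exposed $x$ makes $U\setminus\{x\}$ non-isolated. That case requires a nontrivial workaround --- forming the modified set $U' = (U\setminus\{x,y\})\cup\{w-e_1\}$, treadmilling a pair through the hyperplane regions $O_1$ and $O_2$, and only then reconstituting $U$ --- which your direct, non-inductive construction has no analogue of. Without both the exposure-aware ordering and a mechanism to handle this degenerate case, the step-by-step construction you outline cannot be completed for all $U\in\wtnoniso_{d,n}$.
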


Theorem~\ref{irred} is a simple consequence of the preceding propositions. 

\begin{proof}[Proof of Theorem \ref{irred}]
Let $\wt V$ and $\wt W$ belong to $\wtnoniso_{d,n}$. By Propositions \ref{cor line} and \ref{prop line to set}, there are finite numbers of steps $N$ and $N'$, and positive probabilities $p$ and $p'$ such that
\[ 
\PP_{V} \big( \wt{U}_{N} = \wt{L}_n \big) \geq p \quad \text{and} \quad \PP_{L_n} \big( \wt{U}_{N'} = \wt{W} \big) \geq p'.
\]
By the Markov property at time $N$, the preceding bounds imply 
\[ 
\PP_V \big( \wt{U}_{N+N'} = \wt{W} \big) \geq p p' > 0,
\] 
which implies that HAT is irreducible on $\wtnoniso_{d,n}$.
\end{proof}

Next, we prove Proposition~\ref{cor line}.

\begin{proof}[Proof of Proposition~\ref{cor line}]
Let $a$ be the smallest integer that is larger than the constants denoted $r = r(d,n)$ in Lemma \ref{lex hm} and Lemma \ref{lex hm2}. By Theorem \ref{thm form dot}, there is a positive integer $M = M(d,n)$ and a positive number $q = q (d,n)$, such that there exists $\CC \in \Clust_{a+n,1}^\bullet (U_M)$.

Let $\ell$ be the lex element of $U_M$, which we assume w.l.o.g.\@ belongs to $\CC^1$. Because $\CC$ is $a+n$ separated, we can activate any lex element of any cluster with a probability of at least $q_2 = q_2 (d,n) > 0$ by Lemma \ref{lex hm}. Then, by Lemma \ref{lex hm2}, we can transport to $\ell-e_1$ with a probability of at least $q_3 = q_3 (d,n, \diam (V))$. Reassigning the element at $\ell-e_1$ to cluster $\CC^1$, the resulting clusters are at least $a + n-1$ separated.

Because the resulting clusters are still $a$ separated, we can simply repeat this process, transporting an element to $\ell-2e_1$, and so on. Continuing in this fashion for a total of $n$ steps results in $U_{M+n} = \ell + L_n$. The preceding discussion and the Markov property imply
\[ \PP_V \big( \wt U_{M+n} = \wt L_n \big) \geq q_1 (q_2 q_3)^n.\]
Setting $N_4 = M+n$ and $p_4 = q_1 (q_2 q_3)^n$ gives \eqref{eq cor line}.
\end{proof}

We will prove Proposition~\ref{prop line to set} with an argument by induction. To facilitate the induction step, it is convenient to prove the following, more detailed claim.

\begin{proposition}\label{induc arg}
Let $n \geq 2$ and let $V \in \noniso_{d,n}$. In terms of $r = [\, \rad (V)\,]$, there are positive integers $M = 4^d n^2 r$ and $\ell = M/n$, and a sequence $( (x_i,y_i) )_{i=1}^M$ in $\Z^d \times \Z^d$ 
such that, setting
\[
W_0 = L_n \quad \text{and} \quad W_j = (W_{j-1} {\setminus} \{x_j\}) \cup \{y_j\}, \quad 1 \leq j \leq M,
\]
the following conclusions hold:
\begin{enumerate}
\item[(i)] $W_M = V$.
\item[(ii)] For each $1 \leq j \leq M$, $x_j$ is exposed in $W_{j-1}$.
\item[(iii)] For each $1 \leq j \leq M$, there is a path $\Gamma_j$ from $x_j$ to $y_j$, which lies outside of $W_{j-1} {\setminus} \{x_j\}$ but inside of $B (r+dn)$, and which has a length of at most $\ell$.
\end{enumerate}
\end{proposition}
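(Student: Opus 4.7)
My plan is to proceed by induction on $n \geq 2$, peeling off one element of $U$ at each inductive step.

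\emph{Ordering of $U$.} I would first order the elements of $U$ as $u_1, u_2, \ldots, u_n$ so that $\{u_1, u_2\}$ is an exposed neighboring pair in $U$ (which exists by non-isolation), each $u_i$ with $i \geq 3$ is exposed in $\{u_1, \ldots, u_i\}$, and $U' := \{u_1, \ldots, u_{n-1}\}$ is itself non-isolated. This ordering is produced by iteratively peeling exposed elements from $U$, preserving at least one neighbor pair throughout. When $U$ has singleton components, I would ensure they are peeled first; equivalently, I would select $u_n$ from a connected component of $U$ of size at least two, which exists by non-isolation.

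\emph{Base case $n=2$.} Here $U$ is a dimer $\{u_1, u_2\}$ with $\|u_i\| \leq r$, and I would explicitly construct at most $2r + O(d)$ single-step moves: first reorient $L_2 = \{o, -e_1\}$ so that the inter-element vector matches $u_2 - u_1$ via $O(d)$ dimer-endpoint moves, then treadmill the reoriented dimer one unit at a time along an $\ell^1$-geodesic from $o$ to $u_1$. Each such move activates an exposed endpoint of a dimer, and each path $\Gamma_j$ has length $1$ and lies in $B(r + 2d)$. Padding with stationary moves $(x_j, y_j) = (x_j, x_j)$ brings the count to exactly $f = 4^d \cdot 4 \cdot r$.

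\emph{Inductive step.} Assuming the proposition for $n-1$, I would execute three phases. \textbf{(a) Reserve}: treadmill the line element at $-(n-1)e_1$ of $L_n$ to a staging site $p^{\ast}$ adjacent to $u_n$ but outside the ball $B(r + d(n-1))$, using $O(r+n)$ single-step moves; the reserved element remains exposed throughout since it is always a leading endpoint. \textbf{(b) Induct}: apply the inductive hypothesis to $U'$---non-isolated, of radius at most $r$---to obtain at most $4^d(n-1)^2 r$ moves carrying the remaining $L_{n-1}$ to $U'$; the frozen element at $p^{\ast}$ does not obstruct any of these moves because condition (iii) constrains all inductive paths to lie inside $B(r + d(n-1))$. \textbf{(c) Place $u_n$}: perform one concluding move activating $p^{\ast}$ and transporting to $u_n$. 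The required path $\Gamma$ from $p^{\ast}$ to $u_n$ avoiding $U'$ is constructed by splicing an infinite nearest-neighbor path from $u_n$ in $\Z^d \setminus U'$ (which exists because $u_n$ is exposed in $U$) with a path from $p^{\ast}$ to infinity in $\Z^d \setminus U'$; Lemma~\ref{kesten cor} applied to $U'$ bounds the length of this spliced path by $\sqrt{d}\,\diam(U') + 3^{d+1}(n-1) \leq 4^d n r = \ell$, and the path lies in $B(r + dn)$. Stationary moves then pad the total count to exactly $f = 4^d n^2 r$.

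\emph{Main obstacle.} The most delicate point is guaranteeing that Phase (b) remains valid with the frozen element at $p^{\ast}$ present. Since the inductive sequence is only abstractly specified, I would strengthen the proposition to allow arbitrary extraneous elements outside the working ball $B(r + d(n-1))$; condition (iii) makes these extraneous elements automatically disjoint from every inductive path, so the strengthened statement is preserved under induction. A secondary subtlety is the HAT-adjacency requirement that $u_n$ have a neighbor in $U' = W_{j-1} \setminus \{p^{\ast}\}$, so that a random walk from $p^{\ast}$ can indeed first hit $W_{j-1} \setminus \{p^{\ast}\}$ at $u_n$; this is sidestepped by the ordering step, which chooses $u_n$ from a connected component of $U$ of size at least two.
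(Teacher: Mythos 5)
Your inductive framework (peel one element, treadmill a reserve piece far away, apply the inductive hypothesis, then deliver the reserve to its final site) matches the paper's approach in what the paper calls Case~1. However, there is a genuine gap in the ordering step: you assume that one can always choose $u_n$ that is simultaneously exposed in $U$, non-isolated in $U$, and such that $U' = U \setminus \{u_n\}$ is non-isolated. This can fail. Take $U = \{o, e_1, s_1, \dots, s_{n-2}\}$ where the $s_k$ are pairwise far apart and far from $\{o, e_1\}$. The only non-isolated elements are $o$ and $e_1$, and removing either one leaves a set whose every exposed element is a neighborless singleton, i.e., an isolated set, so the inductive hypothesis does not apply. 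Selecting $u_n$ from among the $s_k$ instead violates your requirement that $u_n$ have a neighbor in $U'$, which you correctly note is needed for the final transport to $u_n$ to be possible. So in this regime (which the paper isolates as Case~2) your recursion cannot close.

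The paper handles Case~2 by a distinct construction: rather than applying the inductive hypothesis to $U \setminus \{x\}$, it applies it to a modified $(n-1)$-element non-isolated set $U' = (U \setminus \{x, y\}) \cup \{w - e_1\}$, where $w$ and $y$ are the lexicographic extremes of $U$ and $x$ is a non-isolated exposed element, and then treadmills a pair around the outside of $B(r)$ from near $w$ to near $y$ to produce $(U \setminus \{x\}) \cup \{y + e_d\}$ before finally transporting into $x$. Your proposal needs a comparable repair. On the other hand, your ``main obstacle'' discussion — the need to verify that the frozen reserve element outside $B(r + d(n-1))$ does not invalidate the inductive move sequence, which you address by strengthening the induction to tolerate extraneous elements outside the working ball — is a legitimate concern that the paper's proof glosses over, and your fix via conclusion~(iii) is the right one.
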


Before proving the proposition, let us explain how Proposition~\ref{prop line to set} follows from it. 

\begin{proof}[Proof of Proposition~\ref{prop line to set}]
By conclusion (iii) of Proposition~\ref{induc arg}, for each $1 \leq j \leq M$, we have $W_{j-1} \subseteq B(r + d n)$. By this observation and conclusion (ii), the activation component $\H_{W_{j-1}} (x_j)$ of each transition is at least a positive number $q_1 = q_1 (d,n,r)$. Again, by (iii), there is a path $\Gamma_j$, with a length of at most $\ell$, which can realize the transport step from $x_j$ to $y_j$. Consequently, in terms of $\tau = \tau_{W_{j-1} {\setminus} \{x_j\}}$, the transport component $\P_{x_j} ( S_{\tau - 1} = y_j \mid \tau < \infty)$ of each transition is at least $q_2 = (2d)^{-\ell-1}$. By the Markov property and conclusion (i), the probability in \eqref{line to set} is at least the product of these components, over $M$ steps:
\[
\PP_{L_n} \big( \wt{U}_M = \wt{V} \big) \geq (q_1 q_2)^M.
\]
\end{proof}

Lastly, we prove Proposition~\ref{induc arg}.

\begin{proof}[Proof of Proposition \ref{induc arg}]
The proof is by induction on $n$. The base case of $n=2$ is trivial because $\wtnoniso_{d,2}$ has the same elements as the equivalence class $\wt{L}_2$. Now suppose the claim holds up to $n-1$ for $n \geq 3$.

There are two cases, which we phrase in terms of the ``exposed'' boundary of $V$:
\[ \bdye V = \{x \in V: \H_V (x) > 0\}.\]
\begin{enumerate}
    \item There is a non-isolated $x \in \bdye V$ such that $V{\setminus}\{x\} \in \noniso_{d,n-1}$.
    \item For every non-isolated $x \in \bdye V$, $V {\setminus} \{x\} \in \iso_{d,n-1}$.
\end{enumerate}

 {\em Case 1}. Peform the following steps. In what follows, denote $r = [\rad (V)]+1$.
 \begin{itemize}
 \item[Step 1:] {\em ``Treadmill'' a pair of elements in the $-e_1$ direction for $m = r + dn - 2$ steps.} Specifically, activate the element $e_1$ and transport it to $-e_1$, then activate the element at the origin and transport it to $-2e_1$, followed by activation at $-e_1$ and transport to $-3e_1$, and so on.  

 \item[Step 2:] {\em Isolate an element outside of $B(r+ dn-2)$.} At the end of Step 1, an element lies at $-m e_1$ and another at $-(m-1)e_1$. Activate the latter and transport it to the $e_1$, then activate the element at $(n-1)e_1$ and transport it to the origin. 

 \item[Step 3:] {\em Use the induction hypothesis to form $(V{\setminus}\{x\}) \cup \{-m e_1\}$, for a particular $x$.} By the end of Step 2, the configuration is $L_{n-1} \cup \{-m e_1\}$. 
We use the induction hypothesis to form $V {\setminus} \{x\}$ from the $L_{n-1}$ subset, where $x$ is a non-isolated element of $\bdye V$ such that $V {\setminus} \{x\} \in \noniso_{d,n-1}$.

The use of the induction hypothesis guarantees that there is a sequence of $4^d(n-1)^2 r$ HAT steps from $L_{n-1}$, which: (i) result in $(V{\setminus}\{x\}) \cup \{-m e_1\}$; (ii) have positive activation components; and (iii) have transport steps that are realized by random walk paths with lengths of at most $4^d (n-1) r$, which remain inside $B(r+d(n-1))$.

 \item[Step 4:] {\em Transport the element at $-m e_1$ to $x$.} We activate the element at $-m e_1$ and transport it to $x$, which is possible because $x$ is non-isolated and exposed in $V$. Because $V{\setminus}\{x\}$ lies in $B(r)$, Lemma \ref{kesten cor} implies that there is a path from $-m e_1$ to $x$ which avoids $V{\setminus}\{x\}$, has a length of at most $\ell = 4^d n r$, and lies in $B(r+dn-1)$.
 \end{itemize}

Note that Steps 1 and 2 require $r+dn$ HAT steps, the activation components of which are positive and the transport components of which can be realized by paths of length at most $r+dn \leq \ell$. Steps 3 and 4 require $4^d(n-1)^2 r + 1$ HAT steps, again with positive activation components and transport components realized by paths of length at most $\ell$. In total, at most $M = 4^d n^2 r$ HAT steps are needed and, since all paths lie in $B(r+dn-1)$, conclusions (i) through (iii) hold.

{\em Case 2}. Because we cannot remove a non-isolated element of $V$ without obtaining an isolated set---a set to which the induction hypothesis does not apply---we must instead use the induction hypothesis to form a set related to $V$. In fact, the first two steps are the same as in Case 1, so we begin with the configuration $L_{n-1} \cup \{-m e_1\}$ and specify the third and subsequent steps.

\begin{itemize}
\item[Step 3':] {\em Use the induction hypothesis.} Let $w$ and $y$ be the least and greatest elements of $V$ in the lexicographic order, and let $x$ be any non-isolated element of $\bdye V$. We use the induction hypothesis to form
\[
V' = (V {\setminus} \{x,y\}) \cup \{w-e_1\},
\]
which is possible because $V' \in \wtnoniso_{d,n-1}$.

 The result is a sequence of $4^d(n-1)^2 r$ HAT steps from $L_{n-1}$ which form $V' \cup \{-m e_1\}$ with positive activation components and transport components which are realized by random walk paths with the same properties as in Step 3.

\item[Step 4':] {\em Activate the element at $-m e_1$ and transport it to $w-2e_1$.} 

\item[Step 5':] {\em Treadmill the pair $\{w-e_1,w-2e_1\}$.} Since $w$ is the least element of $V$ in the lexicographic order, $w-e_1$ and $w-2e_1$ are the only elements which lie in $O_1 = \{ z \in \Z^d: z \cdot e_1 \leq w \cdot e_1\}$. Similarly, due to the choice of $y$, it is the only element which lies in $O_2 = \{ z \in \Z^d: z \cdot e_d \geq y \cdot e_d\}$. 

Consequently, it is possible to treadmill the pair $\{w-e_1,w-2e_1\}$ to:
\begin{itemize}
\item $B(r+3)^\cc$ without leaving $O_1$; then
\item $O_2$ without leaving $B(r+6) {\setminus} B(r + 3)$; and
\item $\{y,y+e_d\}$ without leaving $O_2$.
\end{itemize}
This requires at most $f_2 = 10 r$ HAT steps, each of which has a positive activation component and a transport component realized by a random walk path of length five.

\item[Step 6':] {\em Activate at $\{y+e_d\}$ and transport to $x$.} The configuration at the end of Step 5' is $(V {\setminus} \{x\}) \cup \{y+e_d\}$, so activating the element at $\{y+e_d\}$ and transporting it to $x$ (which is possible because $x$ is an exposed, non-isolated element of $\bdye V$), forms $V$.  
\end{itemize}

Recall that Steps 1 and 2 require $r+dn$ HAT steps, which can be realized by paths of length at most $r+dn \leq \ell$. Steps 3' and 4' require $4^d(n-1)^2r + 1$ HAT steps, realized by paths of length at most $\ell$. Steps 5' and 6' require $10r+1$ HAT steps, with paths satisfying the same length bound. All activation components are positive. At most $M$ HAT steps are needed in total and, since all paths lie in $B(r+dn-1)$, conclusions (i) through (iii) hold.
\end{proof}

\section*{Acknowledgements}

I thank Shirshendu Ganguly and Alan Hammond for helpful discussions. In particular, I am grateful to Shirshendu Ganguly for suggesting Theorem \ref{thm form dot} to me and to Alan Hammond for sharing with me his prediction that dimers and trimers would drive the transience of HAT in higher dimensions. I thank the two anonymous referees for their careful reading of this paper and valuable feedback.


\end{document}